\documentclass[12pt, reqno]{amsart}
\usepackage[parfill]{parskip}    
\usepackage{graphicx}
\usepackage{amssymb}
\usepackage{amsmath,amsthm,amscd}
\usepackage{epstopdf}
\DeclareGraphicsRule{.tif}{png}{.png}{`convert #1 `dirname #1`/`basename #1 .tif`.png}
  
\usepackage{amsfonts}
\usepackage{latexsym}

\vfuzz2pt
\hfuzz2pt

\addtolength{\textwidth}{3cm}
\addtolength{\oddsidemargin}{-1.5cm}
\addtolength{\evensidemargin}{-1.5cm}

\newtheorem{lemma}{Lemma}[section]
\newtheorem{corollary}[lemma]{Corollary}
\newtheorem{theorem}[lemma]{Theorem}
\newtheorem{proposition}[lemma]{Proposition}
\newtheorem{remark}[lemma]{Remark}
\newtheorem{definition}[lemma]{Definition}

\newtheorem{question}[lemma]{Question}

\input xy
\xyoption{matrix} \xyoption{arrow} \xyoption{curve}
\setcounter{MaxMatrixCols}{16}

\begin{document}

\title[PI degree parity in $q$-skew polynomial rings]{PI Degree Parity in $q$-Skew Polynomial Rings}%
\author{Heidi Haynal}
\address{Department of Mathematics, University of California, Santa Barbara, California  93106}
\email{heidi@softerhardware.com}

\thanks{This research will form a part of the author's PhD dissertation at the University of California at Santa Barbara.}

\subjclass{16R99; 16S36; 81R50; 16P40}
\keywords{noncommutative rings; skew polynomial rings; quantum algebras}

\begin{abstract}
For $k$ a field of arbitrary characteristic, and $R$ a $k$-algebra, we show that the PI degree of an iterated skew polynomial ring $R[x_1;\tau_1,\delta_1]\dotsb[x_n;\tau_n,\delta_n]$ agrees with the PI degree of $R[x_1;\tau_1]\dotsb[x_n;\tau_n]$ when each $(\tau_i,\delta_i)$ satisfies a $q_i$-skew relation for $q_i \in k^{\times}$ and extends to a higher $q_i$-skew $\tau_i$-derivation.  We confirm the quantum Gel'fand-Kirillov conjecture for various quantized coordinate rings, and calculate their PI degrees.  We extend these results to completely prime factor algebras. 
\end{abstract}
\maketitle

\section{Introduction}

Presented here is a new technique for analyzing skew polynomial rings satisfying a polynomial identity with an eye toward discovering their PI degrees.  It combines and extends the methods of J\o ndrup \cite{jondrup} and Cauchon \cite{cau}, who introduced techniques of ``deleting derivations" in skew polynomial rings, by means of which they showed that some properties of certain types of iterated skew polynomial ring $A= k[x_1][x_2;\tau_2,\delta_2] \dotsb [x_n;\tau_n,\delta_n]$ are determined by the corresponding ring $A'= k[x_1][x_2;\tau_2] \dotsb [x_n;\tau_n]$.  J\o ndrup's results imply that $A$ and $A'$ have the same PI degree under certain hypotheses, including characteristic zero for the base field.  Cauchon developed an algorithm that gives an isomorphism between certain localizations of $A$ and $A'$, but this requires a $q_i$-skew condition on each $(\tau_i,\delta_i)$ with $q_i$ not a root of unity, which usually precludes $A$ from satisfying a polynomial identity.  We relax the restrictions placed on the base field and its chosen scalars by J\o ndrup and Cauchon, respectively, by introducing the notion of a {\em higher $q$-skew $\tau$-derivation}. 

If we ``twist" the multiplication in the (commutative) coordinate ring of affine, symplectic, or Euclidean $n$-space over a field $k$, we get a (noncommutative) quantized coordinate ring which has the structure of an iterated skew polynomial ring with coefficients in $k$.  This structure is also exhibited in the quantized Weyl algebras and in the quantized coordinate ring of $n \times n$ matrices over $k$.  Letting $A$ represent one of these $k$-algebras, the {\em quantum Gel'fand-Kirillov conjecture} asserts that ${\rm Fract}\, A$ is isomorphic to the quotient division ring of a quantum affine space over a purely transcendental extension of $k$. For more information on the quantum Gel'fand-Kirillov conjecture and proofs of conditions under which the result holds, see \cite{Alev} \cite{cliff} \cite{jordan} \cite{mospan} \cite{panskew} \cite{panov1}.  We will confirm some of these cases in a new way.

The first section sets up the conventions under which we work, including definitions and an established result concerning the PI degree of quantum affine space.  We assume that the reader has some familiarity with the subject, so we do not give an exhaustive collection of definitions.  A comprehensive discussion of any unfamiliar terms can be found in \cite{bluebook} \cite{barcelona} and \cite{brownbook}.  In the second section we define {\em higher $\tau$-derivations} and give necessary and sufficient conditions for their existence.  Of particular interest are higher $\tau$-derivations which satisfy a $q$-skew relation.  In the third section we present a structure theorem for a localization of $q$-skew polynomial rings.  This extends the work of Cauchon \cite{cau}, and the calculations are simplified by the presence of higher $q$-skew $\tau$-derivations.   In the fourth section we deal with the structure of iterated skew polynomial rings. Sometimes it is advantageous to rearrange the order in which the indeterminates appear, so we establish a sufficient condition that allows such reordering.  The main theorem there asserts that if $A$ is an iterated $q$-skew polynomial ring with certain higher $\tau$-derivations, then there is a finitely generated Ore set $T \subseteq A$ such that $AT^{-1}$ is isomorphic to a localization of a much ``nicer" iterated skew polynomial ring.  In the fifth section, we use the tools developed in the previous sections to confirm certain cases of the  quantum Gel'fand-Kirillov conjecture and to find the PI degree of some quantized coordinate rings and quantized Weyl algebras.  In the last section, we follow up with a structure theorem for completely prime factors of iterated skew polynomial rings.  We also present an open question which, if answered positively, would show that the quantum Gel'fand-Kirillov conjecture holds for certain of the prime factor algebras we study.

Throughout, $k$ will denote a field of arbitrary characteristic, $q \in k$ a nonzero element.  The following assumptions apply to all skew polynomial rings that we will consider:
\begin{itemize}
\item all coefficient rings are $k$-algebras
\item all automorphisms are $k$-algebra automorphisms
\item all skew derivations are $k$-linear
\item in all skew  polynomial rings $R[x; \tau, \delta]$, $\tau$ is an automorphism, not just an endomorphism.
\end{itemize}

To say that $R[x;\tau, \delta]$ is a {\em $q$-skew polynomial ring} means that the auomorphism and skew derivation satisfy the relation $\delta\tau=q\tau\delta$.   The reader will note that this is opposite to Cauchon's conventions, but it matches the presentation in \cite{primesprqwa} and others.   To say that $\delta$ is {\em locally nilpotent} means that for every $r\in R$ there is an integer $n_r \ge 0$ such that $\delta^{n_r}(r)=0$, and $\delta^p(r)\ne 0$ for $p<n_r$. Such $n_r$ is called the {\em $\delta$-nilpotence index of $r$}.  The symbol $\mathbb N$ refers to the set of positive integers.  
For a real number $m$ we use the notation $\lfloor m \rfloor$ in section five to indicate the integer part of $m$.

\begin{definition}
{\rm We say that two rings $R$ and $S$ exhibit} PI degree parity {\rm when these two conditions are satisfied:}
\begin{itemize}
\item[]{\rm (1) $R$ is a PI ring if and only if $S$ is a PI ring,}
\item[]{\rm (2) PIdeg$\,R$ = PIdeg$\,S$.}
\end{itemize}
\end{definition} 

For a field $k$ and multiplicatively antisymmetric $\boldsymbol \lambda \in M_n(k)$, the corresponding {\em multiparameter quantum affine space} is the $k$-algebra $\mathcal O_{\boldsymbol \lambda} (k^n)$ with generators $x_1, \dotsc, x_n$ and relations $x_i x_j = \lambda_{ij}x_jx_i$ for all $i,\,j$.  The corresponding {\em multiparameter quantum torus} is the $k$-algebra $\mathcal O_{\boldsymbol \lambda}((k^{\times})^n)$ given by generators $x_1^{\pm 1}, \dotsc, x_n^{\pm 1}$ and the same relations.  The multiplicative set generated by $x_1, \dotsc, x_n$ in $\mathcal O_{\boldsymbol \lambda} (k^n)$ is a denominator set, and  $\mathcal O_{\boldsymbol \lambda}((k^{\times})^n)$ is a localization of  $\mathcal O_{\boldsymbol \lambda}(k^n)$ with respect to this set.

In this paper we'll show that iterated skew polynomial algebras covering a large class of standard examples have PI degree parity with  $\mathcal O_{\boldsymbol \lambda}(k^n)$ for an appropriately chosen $\boldsymbol \lambda$.  To find out what that PI degree may be, we utilize a result of De Concini and Procesi.  In \cite[Proposition 7.1]{DP}, they establish the following formula for calculating the PI degree of a quantum affine space  $\mathcal O_{\boldsymbol \lambda}(k^n)$.  Their assumption of characteristic zero from \cite[Section 4]{DP} is not used in this result. 

\begin{theorem}\label{dp}
{\rm [De Concini - Procesi]} Let ${\boldsymbol {\lambda}} = (\lambda_{ij})$ be a multiplicatively antisymmetric $n \times n$ matrix over $k$.

{\rm (1)} The quantum affine space $\mathcal O_{\boldsymbol{\lambda}}(k^n)$ is a PI ring if and only if all the $\lambda_{ij}$ are roots of unity.  In this case, there exist a primitive root of unity $q \in k^{\times}$ and integers $a_{ij}$ such that $\lambda_{ij} = q^{a_{ij}}$ for all $i, j$.

{\rm (2)}  Suppose $\lambda_{ij} = q^{a_{ij}}$ for all $i, j$, where $q \in k$ is a primitive $\ell^{th}$ root of unity and the $a_{ij} \in \mathbb Z$. Let $h$ be the cardinality of the image of the homomorphism 
\begin{equation*}
\mathbb Z^n \xrightarrow {\quad (a_{ij}) \quad} \mathbb Z^n \xrightarrow {\quad \pi \quad} (\mathbb Z / \ell \mathbb Z)^n
\end{equation*}
where $\pi$ denotes the canonical epimorphism.  Then {\rm PI-deg}\,$(\mathcal O_{\boldsymbol{\lambda}}(k^n)) = \sqrt{h}$.
\end{theorem}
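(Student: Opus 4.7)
The plan is to reduce the PI-degree computation for $\mathcal O_{\boldsymbol\lambda}(k^n)$ to that of the quantum torus $\mathcal O_{\boldsymbol\lambda}((k^\times)^n)$, using the fact that Ore localization of a PI ring preserves both the PI condition and the PI degree. The torus is a crossed product of $k$ with $\mathbb Z^n$, which makes it well-suited to analysis via linear algebra over $\mathbb Z/\ell\mathbb Z$.

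For part (1), the ``if'' direction follows by taking $\ell$ to be a common multiple of the orders of the $\lambda_{ij}$; one then checks that $x_i^\ell x_j=\lambda_{ji}^\ell x_j x_i^\ell=x_j x_i^\ell$, so every $x_i^\ell$ is central. Hence $\mathcal O_{\boldsymbol\lambda}(k^n)$ is a finitely generated module over the commutative subring $k[x_1^\ell,\dots,x_n^\ell]$, and is therefore PI. For the converse, if some $\lambda_{ij}$ is not a root of unity, then the subalgebra generated by $x_i,x_j$ is a quantum plane that is not module-finite over any commutative subring, hence not PI; so neither is $\mathcal O_{\boldsymbol\lambda}(k^n)$. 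Finally, the finite subgroup of $k^\times$ generated by the $\lambda_{ij}$ is cyclic, so any generator $q$ is a primitive root of unity with $\lambda_{ij}=q^{a_{ij}}$ for suitable integers $a_{ij}$.

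For part (2), a direct computation shows that a monomial $x^a=x_1^{a_1}\cdots x_n^{a_n}$ is central in the torus if and only if $\sum_i a_i a_{ij}\equiv 0\pmod\ell$ for every $j$, equivalently $A^{\mathsf T}a\equiv 0\pmod\ell$ where $A=(a_{ij})$. Letting $L\subseteq\mathbb Z^n$ denote this lattice and $Z=k[L]$ the center of the torus, the torus becomes $Z$-free of rank $[\mathbb Z^n:L]$. By Smith normal form, $|\mathrm{im}(\overline{A^{\mathsf T}})|=|\mathrm{im}(\overline A)|=h$, hence $[\mathbb Z^n:L]=h$. To identify the PI degree, I would apply a unimodular change of basis of $\mathbb Z^n$ reducing $A$ to an antisymmetric block-diagonal form with $2\times 2$ blocks $\left(\begin{smallmatrix}0 & d_i\\-d_i & 0\end{smallmatrix}\right)$ and zero blocks. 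The induced monomial substitution $y_j=x^{v_j}$ is a $k$-algebra automorphism of the torus that presents it as a tensor product of two-variable quantum tori with relations $y_{2i-1}y_{2i}=q^{d_i} y_{2i} y_{2i-1}$ together with a commutative Laurent ring. A two-variable factor with parameter $q^d$ has PI degree $m:=\ell/\gcd(\ell,d)$ (being free of rank $m^2$ over its center, with an $m^2$-dimensional central simple fraction ring), while its block contributes exactly $m^2$ to $h$; the commutative factors contribute $1$ to both. Multiplicativity of the PI degree and of $h$ across tensor products then yields that the PI degree equals $\sqrt h$.

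The main obstacle is the diagonalization step, which requires the integral antisymmetric normal form via a unimodular transformation $U^{\mathsf T}AU$ and a verification that the induced monomial substitution is a well-defined bijective $k$-algebra homomorphism of the torus. A secondary difficulty is the converse half of (1), where one must exhibit a concrete obstruction to any polynomial identity in a quantum plane whose parameter is not a root of unity.
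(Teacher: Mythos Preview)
The paper does not give its own proof of this theorem: it is quoted as a result of De Concini and Procesi, with a citation to \cite[Proposition 7.1]{DP}, and is used as a black box in the later PI-degree computations. So there is no ``paper's own proof'' to compare against.

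That said, your sketch is essentially the standard argument and is sound. A few remarks. First, the integral antisymmetric diagonalization you need is exactly Newman's theorem, which the paper states later as Theorem~\ref{newman}; citing it disposes of your ``main obstacle'' immediately, and the monomial substitution $y_j = x^{Ue_j}$ for unimodular $U$ is a $k$-algebra automorphism of the torus because the commutation bicharacter $\beta(a,b)=q^{a^{\mathsf T}Ab}$ transforms to $q^{v^{\mathsf T}(U^{\mathsf T}AU)w}$. Second, your phrasing of the ``only if'' in part~(1) is slightly loose: ``not module-finite over any commutative subring'' is not by itself equivalent to ``not PI.'' The clean argument is that if $\lambda_{ij}$ has infinite order then the two-variable quantum torus $k\langle x_i^{\pm1},x_j^{\pm1}\rangle$ has center exactly $k$, so by Posner's theorem it cannot be PI, and hence neither can the quantum plane nor $\mathcal O_{\boldsymbol\lambda}(k^n)$. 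Third, the multiplicativity of PI degree across the tensor decomposition is safest argued by noting that each two-variable quantum torus at a primitive $m$th root of unity is Azumaya of rank $m^2$ over its center, so the full torus is Azumaya of rank $h$ over its center and hence has PI degree $\sqrt h$.
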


\section{Higher q-Skew $\tau$-Derivations}

Before the featured definition, a brief discussion of a tool used to study $q$-skew polynomial rings is needed.  Having the $q$-skew relation $\delta \tau= q\tau \delta$ in place allows us to group terms of the same degree when we do skew polynomial arithmetic. The means to do this are provided by the $q$-Liebnitz rules.

\begin{definition}\label{tbinomcoeff}
 {\rm For an indeterminate $t$, and integers $n\ge m \ge 0$, we define the following polynomial functions:
\begin{align}
 (m)_t& = t^{m-1} + t^{m-2} + \cdots + t + 1\\
 (m)!_t& =(m)_t (m-1)_t \cdots (1)_t, \text{ and  } (0)!_t = 1\\
  \binom{n}{m}_t&=  \frac{(n)!_t}{(m)!_t (n-m)!_t} 
\end{align}

The expressions $\binom{n}{m}_t$ are called the {\it $t$-binomial coefficients}, or {\em Gaussian polynomials}.  The $t$-binomial coefficients have properties similar to those of the regular binomial coefficients.  Two that will be useful for this work are:
\begin{equation}
\binom{n}{0}_t  = \binom{n}{n}_t = 1 \quad \text{for all  } n \ge 0
\end{equation}
\begin{equation}
\begin{split}
\binom{n}{m} _t  & =  \binom{n-1}{m}_t + t^{n-m} \binom{n-1}{m-1}_t \\
& =  \binom{n-1}{m-1}_t + t^m \binom{n-1}{m}_t \quad  \text{for all   } 0 < m < n
\end{split}
\end{equation}

Proofs for these identities may be found in combinatorics texts such as \cite{stanley}.  When we evaluate the $t$-binomial coefficients at $t=q$, we obtain the $q$-binomial coefficients that we need for studying $q$-skew polynomial rings.

As shown in \cite[Section 6]{primesprqwa}, the following $q$-Liebnitz rules hold for any $q$-skew polynomial ring $R[x;\tau, \delta]$:
\begin{align*}
\delta^n (rs) &= \sum \sb {i=0} \sp n \binom{n}{i} \sb q \tau^{n-i} \delta^i (r)\delta^{n-i} (s)  \text{ for all } r,\,s \in R \text{ and } n=0,1,2,...\\
x^n r &= \sum \sb {i=0} \sp n \binom{n}{i} \sb q \tau^{n-i} \delta^i (r)x^{n-i}  \text{ for all } r \in R \text{ and } n=0,1,2,...
\end{align*} }
\end{definition}

Now, taking a cue from the study of Schmidt differential operator rings, for instance \cite{malm}, we define a sequence of $k$-linear maps that allows us to broaden the class of rings for which we may derive results like those of J\o ndrup and Cauchon.

\begin{definition}\label{highder}
{\rm A {\em higher q-skew $\tau$-derivation} (h.$q$-s.$\tau$-d.) on a $k$-algebra $R$  is a sequence  $d_0, \, d_1, \, d_2,\,\dots$   of $k$-linear operators on $R$ such that
\begin{align*}
&d_0 \text{ is the identity}\\
& d_n(rs) = \sum_{i=0}^n \tau^{n-i}d_i(r)d_{n-i}(s) \, \, \text{for all } r, \, s\in R \text{ and all } n\\
&d_i\tau = q^i\tau d_i \text{ for all } i.
\end{align*} 
If a sequence of $k$-linear maps satisfies the first two conditions, we refer to it as a {\em higher $\tau$-derivation}.  We abbreviate the sequence $\{ d_i \}_{i=0}^\infty$ usually as just $\{ d_i \}$.
A h.$q$-s.$\tau$-d is {\em locally nilpotent} if for all $r \in R$, there exists an integer $n \ge 0$ such that $d_i(r)=0$ for all $i \ge n$, and $d_p(r) \ne 0$ for $p < n$.  In this case, $n$ is called the {\em d-nilpotence index of r}.  A h.$q$-s.$\tau$-d is {\em iterative} if $d_i d_j = \binom{i+j}{j}_q d_{i+j}$ for all $i, \, j$.  This implies that the $d_i$ commute with each other.   A $q$-skew $\tau$-derivation $\delta$ on $R$ {\em extends to a h.$q$-s.$\tau$-d.} if there is a h.$q$-s.$\tau$-d $\lbrace d_i \rbrace$ on $R$ with $d_1 = \delta$. }
\end{definition}

For example, consider the $k$-algebra with two generators $x$ and $y$, and one relation $xy - qyx = 1$, where $q \in k^{\times}$.  We'll assume that $q \ne 1$ and recognize this algebra as a $q$-skew polynomial ring $k[y][x;\tau, \delta]$ with $\tau (y) = qy$ and $\delta (y) = 1$, commonly known as a {\em quantized Weyl algebra} and denoted $A_1^q(k)$.  If $q$ is not a root of unity, then the maps
\begin{equation}\label{qwasequence}
d_i = \frac{\delta^i}{(i)!_q}
\end{equation}
comprise an iterative higher $q$-skew $\tau$-derivation that extends $\delta$ on $k[y]$.  The properties of a higher $q$-skew $\tau$-derivation follow directly from the fact that $\delta$ is a $q$-skew $\tau$-derivation and the first $q$-Liebnitz rule. This particular h.$q$-s.$\tau$-d. is also locally nilpotent because
\begin{equation}\label{qwaderivation}
d_i (y^n) = \begin{cases}
\binom{n}{i}_q y^{n-i} \quad \text{when } i \le n,\\ 
0 \qquad \quad \quad \text{when } i > n.
\end{cases}
\end{equation}

\begin{proposition}\label{powseriesmap}
Let $\{ d_i \}$ be a sequence of $k$-linear maps on a $k$-algebra $R$ with \linebreak[4]$d_0 = {\rm id_R}$, and let $R[[x;\tau^{-1}]]$ be the skew power series ring where $\tau$ is a $k$-linear automorphism of $R$, the coefficients are written on the right of the variable $x$, and $rx=x\tau(r)$ for all $r \in R$.  

{\rm (a)} Then $\{ d_i \}$ is a higher $\tau$-derivation on $R$ if and only if the map $\Psi : R \rightarrow R[[x;\tau^{-1}]]$ given by $r \mapsto \sum_{i=0}^{\infty} x^i d_i(r)$ is a ring homomorphism.

{\rm (b)} Extend $\tau$ to an automorphism of $R[[x; \tau^{-1}]]$ such that $\tau(x) = xq$.  Assume that $\{ d_i \}$ is a higher $\tau$-derivation.  Then the sequence $\{ d_i \}$ is a h.$q$-s.$\tau$-d. if and only if this diagram is commutative:

$$\xymatrix{
R[[x; \tau^{-1} ]] \ar[r]^{\tau} & R[[x; \tau^{-1} ]] \\
R \ar[u]^{\Psi} \ar[r]^{\tau} & R \ar[u]_{\Psi}
}$$

\end {proposition}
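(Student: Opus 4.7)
The plan is to reduce both parts to coefficient comparisons in the skew power series ring $R[[x;\tau^{-1}]]$, leveraging the commutation identity $rx = x\tau(r)$ and its iterate $rx^j = x^j\tau^j(r)$.

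For part (a), $k$-linearity and additivity of $\Psi$ are inherited immediately from the corresponding properties of each $d_i$, so the substance of the equivalence lies in multiplicativity (together with the unit axiom). The key computation is
\[
\Psi(r)\Psi(s) = \sum_{i,j \ge 0} x^i d_i(r) x^j d_j(s) = \sum_{n\ge 0} x^n \sum_{i=0}^n \tau^{n-i}(d_i(r))\,d_{n-i}(s),
\]
obtained by pushing each $x^j$ to the left of $d_i(r)$ via $d_i(r) x^j = x^j \tau^j(d_i(r))$ and re-indexing by $n = i + j$. Comparing the coefficient of $x^n$ on the right with $\Psi(rs) = \sum_n x^n d_n(rs)$ shows that $\Psi(rs) = \Psi(r)\Psi(s)$ for all $r,s \in R$ is equivalent to the higher $\tau$-derivation product rule on every $d_n$. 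The unit axiom $\Psi(1)=1$ is not a separate issue: starting from $d_0(1) = 1$ and applying the product rule to $1\cdot 1$, a short induction on $n$ forces $d_n(1) = 0$ for all $n\ge 1$, so this identity follows from the other conditions.

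For part (b), I assume $\{d_i\}$ is a higher $\tau$-derivation, so by (a) the map $\Psi$ is already a ring homomorphism. Because $q\in k$ is central in $R[[x;\tau^{-1}]]$ and $\tau(x) = xq$, one gets $\tau(x^i) = x^i q^i = q^i x^i$. Then
\[
\tau(\Psi(r)) = \sum_{i\ge 0} x^i \bigl(q^i \tau(d_i(r))\bigr), \qquad \Psi(\tau(r)) = \sum_{i\ge 0} x^i d_i(\tau(r)),
\]
and equating coefficients of $x^i$ yields $d_i\tau = q^i\tau d_i$ for every $i$, which is precisely the remaining h.$q$-s.$\tau$-d. axiom. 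Since each step is reversible, the diagram commutes if and only if the $q$-skew condition holds.

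I expect no serious obstacle. The main care point is the skew commutation bookkeeping, in particular the direction of the twist $\tau^j$ (rather than $\tau^{-j}$) forced by the stated convention $rx = x\tau(r)$, and the observation that the scalar $q$ is central in $R[[x;\tau^{-1}]]$ so that $\tau(x) = xq$ can equally be read as $qx$. Once these conventions are pinned down, both equivalences reduce to matching coefficients of $x^n$, with no further structural input required.
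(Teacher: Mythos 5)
Your proof is correct and follows essentially the same coefficient-comparison argument as the paper, pushing $x^j$ past $d_i(r)$ via the iterate of $rx = x\tau(r)$ and matching coefficients of $x^n$ in both directions. The one place you go beyond the paper is in deriving $d_n(1)=0$ for $n\ge 1$ (and hence $\Psi(1)=1$) by induction from the product rule, a detail the paper dismisses as clear.
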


\begin{proof}
(a) Suppose $\{d_i\}$ is a higher $\tau$-derivation on $R$.  Consider any $r, s \in R$.  It is clear that $\Psi$ is additive and $\Psi(1) = 1$.   Applying the definition \ref{highder} gives
\begin{equation*}
\Psi(rs) = \sum_{i=0}^{\infty} x^i d_i(rs) = \sum_{i=0}^{\infty} x^i \Big (\sum_{m=0}^i \tau^{i-m} d_m(r) d_{i-m}(s) \Big ).
\end{equation*}
Power series multiplication, with $rx = x\tau(r)$, gives
\begin{equation*}
\Psi(r)\Psi(s) = \Big ( \sum_{i=0}^{\infty} x^i d_i(r) \Big ) \Big ( \sum_{i=0}^{\infty} x^i d_i(s) \Big ) = \sum_{i=0}^{\infty} x^i \Big (\sum_{m=0}^i \tau^{i-m} d_m(r) d_{i-m}(s) \Big ).
\end{equation*}
So $\Psi$ preserves products.  Therefore, $\Psi$ is a ring homomorphism.

To demonstrate the other implication, suppose $\Psi$ is a ring homomorphism.  Then \linebreak[4] $\Psi(r) \Psi(s)= \Psi(rs)$ implies that $d_n(rs) = \sum_{i=0}^n \tau^{n-i} d_i(r) d_{n-i}(s)$ for all $r, s \in R$.  Therefore, $\{ d_i \}$ is a higher $\tau$-derivation.

(b) Suppose that $\{ d_i \}$ is a h.$q$-s.$\tau$-d.  Then the relations $d_i\tau = q^i \tau d_i$ imply that \linebreak[4]$\tau \Psi(r) = \sum_{i=0}^{\infty} x^i q^i \tau d_i(r) = \sum_{i=0}^{\infty} x^i  d_i (\tau(r)) = \Psi \tau (r)$, for all $r \in R$. 

Now if the diagram is commutative, then comparing the coefficients of\linebreak[4] $\tau \Psi(r) =   \sum_{i=0}^{\infty} x^i q^i \tau d_i(r)$ and  $\Psi \tau(r) = \sum_{i=0}^{\infty} x^i  d_i (\tau(r))$ for all $r \in R$ yields that \linebreak[4]$d_i\tau = q^i \tau d_i$.  \end{proof}

\begin{remark}
If $\{ d_i \}$ is locally nilpotent on $R$, we observe that claims analogous to the proposition can be made for the map $\Psi : R \rightarrow R[x;\tau^{-1}]$.
\end{remark}

\begin{proposition}\label{extendtoloc}
Let $\{ d_i \}$ be a h.$q$-s.$\tau$-d. on a $k$-algebra $R$, where $\tau$ is an automorphism, and let $S$ be a right denominator set in $R$ with $\tau(S) = S$.  Then $\{ d_i \}$ can be uniquely extended to a h.$q$-s.$\tau$-d. on $RS^{-1}$.
\end{proposition}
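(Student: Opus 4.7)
The plan is to exploit the correspondence of Proposition \ref{powseriesmap} between higher $\tau$-derivations on $R$ and ring homomorphisms $\Psi : R \to R[[x;\tau^{-1}]]$, and then push this data across the Ore localization $R \to RS^{-1}$ by the universal property. Since $\tau(S) = S$, the automorphism $\tau$ extends uniquely to an automorphism of $RS^{-1}$, which I still denote $\tau$; extend it further to $RS^{-1}[[x;\tau^{-1}]]$ by the rule $\tau(x)=xq$, as in Proposition \ref{powseriesmap}(b). Let $\Psi$ be the ring homomorphism attached to $\{d_i\}$, and compose with the natural inclusion $R[[x;\tau^{-1}]]\hookrightarrow RS^{-1}[[x;\tau^{-1}]]$ to obtain $\Psi' : R \to RS^{-1}[[x;\tau^{-1}]]$.

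For every $s \in S$, the series $\Psi'(s) = s + xd_1(s) + x^2 d_2(s) + \cdots$ has constant term $s$, a unit of $RS^{-1}$. A standard term-by-term inversion in a skew power series ring (solving $\Psi'(s)\bigl(\sum x^{j}b_{j}\bigr)=1$ recursively, using that each $\tau^{n}(s)$ is still a unit) shows that $\Psi'(s)$ is a unit of $RS^{-1}[[x;\tau^{-1}]]$. By the universal property of right Ore localization, $\Psi'$ extends uniquely to a ring homomorphism $\bar\Psi : RS^{-1} \to RS^{-1}[[x;\tau^{-1}]]$. Writing $\bar\Psi(a) = \sum_{i=0}^{\infty} x^{i}\bar d_{i}(a)$ for $a \in RS^{-1}$ defines $k$-linear maps $\bar d_{i} : RS^{-1} \to RS^{-1}$ with $\bar d_{0} = \mathrm{id}$ extending $d_{i}$, and Proposition \ref{powseriesmap}(a) applied to $\bar\Psi$ immediately yields that $\{\bar d_{i}\}$ is a higher $\tau$-derivation on $RS^{-1}$.

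To upgrade to a higher $q$-skew $\tau$-derivation I would invoke Proposition \ref{powseriesmap}(b) and check that $\tau\circ\bar\Psi = \bar\Psi\circ\tau$ on $RS^{-1}$. Both composites are ring homomorphisms $RS^{-1} \to RS^{-1}[[x;\tau^{-1}]]$ whose restrictions to $R$ are $\tau\Psi$ and $\Psi\tau$; these agree because $\{d_i\}$ was already assumed to satisfy the $q$-skew condition, so the uniqueness clause in the universal property forces the equality on all of $RS^{-1}$. Uniqueness of the extension $\{\bar d_{i}\}$ follows either from uniqueness of $\bar\Psi$, or more concretely by solving the recursion $d_{n}(a) = \sum_{i=0}^{n} \tau^{n-i}\bar d_{i}(as^{-1})\,d_{n-i}(s)$ for $\bar d_{n}(as^{-1})$ (the $i=n$ term isolates $\bar d_{n}(as^{-1})\cdot s$). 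The only real technical point is the observation that a skew power series with unit constant term is itself a unit; after that, everything is a formal consequence of Proposition \ref{powseriesmap} and the universal property of Ore localization.
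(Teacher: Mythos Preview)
Your argument is correct and essentially identical to the paper's own proof: both push the homomorphism $\Psi$ of Proposition~\ref{powseriesmap} into $RS^{-1}[[x;\tau^{-1}]]$, invert the image of each $s\in S$ using that its constant term is a unit, extend via the universal property of the localization, and then verify the $q$-skew condition by checking commutation with $\tau$ on $R$ and invoking uniqueness of the extension. The only cosmetic difference is that the paper establishes the uniqueness recursion first and existence second, whereas you do it in the reverse order.
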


\begin{proof}
It has been established that $\tau$ and $d_1$ extend uniquely to $RS^{-1}$ by \linebreak[4]$\tau(rs^{-1}) = \tau(r) \tau(s)^{-1}$ and $d_1(rs^{-1}) = d_1(r)s^{-1} - \tau(rs^{-1})d_1(s)s^{-1}$ in \cite[Lemma 1.3]{primesprqwa}.  Suppose that $\{ d_i \}$ extends to a h.$q$-s.$\tau$-d. on $RS^{-1}$.  For $r \in R$ and $s \in S$, we apply $d_n$ to the equation $r1^{-1} = (rs^{-1})(s1^{-1})$ to get
\begin{align*}
d_n(r)1^{-1} &= d_n \big ( (rs^{-1})(s1^{-1}) \big ) = \sum_{j=0}^n \tau^{n-j} d_j (rs^{-1}) d_{n-j} (s1^{-1})\\
&=\tau^n(rs^{-1})d_n(s)1^{-1} + \dotsb + d_n(rs^{-1})s1^{-1}.
\end{align*}
This implies that
\begin{equation*}
d_n(rs^{-1}) = \Big [ d_n(r) - \sum_{j=0}^{n-1} \tau^{n-j} d_j(rs^{-1}) d_{n-j}(s) \Big ] s^{-1}.
\end{equation*}
So we have uniqueness in case of existence.

To show existence, let $\Psi : R \rightarrow R[[x; \tau^{-1} ]]$ be the map defined in Proposition \ref{powseriesmap}, and let $\phi : R[[x; \tau^{-1} ]] \rightarrow RS^{-1}[[x; \tau^{-1} ]]$ be the natural map.  Consider the composite map $\Phi = \phi \Psi: R \rightarrow RS^{-1}[[x; \tau^{-1} ]]$.  For any $s \in S$, the constant term of $\Phi(s)$ is a unit.  So we may inductively solve for the coefficients of an inverse for $\Phi(s)$ in $RS^{-1}[[x; \tau^{-1} ]]$.  Details, as in \cite[1.2]{rowen}, are left to the reader.  Hence, $\Phi$ extends to a ring homomorphism $\Phi' : RS^{-1} \rightarrow RS^{-1}[[x; \tau^{-1} ]]$ such that $\Phi' (rs^{-1}) = \Phi(r) \Phi(s)^{-1}$, and we consider the diagram:

$$\xymatrix{
RS^{-1}[[x; \tau^{-1} ]] \ar[r]^{\tau} & RS^{-1}[[x; \tau^{-1} ]] \\
RS^{-1} \ar[u]^{\Phi'} \ar[r]^{\tau} & RS^{-1} \ar[u]_{\Phi'}
}$$

where $\tau$ has been extended to an automorphism of $RS^{-1}[[x; \tau^{-1} ]]$ as in  Proposition \ref{powseriesmap}.

Since $\Phi(r) = \sum_{i=0}^{\infty} x^i d_i (r) 1^{-1}$, and $\{ d_i \}$ is a h.$q$-s.$\tau$-d. on $R$, we have
\begin{equation*}
\tau \Phi(r) = \sum_{i=0}^{\infty} x^i q^i \tau d_i (r) 1^{-1} = \sum_{i=0}^{\infty} x^i d_i \big ( \tau (r) \big ) 1^{-1} = \Phi \tau(r)
\end{equation*}
for all $r \in R$.  It follows directly that $\tau \Phi'(rs^{-1}) = \Phi' \tau(rs^{-1})$.  So, indeed, the diagram is commutative. 

Define a sequence $\{ d_i \}$ on $RS^{-1}$ such that $d_i(t)$ equals the coefficient of $x^i$ in $\Phi'(t)$ for all $t \in RS^{-1}$.  Then by Proposition \ref{powseriesmap} we conclude that this sequence  is a h.$q$-s.$\tau$-d. on $RS^{-1}$ extending $\{ d_i \}$ on $R$.
\end{proof}

\begin{lemma}\label{d-stablesubalg}
Let $A$ be a $k$-algebra, $B \subseteq A$ a $k$-subalgebra generated by $\lbrace b_1, b_2, \dotsc \rbrace$,  $\tau$ a $k$-linear automorphism of $A$,  and $\lbrace d_i \rbrace$ a higher $\tau$-derivation on $A$.  If $d_i(b_j) \in B$ and $\tau(b_j) \in B$, for all $i, j \in \mathbb N$, then $d_i(B) \subseteq B$ for all $i$.
\end{lemma}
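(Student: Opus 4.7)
The plan is to reduce the conclusion, by $k$-linearity of each $d_i$, to checking that $d_i$ carries every monomial $w = b_{j_1}b_{j_2}\cdots b_{j_\ell}$ (including the empty product $1$) into $B$, and then to induct on the length $\ell$ using the Leibniz-type identity in Definition \ref{highder}. The $\ell=1$ base case is exactly the hypothesis $d_i(b_j)\in B$; the $\ell=0$ case reduces to $d_0(1)=1\in B$ together with $d_n(1)=0$ for $n\ge 1$, which follows by a short strong induction from applying the Leibniz identity to $1\cdot 1 = 1$ (this yields $d_n(1) = 2\,d_n(1)$, valid in arbitrary characteristic).

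Before starting the main induction I would record one preliminary, namely that $\tau(B)\subseteq B$. Since $\tau$ is a $k$-algebra homomorphism on $A$ and $\tau(b_j)\in B$ for every generator, $\tau$ sends each product of $b_j$'s to a product of elements of $B$; extending $k$-linearly gives $\tau(B)\subseteq B$. This preliminary must precede the main induction, because the Leibniz expansion produces factors of the form $\tau^{i-m}(d_m(b_{j_1}))$, and without $\tau$-stability of $B$ these terms could escape the subalgebra.

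For the inductive step, write $w = b_{j_1}w'$ with $w'$ a monomial of length $\ell - 1$, and apply Definition \ref{highder}:
\begin{equation*}
d_i(w) \;=\; \sum_{m=0}^{i} \tau^{i-m}\!\bigl(d_m(b_{j_1})\bigr)\, d_{i-m}(w').
\end{equation*}
In each summand, $d_m(b_{j_1})\in B$ by hypothesis, $d_{i-m}(w')\in B$ by the inductive hypothesis on $\ell$, their product lies in $B$, and $\tau^{i-m}$ keeps it in $B$ by the preliminary. Summing gives $d_i(w)\in B$, closing the induction. There is no deep obstacle here; the step requiring the most attention, and hence the one I would want to get right first, is the $\tau$-stability of $B$, since the higher $\tau$-derivation axiom intertwines the $d_i$'s with powers of $\tau$. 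Once both stabilities are in place, the induction on monomial length is mechanical.
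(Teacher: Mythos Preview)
Your proof is correct and follows essentially the same route as the paper: reduce by $k$-linearity to monomials in the generators, first record that $\tau(B)\subseteq B$, then induct on monomial length using the product rule from Definition~\ref{highder}. The only cosmetic differences are that the paper peels off the last factor of the monomial rather than the first, and that you handle the length-zero case $d_n(1)=0$ explicitly while the paper leaves it implicit.
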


\begin{proof}First, observe that $\tau(b_j) \in B$ for all $j$ implies that $\tau(B) \subseteq B$. Since the $d_i$ are $k$-linear maps, it suffices to check monomials in the $b_j$, using induction on their length.  
Suppose, inductively, that for integers $m \ge 1$ and $1 \le \ell \le m-1$, we have $d_i(b_{j_1} \dotsb b_{j_\ell}) \in B$ for all $i$ and all $j_1, \dotsc, j_{\ell}$.  Then using the product rule for h.$q$-s.$\tau$-d. gives

\begin{equation*}
d_n(b_{j_1} \dotsb b_{j_m}) = \sum_{i=0}^n \tau^{n-1}d_i(b_{j_1} \dotsb b_{j_{m-1} })d_{n-i}(b_{j_m}) \in B
\end{equation*}
for all $n$ and all $j_1, \dotsc, j_m$, by the induction hypothesis.
\end{proof}

\begin{lemma}\label{extendlocnilp}
Let $A$ be a $k$-algebra with a set $\{x_j \}$ of generators,  $\tau$ an automorphism of $A$, and $\{ d_i \}$ a h.$q$-s.$\tau$-d. on $A$.  If $\{d_i \}$ is locally nilpotent for all $x_j$, then $\{d_i \}$ is locally nilpotent on $A$.
\end{lemma}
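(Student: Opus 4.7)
The plan is to leverage the ring homomorphism $\Psi \colon A \to A[[x;\tau^{-1}]]$ from Proposition \ref{powseriesmap}, whose coefficients encode the values of the $d_i$. Local nilpotence of $\{d_i\}$ at $r$ is exactly the statement that $\Psi(r)$ has only finitely many nonzero coefficients, i.e.\ $\Psi(r)$ lies in the skew polynomial subring $A[x;\tau^{-1}] \subseteq A[[x;\tau^{-1}]]$. So the set
\begin{equation*}
B \;=\; \{\,r \in A : d_i(r) = 0 \text{ for all but finitely many } i\,\} \;=\; \Psi^{-1}\bigl(A[x;\tau^{-1}]\bigr)
\end{equation*}
is exactly the locus where the sequence is locally nilpotent, and the lemma asserts $B = A$.

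First I would record that $A[x;\tau^{-1}]$ is genuinely a $k$-subalgebra of $A[[x;\tau^{-1}]]$: for coefficients $a_i, b_j \in A$ the product $(x^i a_i)(x^j b_j) = x^{i+j}\tau^j(a_i) b_j$ remains a polynomial whenever both factors are, so polynomial skew power series form a subring, and they clearly form a $k$-subspace. Next, because $\{d_i\}$ is in particular a higher $\tau$-derivation, Proposition \ref{powseriesmap}(a) tells us that $\Psi$ is a ring homomorphism; it is also $k$-linear since each $d_i$ is $k$-linear. The preimage of a $k$-subalgebra under a $k$-algebra map is a $k$-subalgebra, so $B$ is a $k$-subalgebra of $A$.

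The hypothesis says $d_i(x_j) = 0$ for $i$ sufficiently large (depending on $j$), i.e.\ $\Psi(x_j) \in A[x;\tau^{-1}]$, so every generator $x_j$ lies in $B$. Since $B$ is a $k$-subalgebra containing a generating set of $A$, we conclude $B = A$, which is exactly the assertion that $\{d_i\}$ is locally nilpotent on all of $A$.

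No serious obstacle arises: the only step that requires any care is checking that polynomials form a subring of the skew power series ring under the convention $rx = x\tau(r)$, which is routine. The real content is recognising that local nilpotence translates into membership in a subring, converting the problem into the trivial fact that a ring homomorphism pulls subrings back to subrings.
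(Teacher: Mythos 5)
Your proof is correct, and it takes a genuinely different (and arguably cleaner) route than the paper's. The paper proves the lemma by induction on the length of monomials in the generators: for a monomial $x_{j_1}\dotsb x_{j_n}$, one applies the product rule $d_p(x_{j_1}\dotsb x_{j_n}) = \sum_{i=0}^p \tau^{p-i}d_i(x_{j_1}\dotsb x_{j_{n-1}})d_{p-i}(x_{j_n})$ and observes that every term vanishes once $p$ exceeds the sum of the nilpotence indices of the two factors, after which $k$-linearity finishes the argument. You instead recast local nilpotence at $r$ as the condition $\Psi(r)\in A[x;\tau^{-1}]$, note (using Proposition \ref{powseriesmap}(a)) that $\Psi$ is a $k$-algebra map and that $A[x;\tau^{-1}]$ is a $k$-subalgebra of $A[[x;\tau^{-1}]]$, so $B=\Psi^{-1}(A[x;\tau^{-1}])$ is a $k$-subalgebra of $A$ containing all the generators, hence all of $A$. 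The underlying computation is the same --- the closure of $B$ under multiplication unwinds to exactly the product-rule estimate the paper carries out --- but you have hidden the induction inside the already-established fact that $\Psi$ is a ring homomorphism, converting the problem into the formal statement that preimages of subrings under ring maps are subrings. This is tidier and more conceptual; the paper's version is more elementary and avoids invoking the power series machinery. (The paper does gesture toward your picture in the remark following Proposition \ref{powseriesmap}, which observes that when $\{d_i\}$ is locally nilpotent $\Psi$ lands in $R[x;\tau^{-1}]$; you are essentially running that remark in reverse as a criterion.)
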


\begin{proof}It suffices to check monomials in the $x_j$ because the $d_i$ are $k$-linear maps.  We proceed by using induction on the length of such monomials.  For a given $x_n$, let $i(n)$ be its nilpotence index, so $d_i (x_n) = 0$ for all $i \ge i(n)$.  

Suppose inductively that for $n \ge 2$, all integers $\ell$ with $1 \le \ell \le {n-1}$, and all choices of $j_1, \dotsc , j_{\ell}$, there exists an integer $m$ such that $d_i(x_{j_1} \dotsb x_{j_l}) = 0$ for all $i \ge m$. For instance, $m = i(j_1) + \dotsb + i(j_{\ell})$ will suffice, although the $d$-nilpotence index of $x_{j_1} \dotsb x_{j_{\ell}}$ may be less than this sum.   Then, for $p \ge m + i(j_n)$, we have 
\begin{equation*}
d_p(x_{j_1} \dotsb x_{j_n}) = \sum_{i=0}^p \tau^{p-i} d_i (x_{j_1} \dotsb x_{j_{n-1}}) d_{p-i}(x_{j_n}) = 0,
\end{equation*}
completing the induction.
\end{proof}

Consider again the quantized Weyl algebra $A_1^q(k)$.  In case $q$ is an $\ell$-th root of unity, the $d_{\ell}$ given in (\ref{qwaderivation}) would be undefined due to the occurrence of a zero denominator. However, realizing $A_1^q(k)$ as a factor of a quantized Weyl algebra over $k[t^{\pm 1}]$ allows us to define a h.$q$-s.$\tau$-d. on $A_1^q(k)$ nonetheless.  The $k[t^{\pm 1}]$-algebra $A_1^t(k[t^{\pm 1}])$ has generators $x$ and $y$ and one relation $xy - tyx = 1$.  This is a $t$-skew polynomial ring $k[t^{\pm 1}][y][x; \bar \tau, \bar \delta ]$ where $\bar \tau (y) = ty$, $\bar \tau (t) = t$, $\bar \delta (y) = 1$, and $\bar \delta (t) = 0$.  Note that 
\begin{equation*}
\bar \delta^i (y^n) = \begin{cases}
\frac{(n)!_t}{(n-i)!_t} y^{n-i} \quad \text{when } i \le n\\
0 \qquad \qquad \quad \text{when } i >n
\end{cases} 
\end{equation*}
implying that $\bar \delta^i \big ( k[t^{\pm 1}][y] \big ) \subseteq (i)!_t k[t^{\pm 1}][y]$.  So the assignment 
\begin{equation*} 
\bar d_i = \frac{\bar \delta^i}{(i)!_t}
\end{equation*}
defines an iterative, locally nilpotent h.$t$-s.$\bar \tau$-d. $\{ \bar d_i \}$ on $k[t^{\pm 1}][y]$.   Now, the relation $xy - tyx = 1$ is equivalent to the relation $xy - qyx = 1$ modulo $\langle t - q \rangle$.  Hence we have 
\begin{equation*}
A_1^t \big (k[t^{\pm 1}] \big ) / \langle t - q \rangle \cong A_1^q(k).
\end{equation*}
When $q$ is an $\ell$th root of unity, we have $\bar \delta^{\ell} \big (k[t^{\pm 1}][y] \big ) \subseteq \langle t - q \rangle k[t^{\pm 1}][y]$.  Nonetheless, the h.$t$-s.$\bar \tau$-d. $\{ \bar d_i \}$ on $k[t^{\pm 1}][y]$ induces a h.$q$-s.$\tau$-d. $\{ d_i \}$ on $k[y]$, also iterative and locally nilpotent, with $d_1 = \delta$.  Note that even though $\delta^{\ell}=0$ in this algebra, we have $d_i (y^i) = 1$ for all $i$.

This phenomenon is not unique to the quantized Weyl algebras.  The conditions that drive it are codified in the following theorem.

\begin{theorem}\label{specialize}
Let R be a $k$-algebra and $R[x;\tau, \delta]$ a $q$-skew polynomial ring where $q \in k$, $q \ne 1$.   Suppose there exists a torsion-free $k[t^{\pm 1}]$-algebra $\overline R$ and $\overline R[x;\bar \tau, \bar \delta]$ a $t$-skew polynomial ring such that $\overline R / \langle t-q \rangle \overline R \cong R$, with $\bar \tau$ and $\bar \delta$ reducing to $\tau$ and $\delta$.  Suppose further that $\bar \delta^i (\overline R) \subseteq (i)!_t \overline R$ for all $i$.  Then $\delta$ extends to an iterative h.$q$-s.$\tau$-d. $\{ d_i \}$ on $R$.  If $\bar \delta$ is locally nilpotent, then so is $\{ d_i \}$.  If $q$ is not a root of unity, then $d_i = \frac{\delta^i}{(i)!_q}$ for all $i$.  If $q$ is a primitive $\ell^{th}$ root of unity, then $d_i = \frac{\delta^i}{(i)!_q}$ for $i < \ell$.
\end{theorem}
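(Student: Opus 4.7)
The plan is to lift the desired higher $q$-skew $\tau$-derivation up to $\overline R$ by formally setting $\bar d_i = \bar \delta^i/(i)!_t$, verify the iterative higher $t$-skew $\bar \tau$-derivation axioms there, and then descend to $R$ by reducing modulo $\langle t-q\rangle$. The formal division is legitimate: each $(i)!_t$ is nonzero in the domain $k[t^{\pm 1}]$, so multiplication by $(i)!_t$ is injective on the torsion-free module $\overline R$; combined with the hypothesis $\bar \delta^i(\overline R) \subseteq (i)!_t \overline R$, this lets me define $k$-linear operators $\bar d_i : \overline R \to \overline R$ unambiguously by the rule $(i)!_t \bar d_i(r) = \bar \delta^i(r)$, with $\bar d_0 = \mathrm{id}$.

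Next I would verify that $\{\bar d_i\}$ is an iterative higher $t$-skew $\bar \tau$-derivation on $\overline R$. For the product rule, divide the first $t$-Leibniz identity $\bar \delta^n(rs) = \sum_{i=0}^n \binom{n}{i}_t \bar \tau^{n-i} \bar \delta^i(r) \bar \delta^{n-i}(s)$ through by $(n)!_t$ and use $\binom{n}{i}_t/(n)!_t = 1/((i)!_t (n-i)!_t)$ to obtain $\bar d_n(rs) = \sum_{i=0}^n \bar \tau^{n-i} \bar d_i(r) \bar d_{n-i}(s)$. For the $t$-skew relation, iterate $\bar \delta \bar \tau = t \bar \tau \bar \delta$ to $\bar \delta^i \bar \tau = t^i \bar \tau \bar \delta^i$, then divide by $(i)!_t$ (which commutes with $\bar \tau$ because $\bar \tau$ fixes $t$). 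Iterativity follows from $\bar d_i \bar d_j = \bar \delta^{i+j}/((i)!_t (j)!_t) = \binom{i+j}{j}_t \bar d_{i+j}$, where $k[t^{\pm 1}]$-linearity of $\bar \delta$ (a consequence of $\bar \delta(t)=0$) lets me pass $(j)!_t$ past $\bar \delta^i$.

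Then I would descend. For each $i \ge 1$, $\bar d_i(t)=0$ by construction and $\bar d_i(q)=0$ by $k$-linearity, so the product rule yields $\bar d_i((t-q)r) = \bar \tau^i(t-q) \bar d_i(r) = (t-q) \bar d_i(r)$; hence each $\bar d_i$ preserves $\langle t-q \rangle \overline R$ and induces a $k$-linear map $d_i$ on $R$ with $d_0 = \mathrm{id}$ and $d_1 = \delta$. Reducing each h.$t$-s.$\bar \tau$-d.\ identity modulo $\langle t-q \rangle$ replaces $t$ by $q$ throughout and produces the corresponding h.$q$-s.$\tau$-d.\ and iterativity identities for $\{d_i\}$ on $R$. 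Local nilpotence of $\bar \delta$ forces local nilpotence of $\{\bar d_i\}$ (the defining relation with $\bar \delta^i(r)=0$ and $(i)!_t$ injective gives $\bar d_i(r)=0$), which descends to $\{d_i\}$ via lifts using Lemma \ref{extendlocnilp}. The closed formula $d_i = \delta^i/(i)!_q$ is obtained by reducing $(i)!_t \bar d_i = \bar \delta^i$, valid precisely when $(i)!_q \ne 0$, that is, for all $i$ when $q$ is not a root of unity and for $i < \ell$ when $q$ is a primitive $\ell$th root of unity.

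The main technical point I expect to watch carefully is the compatibility of the formal division by $(i)!_t$ with the operators $\bar \tau, \bar \delta$ and with the ideal $\langle t-q \rangle$. This hinges on viewing $\overline R$ as a genuine $k[t^{\pm 1}]$-algebra with $\bar \tau(t)=t$ and $\bar \delta(t)=0$ (implicit in the requirement that $\bar \tau, \bar \delta$ descend to $\tau, \delta$ modulo $\langle t-q \rangle$), so that the scalars $(i)!_t$ are central and commute cleanly with $\bar \tau$ and $\bar \delta$. Once this is recognized, all the structural identities reduce to routine manipulations of the $t$-Leibniz rules and their specializations at $t=q$.
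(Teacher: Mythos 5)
Your proposal is correct and follows essentially the same route as the paper: define $\bar d_i = \bar\delta^i/(i)!_t$ on $\overline R$ (well-defined by torsion-freeness and the hypothesis $\bar\delta^i(\overline R)\subseteq (i)!_t\overline R$), verify that $\{\bar d_i\}$ is an iterative h.$t$-s.$\bar\tau$-d.\ there, and descend modulo $\langle t-q\rangle$. The paper's proof is terser and leaves the verifications implicit, but it is the same lift-verify-reduce argument.
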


\begin{proof}
The assumption $\bar \delta^i (\overline R) \subseteq (i)!_t \overline R$ for all $i$ implies that the sequence of maps \linebreak[4]$\bar d_i = \frac{\bar \delta^i}{(i)!_t}$ make up a well-defined iterative h.$t$-s.$\bar \tau$-d. on $\overline R$, and also implies that $\bar \delta^{\ell} (\overline R) \subseteq \langle t-q \rangle \overline R$ because 
$(\ell)_t \equiv (\ell)_q = 0$ modulo $\langle t-q \rangle$. Since $\bar \tau$ and $\bar \delta$ reduce to $\tau$ and $\delta$ modulo $\langle t-q \rangle$, we have an isomorphism $\overline R/ \langle t-q \rangle [x; \bar \tau, \bar \delta]  \cong R[x;\tau, \delta]$ whereby $\{ \bar d_i \}$ induces an iterative h.$q$-s.$\tau$-d. $\{ d_i \}$ on $R$.  The reduction of the maps from $\overline R$ to $R$ also implies the remaining results. 
\end{proof}

We will find that all of the conditions assumed above are satisfied by the common quantized coordinate rings and related examples, which will be discussed in a subsequent section.

\section{The $\tau$-Derivation Removing Homomorphism}
Following the pattern in \cite{cau}, let $A=R[x;\tau,\delta]$, and suppose that $\delta$ is locally nilpotent.    Set $S=\lbrace x^n \mid n\in \mathbb N \cup \{0\} \rbrace \subset A$.

\begin{lemma}\label{denomset} 
The set $S$ is a denominator set in $A$.
\end{lemma}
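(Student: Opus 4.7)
The plan is to verify the denominator set conditions. Since $\tau$ is an automorphism, the usual leading-coefficient argument in a skew polynomial ring shows that $x$ (and hence each $x^n$) is regular in $A$, so the reversibility/torsion condition is automatic. What remains is to establish the left and right Ore conditions.

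The left Ore condition is essentially immediate from the $q$-Leibniz rule. For $r \in R$ whose $\delta$-nilpotence index is $n$, the $i = n$ term in the expansion
\begin{equation*}
x^n r \;=\; \sum_{i=0}^n \binom{n}{i}_q \tau^{n-i}\delta^i(r)\,x^{n-i}
\end{equation*}
vanishes, and what remains factors on the right as $b \cdot x$ for an explicit $b \in A$. Decomposing a general element $a \in A$ as a polynomial $\sum r_i x^i$ and iterating in the exponent of $x$ then yields the left Ore condition for all $a \in A$ and all $s = x^N$.

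The right Ore condition is the main content. The key step, to be proved by induction on $n$, is: if $r \in R$ satisfies $\delta^n(r) = 0$, then $rx^n \in xA$. The inductive engine is the commutation identity $rx = x\tau^{-1}(r) - \delta\tau^{-1}(r)$, together with the $q$-skew identity $\delta\tau^{-1} = q^{-1}\tau^{-1}\delta$; this implies that $\delta^{n-1}\bigl(\delta\tau^{-1}(r)\bigr) = q^{-n}\tau^{-1}\delta^n(r) = 0$, so $\delta\tau^{-1}(r) \in \ker\delta^{n-1}$. Hence
\begin{equation*}
rx^n \;=\; x\bigl(\tau^{-1}(r)\,x^{n-1}\bigr) - \bigl(\delta\tau^{-1}(r)\bigr)\,x^{n-1}
\end{equation*}
lies in $xA$, the first summand trivially and the second by the induction hypothesis applied to $\delta\tau^{-1}(r)$.

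Once this claim is established, for a general $a = \sum r_i x^i \in A$ take $M$ to be at least the maximum of the $\delta$-nilpotence indices of the $r_i$; then each $r_i x^{i+M} = (r_i x^{n_{r_i}})\,x^{i+M-n_{r_i}}$ lies in $xA$, so $ax^M \in xA$. For $s = x^N$ with $N \geq 2$, iterate: once $ax^{M_1} = xb_1$ is produced, apply the same argument to $b_1$ to obtain $b_1 x^{M_2} = xb_2$, and so on, giving $ax^{M_1+\cdots+M_N} = x^N b_N$. The main obstacle is precisely the inductive step above: it is the $q$-skew relation that makes the filtration of $R$ by $\delta$-nilpotence index stable under $\tau^{\pm 1}$, which is what allows the induction to close.
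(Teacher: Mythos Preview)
Your proof is correct. The left Ore argument is essentially the paper's. For the right Ore condition the paper takes a shorter route: it observes that $A^{\rm op} = R^{\rm op}[x;\tau^{-1},-\delta\tau^{-1}]$ is again a skew polynomial ring of the same type, and that $-\delta\tau^{-1}$ is locally nilpotent (here the $q$-skew relation enters, since $(-\delta\tau^{-1})^n$ is a scalar multiple of $\delta^n\tau^{-n}$), so the left Ore argument applies verbatim to $A^{\rm op}$ and yields the right Ore condition for $A$. Your direct induction on the $\delta$-nilpotence index accomplishes the same thing and makes the use of the $q$-skew relation (to keep $\delta\tau^{-1}(r)$ inside $\ker\delta^{n-1}$) more explicit; the paper's opposite-ring trick buys brevity, while your argument buys transparency about exactly where the commutation hypothesis is needed.
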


\begin{proof}
Clearly, $S$ is a multiplicative set in $A$.  And, since  $S$ contains only regular elements of $A$, it is left and right reversible.  It remains to show that $S$ is an Ore set.

Let $a=\sum_{i=0}^n r_i x^i$ be an element of $A$ with $r_n \ne 0$.
For each $r_i$ in the expression of $a$, and each $m_i \ge 0$, we have  
\begin{align*}
x^{m_i} r_i& = \sum_{j=0}^{m_i} \binom{m_i}{j}_q \tau^{m_i -j} \delta^j(r_i)x^{m_i -j} \\
  &= a'_ix+ \delta^{m_i}(r_i) \quad \text{for some  } a'_i\in A.
\end{align*}  

Since $\delta$ is locally nilpotent, we may choose $m_i$ to be the $\delta$-nilpotence index of $r_i$ to conclude that $x^{m_i}r_i= a'_ix$ for some $a'_i\in A$.  Set $m_a =\max\lbrace m_i \, \mid 0\le i\le n \rbrace$.  Then for each $r_i$, we have $x^{m_a}r_i=\tilde{a}_ix$, and hence $x^{m_a}a=\tilde{a}x$ for some $\tilde{a}\in A$.  

Now suppose, inductively, that for a given $a\in A$ and $x^p\in S$ we can find elements $x^{m_a}\in S$ and $\bar{a}\in A$ such that $x^{m_a}a=\bar{a}x^p$, say $\bar{a}=\sum_{i=0}^{n} \bar{r}_ix^i$. We know that there exists an element $x^{m_{\bar{a}} }$ such that $x^{m_{\bar{a}} }\bar{a}=a' x$ for some $a'\in A$.  So, $x^{m_a}a = \bar{a}x^p$ implies $x^{m_{\bar{a}} + m_a}a = a' x^{p+1}$, completing the induction. 

Hence, for any $a\in A$ and $s\in S$, we have $Sa\cap As \ne \emptyset$. So $S$ is a left Ore set in $A$.  We see that $S$ is a right Ore set by applying the same argument to $A^{\rm op} = R^{\rm op}[x;\tau^{-1},-\delta\tau^{-1}]. $
\end{proof}

Suppose also that the derivation $\delta$ extends to an iterative, locally nilpotent higher $q$-skew $\tau$-derivation $\lbrace d_i \rbrace$ on $R$ and that $q \ne 1$. Denote $\widehat A = AS^{-1} = S^{-1}A$, the localization of $A$ with respect to $S$, and define a map $f:R\xrightarrow{\quad}  \widehat{A} $ by 
\begin{equation*}
f(r)=\sum_{n=0}^\infty q^{\frac{n(n+1)}{2} }(q-1)^{-n} d_n \tau^{-n}(r)x^{-n},
\end{equation*}
noting that $\lbrace d_i \rbrace$ is locally nilpotent and that $q-1$ is invertible.  If $q$ is not a root of unity and $\{ d_i \}$ is obtained from a $q$-skew $\tau$-derivation $\delta$ as in (\ref{qwaderivation}), the formula for $f$ can be rewritten as
\begin{equation*}
f(r)=\sum_{n=0}^\infty q^{\frac{n(n+1)}{2} } \frac{(q-1)^{-n}}{(n)!_q}  \delta^n \tau^{-n}(r)x^{-n}.
\end{equation*}
The rewritten formula matches the one presented in \cite[Section 2]{cau} when $q$ is replaced by $q^{-1}$ to account for the difference between $\delta \tau = q \tau \delta$ (used here) and $\tau \delta = q \delta \tau$ (used in \cite{cau}). We will show that $f$ is a homomorphism and that the the multiplication in ${\rm im} f$ is made simpler than that in $A$ by removing the derivation, as seen in the following.

\begin{proposition}\label{frels}
If $r\in R$, then $x f(r)=f\bigl ( \tau(r) \bigr )x$ in $\widehat A$.
\end{proposition}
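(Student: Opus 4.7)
My plan is to expand $xf(r)$ using the skew-polynomial relation $xs = \tau(s)x + \delta(s)$ (for $s \in R$) applied term by term, then use the $q$-skew property of the higher derivation together with iterativity to reduce everything into a canonical form in powers of $x^{-1}$, and finally compare with the straightforward expansion of $f(\tau(r))x$.

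Concretely, the first step is to apply $x$ through each summand of $f(r)$. For each $n$, since $d_n\tau^{-n}(r) \in R$, we have
\begin{equation*}
x\bigl[d_n\tau^{-n}(r)\bigr] = \tau\bigl(d_n\tau^{-n}(r)\bigr)x + \delta\bigl(d_n\tau^{-n}(r)\bigr).
\end{equation*}
The $q$-skew relation $d_n\tau = q^n \tau d_n$ gives $\tau d_n = q^{-n}d_n\tau$, so the first piece becomes $q^{-n}d_n\tau^{-(n-1)}(r)x$. For the second piece, iterativity with $d_1 = \delta$ yields $\delta d_n = d_1 d_n = \binom{n+1}{1}_q d_{n+1} = (n+1)_q d_{n+1}$, so the second piece becomes $(n+1)_q d_{n+1}\tau^{-n}(r)$. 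Since powers of $x$ commute among themselves, multiplying by $x^{-n}$ then produces the two indexed summands making up $xf(r)$.

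The second step is the bookkeeping: reindex the first (derivation-free) sum by $m=n-1$, noting that the $n=0$ case gives a lone $\tau(r)x$ corresponding to the leading term of $f(\tau(r))x$, and reindex the second sum by $m=n$. Then for each $m\ge 0$, the coefficient of $d_{m+1}\tau^{-m}(r)x^{-m}$ in $xf(r)$ becomes
\begin{equation*}
q^{(m+1)(m+2)/2 - (m+1)}(q-1)^{-(m+1)} + q^{m(m+1)/2}(q-1)^{-m}(m+1)_q,
\end{equation*}
and a short calculation shows $(m+1)(m+2)/2 - (m+1) = m(m+1)/2$, so both terms carry the common factor $q^{m(m+1)/2}(q-1)^{-(m+1)}$. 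The remaining bracket is $1 + (q-1)(m+1)_q = q^{m+1}$, using the familiar identity $(q-1)(m+1)_q = q^{m+1}-1$.

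The final step is to check that the resulting coefficient $q^{m(m+1)/2}(q-1)^{-(m+1)} \cdot q^{m+1} = q^{(m+1)(m+2)/2}(q-1)^{-(m+1)}$ is exactly the coefficient of $d_{m+1}\tau^{-m}(r)x^{-m}$ in the expansion
\begin{equation*}
f(\tau(r))x = \tau(r)x + \sum_{m=0}^{\infty} q^{(m+1)(m+2)/2}(q-1)^{-(m+1)} d_{m+1}\tau^{-m}(r) x^{-m},
\end{equation*}
obtained by substituting $\tau(r)$ into the definition of $f$ and shifting indices. The main (minor) obstacle is just the reindexing combined with the small algebraic identity $1 + (q-1)(m+1)_q = q^{m+1}$; once that is in hand the two power-series expressions match term by term, giving $xf(r) = f(\tau(r))x$ as required.
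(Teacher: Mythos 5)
Your proposal is correct and follows essentially the same route as the paper: expand $xf(r)$ term by term via $xs=\tau(s)x+\delta(s)$, use $\tau d_n = q^{-n}d_n\tau$ and $\delta d_n = (n+1)_q d_{n+1}$ (iterativity), reindex the two resulting sums so they align, and combine coefficients with the identity $1+(q-1)(m+1)_q=q^{m+1}$ to match $f(\tau(r))x$. The only difference is cosmetic bookkeeping: you reindex both sums to a common variable $m$ and factor out the common power of $q$ directly, while the paper shifts the second sum to index $n$ and simplifies the bracket in place, but the calculation is identical.
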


 \begin{proof} Using the hypothesis that $\{ d_i \}$ is iterative, we compute that
{\allowdisplaybreaks \begin{align*}
xf(r)&  = \sum_{n=0}^\infty q^{\frac{n(n+1)}{2} }(q-1)^{-n} x d_n \tau^{-n}(r)x^{-n} \\
& =\sum_{n=0}^\infty  q^{\frac{n(n+1)}{2} }(q-1)^{-n} \biggl[ \tau d_n \tau^{-n}(r) x+ d_1 d_n\tau^{-n}(r) \biggr] x^{-n} \\
& =\sum_{n=0}^\infty  q^{\frac{n(n+1)}{2} }(q-1)^{-n}q^{-n} d_n \tau^{-n+1}(r)x^{-n+1}\\
& \qquad \quad +\sum_{n=0}^\infty  q^{\frac{n(n+1)}{2} }(q-1)^{-n}(n+1)_q d_{n+1} \tau^{-n}(r)x^{-n} \\
& =\sum_{n=0}^\infty q^{\frac{n(n+1)}{2} }(q-1)^{-n} q^{-n}d_n \tau^{-n}( \tau(r) )x^{-n+1}\\
& \qquad \quad + \sum_{n=1}^\infty q^{\frac{n(n-1)}{2} }(q-1)^{-n+1}(n)_q d_n \tau^{-n}( \tau(r )) x^{-n+1} \\
& =\tau(r)x\\
& \qquad +\sum_{n=1}^\infty \biggl [  q^{\frac{n(n+1)}{2}} (q-1)^{-n} q^{-n}  +   q^{\frac{n(n-1)}{2}}  (q-1)^{-n+1} (n)_q\biggr ] d_n \tau^{-n}(\tau(r))x^{-n+1} \\
& =\tau(r)x\\
& \qquad + \sum_{n=1}^\infty (q-1)^{-n} \biggl [ q^{\frac{n^2-n}{2}}+q^{\frac{n^2-n}{2}}(q^n-1) \biggr ] d_n\tau^{-n}(\tau(r))x^{-n+1} \\
& =\tau(r)x + \sum_{n=1}^\infty (q-1)^{-n} q^{\frac{n(n+1)}{2}} d_n \tau^{-n}( \tau(r) )x^{-n+1} \\
& =\Biggl ( \sum_{n=0}^\infty  q^{\frac{n(n+1)}{2} }(q-1)^{-n} d_n \tau^{-n}( \tau(r) )x^{-n} \Biggr )x \, = \, f\bigl (\tau(r) \bigr )x, 
\end{align*}}
which gives the result.
\end{proof}

From Proposition \ref{frels}, it follows by routine induction that 
\begin{equation}\label{imagemult}
x^mf(r)=f\bigl (\tau^m(r) \bigr )x^m \quad \forall m \in \mathbb Z.
\end{equation}

This is what we need in order to show that our map is indeed a $k$-algebra homomorphism.

\begin{proposition} 
The map $f:R \xrightarrow{\quad} \widehat A$ is a $k$-algebra homomorphism.
\end{proposition}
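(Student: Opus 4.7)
The plan is to verify that $f$ is additive, $k$-linear, unital, and multiplicative. The first two properties are immediate from the definition, since every summand $q^{n(n+1)/2}(q-1)^{-n} d_n \tau^{-n}(r) x^{-n}$ is $k$-linear in $r$. For unitality, apply the higher-$\tau$-derivation product rule to the identity $1 = 1 \cdot 1$: this yields $d_n(1) = \sum_{i=0}^n \tau^{n-i}d_i(1)d_{n-i}(1)$, and an easy induction using $d_0(1)=1$ forces $d_n(1)=0$ for $n\ge 1$, so $f(1)=1$.

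The substantive step is multiplicativity. I would begin with
$$f(r)f(s) = \sum_{m=0}^\infty q^{\binom{m+1}{2}}(q-1)^{-m} d_m \tau^{-m}(r)\, x^{-m} f(s),$$
then invoke equation~(\ref{imagemult}), which gives $x^{-m}f(s) = f(\tau^{-m}(s))x^{-m}$. Expanding $f(\tau^{-m}(s))$ as its defining series and collecting by the total exponent $k=m+n$ of $x^{-1}$ produces
$$f(r)f(s) \;=\; \sum_{k=0}^\infty (q-1)^{-k}\!\!\sum_{m+n=k} q^{\binom{m+1}{2}+\binom{n+1}{2}}\, d_m \tau^{-m}(r)\, d_n \tau^{-k}(s)\, x^{-k}.$$
On the other hand, applying the product rule of Definition~\ref{highder} directly to $d_k\tau^{-k}(rs)=d_k(\tau^{-k}(r)\tau^{-k}(s))$ gives
$$f(rs) \;=\; \sum_{k=0}^\infty q^{\binom{k+1}{2}}(q-1)^{-k}\!\!\sum_{m+n=k} \tau^n d_m\tau^{-k}(r)\, d_n\tau^{-k}(s)\, x^{-k}.$$

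It remains to match the inner sums term by term. Iterating the $q$-skew relation $d_m\tau = q^m\tau d_m$ yields $d_m\tau^n = q^{mn}\tau^n d_m$, and since $\tau^{-m}=\tau^n\tau^{-k}$ (when $m+n=k$), this rewrites the $r$-factor as $d_m\tau^{-m}(r)=q^{mn}\tau^n d_m\tau^{-k}(r)$. Substituting, the equality of coefficients collapses onto the elementary identity
$$\binom{m+1}{2} + \binom{n+1}{2} + mn \;=\; \binom{m+n+1}{2},$$
which follows from $\tfrac12 m(m+1)+\tfrac12 n(n+1)+mn = \tfrac12(m+n)(m+n+1)$.

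The only real obstacle is the bookkeeping: managing the interaction between the $q$-powers coming from the coefficients $q^{\binom{m+1}{2}}(q-1)^{-m}$ of $f$ and those produced by conjugating $d_m$ by $\tau^n$. Handling $f(s)$ as a unit via~(\ref{imagemult}) — rather than commuting $x^{-m}$ past each individual $d_n\tau^{-n}(s)$ — is what keeps the double sum tractable and lets everything reduce to the single combinatorial identity above.
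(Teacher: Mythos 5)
Your proposal is correct and follows essentially the same route as the paper: both expand $f(r)f(s)$, use Proposition~\ref{frels} (equivalently, equation~(\ref{imagemult})) to move $x^{-m}$ past $f(s)$, collect by total degree in $x^{-1}$, convert $d_m\tau^{-m}$ to $\tau^n d_m\tau^{-k}$ via the $q$-skew relation, and finish with the product rule of Definition~\ref{highder}. The only cosmetic difference is that you package the final comparison of powers of $q$ as the identity $\binom{m+1}{2}+\binom{n+1}{2}+mn=\binom{m+n+1}{2}$, whereas the paper absorbs this into the direct computation of the coefficient $c_n$.
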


\begin{proof} It is immediate that $f$ is $k$-linear ($\tau$ and $\lbrace d_i \rbrace$ are $k$-linear), and that $f(1) = 1$.  We'll show that $f$ is multiplicative. If $r,s\in R$, then using Prop. \ref{frels}, 
\begin{align*}
f(r)f(s)& =\sum_{i=0}^\infty q^{\frac{i(i+1)}{2} }(q-1)^{-i} d_i \tau^{-i}(r)x^{-i}f(s)\\
& =\sum_{i=0}^\infty q^{\frac{i(i+1)}{2} }(q-1)^{-i} d_i \tau^{-i}(r)f( \tau^{-i}(s) )x^{-i} \\\
& =\sum_{i \ge 0,\, j \ge 0}^{}  q^{\frac{i(i+1)+j(j+1)}{2} }(q-1)^{-(i+j)} d_i \tau^{-i}(r) d_j \tau^{-(i+j)}(s) x^{-(i+j)}.
\end{align*}
 
For $n\in \mathbb N$, the coefficient of $x^{-n}$ in the sum above is
{\allowdisplaybreaks \begin{align*}
c_n& =\sum_{\substack{i \ge 0,\, j \ge 0,\\ i+j =n}}^{} q^{\frac{i(i+1)+j(j+1)}{2} }(q-1)^{-n} d_i \tau^{-i}(r) d_j \tau^{-n}(s)\\
& =\sum_{p=0}^n q^{\frac{(n-p)^2+p^2+n}{2} } (q-1)^{-n} d_{n-p}\tau^{p-n}(r) d_{p}\tau^{-n}(s) \\
& = q^{\frac{n(n+1)}{2}}(q-1)^{-n} \sum_{p=0}^n q^{p(p-n)} d_{n-p} \tau^p \tau^{-n}(r) d_p \tau^{-n}(s) \\
& = q^{\frac{n(n+1)}{2}}(q-1)^{-n} \sum_{p=0}^n \tau^p d_{n-p}( \tau^{-n}(r) )d_p( \tau^{-n}(s)) \\
& = q^{\frac{n(n+1)}{2}}(q-1)^{-n} d_n( \tau^{-n}(r) \tau^{-n}(s) ) \\ 
& =  q^{\frac{n(n+1)}{2}}(q-1)^{-n} d_n\tau^{-n}(rs),
\end{align*}}
computed by putting $p = j$ and using the second condition in the Definition \ref{highder}.
In summary, $f(r)f(s)=\sum_{n=0}^\infty q^{\frac{n(n+1)}{2} }(q-1)^{-n} d_n \tau^{-n}(rs)x^{-n}=f(rs)$.
\end{proof} 

\begin{proposition}
 {\rm (1)} The map $f$ extends uniquely to an algebra homomorphism, also denoted $f$, of $R[y;\tau]$ to $\widehat A$ satisfying f(y)=x.

{\rm (2)} The extended homomorphism is injective.
\end{proposition}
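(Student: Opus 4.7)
The map $f\colon R\to\widehat{A}$ is already a $k$-algebra homomorphism by the previous proposition, so I would simply invoke the universal property of the skew polynomial ring $R[y;\tau]$: extending $f$ to a homomorphism $R[y;\tau]\to\widehat{A}$ by the assignment $y\mapsto x$ is equivalent to verifying the twisted commutation relation $xf(r)=f(\tau(r))x$ for all $r\in R$. That relation is precisely Proposition \ref{frels}, so the extension exists and is unique.

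\textbf{Part (2).} I would use a leading-coefficient argument inside $A$. Suppose $\phi=\sum_{i=0}^n r_iy^i\in R[y;\tau]$ with $r_n\neq 0$, and compute $f(\phi)=\sum_i f(r_i)x^i$ from the defining series of $f$. A priori each $f(r_i)$ is an infinite series in $x^{-m}$; however, the $q$-skew commutation $d_m\tau^{-m}=q^{-m^2}\tau^{-m}d_m$ combined with local nilpotence of $\{d_m\}$ shows that the coefficient $d_m\tau^{-m}(r_i)$ vanishes once $m$ exceeds the $d$-nilpotence index of $r_i$, so each $f(r_i)$ is a finite Laurent expression in $x$. Choose $N$ larger than every $d$-nilpotence index of the $r_i$, so that $f(\phi)\cdot x^N$ lies in $A$. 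Expanded in the usual left-$R$ basis $\{x^j\}_{j\geq 0}$ of $A$, the only term contributing the monomial $x^{n+N}$ is the one coming from $(i,m)=(n,0)$, with coefficient $r_n\neq 0$. Since $A$ is free as a left $R$-module on $\{x^j\}$, this gives $f(\phi)x^N\neq 0$ in $A$, and hence $f(\phi)\neq 0$ in $\widehat{A}$.

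\textbf{Main obstacle.} Neither part is conceptually deep: part (1) is a one-line invocation of the universal property once Proposition \ref{frels} is in hand, and part (2) is a standard leading-term argument once one knows that $f(\phi)\cdot x^N\in A$ for sufficiently large $N$. The only mildly subtle point is verifying the finiteness of the series defining $f(r_i)$, since local nilpotence is phrased in terms of $d_m(r_i)$ rather than $d_m\tau^{-m}(r_i)$; the $q$-skew relation bridges the gap. After that, the leading coefficient calculation is routine.
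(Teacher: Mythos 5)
Your proof is correct and follows essentially the same route as the paper: part (1) is the universal property of Ore extensions plus Proposition \ref{frels}, and part (2) is the same leading-coefficient argument, multiplying by $x^N$ to land in $A$ and reading off the nonzero coefficient of $x^{n+N}$. Your explicit note that $d_m\tau^{-m} = q^{-m^2}\tau^{-m}d_m$ is needed to pass from local nilpotence of $\{d_m\}$ to vanishing of $d_m\tau^{-m}(r_i)$ fills in a small detail the paper leaves implicit.
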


\begin{proof} (1) This result follows from Proposition \ref{frels} and the universal property of Ore extensions.

(2) Let $P=p_m y^m+\dotsb +p_1y+p_0$ be a nonzero element of $R[y;\tau]$, where each $p_i\in R,\linebreak[4] \, m\ge 0, \, p_m\neq 0$.  Then $f(P)=f(p_m)x^m+\dotsb +f(p_1)x+f(p_0)$. Since 
\begin{equation*}
f(p_i)=\sum_{n=0}^\infty q^{\frac{n(n+1)}{2} }(q-1)^{-n} d_n \tau^{-n}(p_i)x^{-n}\, \in AS^{-1}, 
\end{equation*}
we know that there exists an integer $l\ge 0$ such that each $f(p_i)x^l$ is a nonzero element of A of positive degree $l$ (in $x$) whenever $p_i\neq 0$. (Because $\lbrace d_i \rbrace$ is locally nilpotent, we may choose an $l$ large enough.)  It follows that $f(P)x^l$ is a nonzero element of $\widehat A$ of degree $m+l$, hence $f(P)\neq 0$. 
\end{proof}

\begin{definition} 
{\rm The algebra homomorphism $f:R[y;\tau]\xrightarrow{\quad} \widehat A = AS^{-1}$ is called the {\em derivation removing homomorphism}. The image of $f$, call it $A^\prime$,  is the subalgebra of $\widehat A = AS^{-1}$ generated by $x$ and $f(R)$, and is isomorphic (as an algebra) to $R[y;\tau]$ by the derivation removing homomorphism $f$. }
\end{definition}

Observe that $A^\prime$ contains the multiplicative system $S=\lbrace x^n \mid n\in \mathbb N \cup \{0\} \rbrace$.  Since equation (\ref{imagemult}) holds and $f(y) = x$, the elements of this set are normal in $A^\prime$.  Hence, $S$   satisfies the (two-sided) Ore condition in $A^\prime$.  The elements of $S$ are regular in $A^\prime$ because they are regular in $\widehat A$, and thus:

\begin{proposition}\label{eqquotrings}
\quad $A^\prime S^{-1}=AS^{-1}$
\end{proposition}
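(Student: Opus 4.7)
The plan is to prove the two containments separately, with the easy direction $A^\prime S^{-1} \subseteq AS^{-1}$ coming immediately from the universal property of localization applied to the inclusion $A^\prime \hookrightarrow \widehat A = AS^{-1}$: the elements of $S \subseteq A^\prime$ are already units in $\widehat A$, so the map extends uniquely and injectively to $A^\prime S^{-1}$.

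For the reverse inclusion $AS^{-1} \subseteq A^\prime S^{-1}$, I would reduce to showing $R \subseteq A^\prime S^{-1}$. This suffices because $A$ is generated as a $k$-algebra by $R$ and $x$, and $x = f(y) \in A^\prime$; together with $S^{-1} \subseteq A^\prime S^{-1}$ inside $\widehat A$ this forces $AS^{-1} \subseteq A^\prime S^{-1}$.

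The heart of the argument is then $R \subseteq A^\prime S^{-1}$, which I would prove by induction on the $d$-nilpotence index of $r \in R$, using that $\lbrace d_i \rbrace$ is locally nilpotent. The base case (index $\le 1$) is immediate because then the defining series collapses to $f(r) = r$, so $r \in A^\prime$. For the inductive step at index $n \ge 2$, I would solve the formula defining $f$ for $r$, writing
\begin{equation*}
r \;=\; f(r) \;-\; \sum_{i=1}^{n-1} q^{i(i+1)/2}(q-1)^{-i}\, d_i\tau^{-i}(r)\, x^{-i},
\end{equation*}
and then establish the key subclaim that each coefficient $d_i\tau^{-i}(r)$ has $d$-nilpotence index at most $n-i$, hence strictly smaller than $n$ for $i \ge 1$. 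Combining iterativity $d_j d_i = \binom{i+j}{j}_q d_{i+j}$ with the commutation $d_j\tau^{-i} = q^{-ij}\tau^{-i}d_j$ gives $d_j\bigl(d_i\tau^{-i}(r)\bigr) = \binom{i+j}{j}_q q^{-i(i+j)}\tau^{-i}d_{i+j}(r)$, which vanishes as soon as $j \ge n-i$ since $d_{i+j}(r) = 0$ there. The inductive hypothesis then places each $d_i\tau^{-i}(r)$ in $A^\prime S^{-1}$, and since $f(r) \in A^\prime$ and $x^{-1} \in A^\prime S^{-1}$, the displayed equation realizes $r$ as an element of $A^\prime S^{-1}$.

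The point to watch is that $q$ may well be a root of unity in the PI setting, so some $\binom{i+j}{j}_q$ can vanish; but the implication we need only runs in the direction ``$d_{i+j}(r)=0$ forces the whole expression to zero,'' which is unaffected by collapsing binomials, so the induction survives the root-of-unity case cleanly.
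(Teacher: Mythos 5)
Your proof is correct and follows essentially the same route as the paper: identify $A'S^{-1}$ as a subring of $AS^{-1}$ for the easy containment, reduce the other containment to showing $R \subseteq A'S^{-1}$, and induct on the $d$-nilpotence index by solving the defining series for $r$ and bounding the index of each coefficient $d_i\tau^{-i}(r)$ via iterativity and the $q$-skew commutation. The paper states only that $d_{\ell-n}(r_n)=0$ while you spell out the bound $d_j(d_i\tau^{-i}(r))=0$ for all $j\ge n-i$, and your explicit remark that the argument is insensitive to vanishing $q$-binomial coefficients at roots of unity is a worthwhile clarification not made in the paper's proof, but these are refinements of the same argument rather than a different method.
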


\begin{proof} We have $A^\prime S^{-1}\subseteq AS^{-1}$ because $A^\prime = {\rm im}(f) \subseteq AS^{-1}$.  To show the other inclusion, it suffices to show that $R\subseteq A^\prime S^{-1}$. (This suffices because $A$ is built up from $R$ by $x,\, x^2, \dotsc$.  So if $R\subseteq A^\prime S^{-1}$, then $AS^{-1}\subseteq A^\prime S^{-1}$.)  Consider any $r\in R$ and let $\ell$ be the $d$-nilpotence index of $r$.
We show that $r\in A^\prime S^{-1}$ with an induction argument on $\ell$.

If $\ell\le 1$, then $d_1(r)=0$, whence $f(r)=r\in A^\prime \subseteq AS^{-1}$.

If $\ell\ge 2$, we write 
\begin{equation*}
f(r)=r+\sum_{n=1}^{\ell-1} r_n x^{-n}, \qquad \text{with } r_n= q^{\frac{n(n+1)}{2} }(q-1)^{-n} d_n\tau^{-n}(r)\in R.
\end{equation*}

We'll show that $\sum_{n=1}^{\ell-1} r_n x^{-n}\in A^\prime S^{-1}$ in order to conclude that $r \in A^\prime S^{-1}$, because $f(r)-\sum_{n=1}^{\ell-1} r_n x^{-n}=r$. That is, we need to show that each $r_n\in A^\prime S^{-1}$. 
Suppose, inductively, that for any element $\tilde{r}\in R$ with $d$-nilpotence index $m$ such that $m<\ell$, we have $\tilde{r}\in A^\prime S^{-1}$.

Note that for $n\in \lbrace 1,\dotsc ,\ell \rbrace$, we have 
\begin{equation*}
d_{\ell-n}(r_n)=q^{\frac{n(n+1)}{2} } (q-1)^{-n} \binom{\ell}{n}_q d_{\ell}\tau^{-n}(r) = q^{\frac{n(n+1)}{2} } (q-1)^{-n} \binom{\ell}{n}_q q^{-n\ell}\tau^{-n}d_\ell (r)=0
\end{equation*}
because $d_\ell (r)=0$ by hypothesis.

Hence, by the induction hypothesis, each $r_n \in A^\prime S^{-1}$ for $1\le n\le \ell-1$.  It follows that $r=f(r)-\sum_{n=1}^{\ell-1} r_nx^{-n}$ also belongs to $A^\prime S^{-1}$.
\end{proof}

This equality of quotient rings reveals that if $A$ is a PI ring, then
\begin{equation*}
{\rm PIdeg}\,A = {\rm PIdeg}\, A' = {\rm PIdeg}\, R[y;\tau],
\end{equation*}
with the second equality arising from the derivation removing homomorphism $f$.  This recovers the result of J\o ndrup \cite{jondrup} without the assumption that $k$ has characteristic zero. We summarize the results of this section in the following theorem.

\begin{theorem}\label{summary}
Let $k$ be a field, $R$ a $k$-algebra and $A= R[x; \tau, \delta]$ a $q$-skew polynomial ring in which $\delta$ extends to a locally nilpotent, iterative h.$q$-s.$\tau$-d. $\{d_i\}$ on $R$ for some \linebreak[4]$q \in k^{\times}, \, q \ne 1$.  Let $S$ be the Ore set in $A$ generated by $x$, and define a map \linebreak[4]$f: R \longrightarrow AS^{-1}$ by $f(r)= \sum_{n=0}^{\infty} q^{\frac{n(n+1)}{2}} (q-1)^{-n} d_n\tau^{-n}(r) x^{-n}$. Then $f$ is a $k$-algebra homomorphism, and it extends to an injective homomorphism $f: R[y; \tau] \longrightarrow AS^{-1}$ sending $y$ to $x$.  Furthermore, the extension $f: R[y^{\pm 1}; \tau] \longrightarrow AS^{-1}$ is an isomorphism.  So there is PI degree parity between $A$ and $R[y; \tau]$.  Moreover, if $R$ is a noetherian domain, then ${\rm Fract}\, A \cong {\rm Fract}\, R[y; \tau]$.
\end{theorem}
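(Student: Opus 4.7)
The theorem is essentially a summary: the first four assertions (that $f\colon R\to AS^{-1}$ is a $k$-algebra homomorphism, that $f$ extends uniquely to a homomorphism $R[y;\tau]\to AS^{-1}$ sending $y$ to $x$, and that this extension is injective) are exactly the content of the two propositions preceding Definition of the derivation-removing homomorphism and of Proposition \ref{eqquotrings}. So the plan is to assemble these and then handle the three remaining claims: the Laurent extension is an isomorphism, there is PI degree parity, and the fraction fields agree when $R$ is a noetherian domain.

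First, I would extend $f$ to $R[y^{\pm 1};\tau]$. Since $f(y)=x$ is a unit in $AS^{-1}$, the universal property of the Ore localization at the powers of $y$ gives a unique $k$-algebra homomorphism $\tilde f\colon R[y^{\pm 1};\tau]\to AS^{-1}$ extending $f$. For surjectivity, the image of $\tilde f$ contains $A'=\mathrm{im}(f)$ as well as $\tilde f(y^{-1})=x^{-1}$, hence contains $A'S^{-1}$, which equals $AS^{-1}$ by Proposition \ref{eqquotrings}. For injectivity, any element of $R[y^{\pm 1};\tau]$ can be written as $Py^{-n}$ with $P\in R[y;\tau]$ and $n\ge 0$; if $\tilde f(Py^{-n})=f(P)x^{-n}=0$ then $f(P)=0$ (because $x^{-n}$ is a unit), whence $P=0$ by injectivity of $f$ on $R[y;\tau]$.

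For PI degree parity, I would use the standard fact that Ore localization preserves the PI property and the PI degree. Thus $A$ is a PI ring if and only if $AS^{-1}$ is, with equal PI degree; and the same statement holds for the pair $R[y;\tau]\subseteq R[y^{\pm 1};\tau]$. Combining with the isomorphism $R[y^{\pm 1};\tau]\cong AS^{-1}$ gives
\begin{equation*}
\mathrm{PIdeg}\,A \;=\; \mathrm{PIdeg}\,AS^{-1} \;=\; \mathrm{PIdeg}\,R[y^{\pm 1};\tau] \;=\; \mathrm{PIdeg}\,R[y;\tau],
\end{equation*}
and the ``iff'' statement passes along the same chain.

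Finally, for the fraction-field claim, if $R$ is a noetherian domain, then the Ore extensions $A=R[x;\tau,\delta]$ and $R[y;\tau]$ are also noetherian domains, in particular right Ore domains. Because $S$ consists of regular elements, $\mathrm{Fract}\,A=\mathrm{Fract}(AS^{-1})$; likewise $\mathrm{Fract}\,R[y;\tau]=\mathrm{Fract}(R[y^{\pm 1};\tau])$. The isomorphism established above then forces $\mathrm{Fract}\,A\cong\mathrm{Fract}\,R[y;\tau]$. The only step that requires genuine care (rather than bookkeeping) is surjectivity of $\tilde f$, which depends precisely on Proposition \ref{eqquotrings}; everything else is either previously established or a routine application of the universal property of Ore localization.
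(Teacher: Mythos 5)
Your proposal is correct and matches the paper's intent: the paper states Theorem \ref{summary} with the preface ``We summarize the results of this section in the following theorem'' and gives no separate proof, precisely because the homomorphism and injectivity claims are the two unnumbered propositions on $f$ and the Laurent isomorphism is Proposition \ref{eqquotrings}, while the PI-degree and Fract consequences follow exactly as you argue from localization at regular elements. The only remark worth adding is that the ``standard fact'' you invoke (that Ore localization at regular elements preserves the PI property and PI degree) is indeed what the paper tacitly relies on in the sentence following Proposition \ref{eqquotrings}, so your reasoning is faithful to the paper's own (unwritten) proof.
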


\section{Main Theorem}

In the case where $A$ is an iterated skew polynomial ring, we would like to apply repeatedly the method presented above to remove all of the derivations and compare the resulting Ore localizations.  We must first establish some facts about the behavior of h.$q$-s.$\tau$-d. when the variables adjoined to the coefficient ring are rearranged, and about iterated localization.  The results of these lemmas will ensure that after the induction step in the proof of the main theorem we are left with a ring to which the method of the preceding section applies.

The first parts of the following lemmas hold in a broader class of skew polynomial rings and also when the $q$-skew condition is imposed.  The final parts assert that h.$q$-s.$\tau$-d. are preserved when rearranging of the variables is permissible.

\begin{lemma}\label{switchingA}
Let $S=R[x;\tau, \delta]$, $A=R[x;\tau, \delta][y;\sigma]$, and $\widehat A = R[x;\tau, \delta][y^{\pm 1};\sigma]$, where $\sigma(R)=R$ and $\sigma(x)=\lambda x$ for some $\lambda \in k^{\times}$.

{\rm (1)} Then $A=R[y;\sigma'][x;\tau'; \delta']$, and $\widehat A = R[y^{\pm 1};\sigma'][x;\tau'; \delta']$, where $\sigma'=\sigma \bigl \vert_R$, $\tau' \bigl \vert_R =\tau$, $\delta' \bigl \vert_R =\delta$, $\tau'(y)=\lambda^{-1}y$, and $\delta'(y)=0$.

{\rm (2)} If $(\tau,\delta)$ is $q$-skew, then so is $(\tau', \delta')$. 

{\rm (3)} Suppose further that $\delta$ extends to a h.$q$-s.$\tau$-d. $\lbrace d_i \rbrace$ on $R$, and that  $\sigma d_i = \lambda^i d_i \sigma$ for all $i$. Then the $\tau'$-derivation $\delta'$ extends to a h.$q$-s.$\tau'$-d. $\lbrace d'_i \rbrace$ on $R[y^{\pm 1};\sigma']$ such that the restrictions of the $d'_i$ to $R$ coincide with $d_i$, and $d'_i (y)=0$ for all $i \ge 1$. Moreover, $\{ d'_i \}$ restricts to a h.$q$-s.$\tau'$-d. on $R[y; \sigma']$.

\quad {\rm (a)} If $\{d_i\}$ is iterative, then $\{d'_i\}$ is iterative.
     
\quad {\rm (b)} If $\{d_i\}$ is locally nilpotent, then $\{d'_i\}$ is locally nilpotent.

\end{lemma}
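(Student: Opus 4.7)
Part (1) will follow from the universal property of Ore extensions once we confirm that $\tau'$ and $\delta'$ are well defined on $R[y;\sigma']$. Since $\sigma$ is an automorphism of $R[x;\tau,\delta]$ with $\sigma(x)=\lambda x$, applying $\sigma$ to the relation $xr=\tau(r)x+\delta(r)$ and comparing coefficients of $x$ and the constant term yields $\sigma\tau=\tau\sigma$ and $\sigma\delta=\lambda\delta\sigma$ on $R$. These are exactly the commutation identities required to check that $\tau'$ (extending $\tau$ via $\tau'(y)=\lambda^{-1}y$) is compatible with $yr=\sigma(r)y$, and likewise that $\delta'$ (extending $\delta$ via $\delta'(y)=0$) is a well-defined $\tau'$-derivation. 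Both $A$ and $R[y;\sigma'][x;\tau';\delta']$ then have matching presentations by generators $R\cup\{x,y\}$ and relations $xr=\tau(r)x+\delta(r)$, $yr=\sigma(r)y$, $yx=\lambda xy$, so they coincide. Localizing at $y$ yields the $\widehat A$ version.

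For part (2), set $E=\delta'\tau'-q\tau'\delta'$. A direct computation using only that $\delta'$ is a $\tau'$-derivation shows $E(ab)=E(a)\tau'(b)+\tau'^2(a)E(b)$, i.e., $E$ is a $\tau'^2$-derivation. Since $E$ vanishes on $R$ (by the $q$-skew hypothesis on $(\tau,\delta)$) and on $y$ (where $\delta'(y)=0$ and $\tau'(y)=\lambda^{-1}y$), it vanishes on the subalgebra they generate, which is all of $R[y;\sigma']$.

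For part (3), I will define $d_i'$ on $R[y;\sigma']$ by $d_i'\bigl(\sum_k r_k y^k\bigr)=\sum_k d_i(r_k)y^k$, which is $k$-linear and unambiguous because $R[y;\sigma']$ is a free left $R$-module on $\{y^k\}_{k\geq 0}$. Then $d_0'=\mathrm{id}$, $d_i'|_R=d_i$, and $d_i'(y)=0$ for $i\geq 1$ by construction. The crux is the product rule $d_n'(ab)=\sum_j\tau'^{n-j}d_j'(a)d_{n-j}'(b)$ on $a=r_1 y^{k_1}$, $b=r_2 y^{k_2}$. The left-hand side simplifies to $d_n(r_1\sigma^{k_1}(r_2))y^{k_1+k_2}$ using $y^{k_1}r_2=\sigma^{k_1}(r_2)y^{k_1}$ and the product rule for $\{d_i\}$ on $R$. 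Expanding the right-hand side, the factor $\lambda^{-(n-j)k_1}$ coming from $\tau'^{n-j}(y^{k_1})$ cancels the factor $\lambda^{(n-j)k_1}$ produced when commuting $y^{k_1}$ past $d_{n-j}(r_2)$ via the iterated hypothesis $\sigma^{k_1}d_i=\lambda^{ik_1}d_i\sigma^{k_1}$, leaving exactly the same expression. The $q$-skew identity $d_i'\tau'=q^i\tau'd_i'$ is verified in the same monomial-wise fashion, using $d_i\tau=q^i\tau d_i$ and $d_i'(y)=0$ for $i\geq 1$.

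Finally, I extend $\{d_i'\}$ to $R[y^{\pm 1};\sigma']$ via Proposition \ref{extendtoloc}: although the naive denominator set $\{y^n\}$ is only $\tau'$-stable up to scalars, the enlarged set $\{cy^n:c\in k^\times,\ n\geq 0\}$ is $\tau'$-stable and yields the same localization. Iterativity (a) is immediate from the formula: $d_i'd_j'(ry^k)=d_id_j(r)y^k=\binom{i+j}{j}_q d_{i+j}(r)y^k$. For local nilpotence (b), Lemma \ref{extendlocnilp} reduces the claim to the generators $R\cup\{y^{\pm 1}\}$: locally nilpotent on $R$ by hypothesis, zero on $y$ for $i\geq 1$ by construction, and $d_i'(y^{-1})=0$ for $i\geq 1$ obtained by applying $d_n'$ to $y\cdot y^{-1}=1$ and inducting on $n$. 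The main technical obstacle is the $\lambda$-power bookkeeping in the product rule of part (3): the hypothesis $\sigma d_i=\lambda^i d_i\sigma$ is precisely what makes the cancellations close.
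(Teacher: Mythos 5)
Your proof is correct and follows essentially the same approach as the paper's: you use the same explicit formula $d_i'\bigl(\sum r_k y^k\bigr)=\sum d_i(r_k)y^k$, the same product-rule verification in which the $\lambda$-power from $\tau'^{n-j}(y^{k_1})$ cancels the one from commuting $y^{k_1}$ past $d_{n-j}$ via the iterated hypothesis $\sigma d_i=\lambda^i d_i\sigma$, and the same generator-wise checks for the $q$-skew relation, iterativity, and local nilpotence. The only differences are stylistic: you supply the routine details for part (1) that the paper omits, you package part (2) as the vanishing of the twisted derivation $E=\delta'\tau'-q\tau'\delta'$ rather than comparing the two $\tau'$-derivations $\tau'^{-1}\delta'\tau'$ and $q\delta'$ on generators, and you build $\{d_i'\}$ on $R[y;\sigma']$ and pass to $R[y^{\pm 1};\sigma']$ via Proposition \ref{extendtoloc} (with the scalar-enlarged denominator set) instead of verifying directly on the Laurent extension.
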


\begin{proof}
(1) Routine details omitted so as not to try the patience of the reader. 

(2)  Suppose that $(\tau,\delta)$ is $q$-skew on $R$.  We'll check that the two $\tau'$-derivations $\tau'^{-1} \delta' \tau'$ and $q \delta'$ agree on $R[y^{\pm 1}; \sigma']$.  It suffices to check their agreement on a set of generators, $R \cup \{y, y^{-1} \}$.  It is clear that $\tau'^{-1} \delta' \tau' (r) = q \delta' (r)$ for all $r \in R$.  Since $\delta' (y) = 0$, they agree on $\{y, y^{-1} \}$ as well.  So $(\tau', \delta')$ is $q$-skew.

(3) Define a sequence of maps $d'_i :R[y^{\pm 1}; \sigma'] \rightarrow R[y^{\pm 1}; \sigma']$ by
\begin{equation*}
d'_i \bigl ( \sum_{j=-m}^m r_j y^j \bigr ) = \sum_{j=-m}^m d_i(r_j) y^j.
\end{equation*}

Clearly these are $k$-linear maps, $d'_i(r)= d_i(r)$ for all $r\in R$;  also $d'_i(y) = d_i(1)y = 0$ for $i \ge 1$, and $d'_0$ is the identity on $R[y^{\pm 1}; \sigma']$.

Because $\delta$ extends to $\lbrace d_i \rbrace$ on $R$, we get
\begin{equation*}
d'_1(\sum_{j=-m}^m r_j y^j \bigr ) = \sum_{j=-m}^m d_1(r_j) y^j = \sum_{j=-m}^m \delta(r_j) y^j = \delta' \bigl (\sum_{j=-m}^m r_j y^j \bigr )
\end{equation*}
for all $r_j \in R$.   So $d'_1 = \delta'$ on $R[y^{\pm 1}; \sigma']$.

Now, for integers $j, m, n$, and elements $r, s \in R$,
\begin{align*}
d'_n\bigl ( (ry^j)(sy^m) \bigr ) &= d'_n \bigl ( r\sigma^j(s) y^{j+m} \bigr ) = d_n \bigl ( r\sigma^j(s) \bigr ) y^{j+m}\\
& = \sum_{i=0}^n \tau^{n-i}d_i(r)d_{n-i}(\sigma^j(s) )y^{j+m}\\
& = \sum_{i=0}^n \tau^{n-i}d_i(r)y^j  \sigma^{-j}d_{n-i}(\sigma^j(s) )y^m\\
& = \sum_{i=0}^n \tau^{n-i}d_i(r)y^j \lambda^{-j(n-i)}d_{n-i}(s) y^m\\
& = \sum_{i=0}^n (\tau')^{n-i}\bigl (d_i(r)y^j \bigr )d'_{n-i}(sy^m)\\
& = \sum_{i=0}^n (\tau')^{n-i}d'_i(ry^j)d'_{n-i}(sy^m).
\end{align*}

So  $\lbrace d'_i \rbrace$ satisfies the product rule for a higher $\tau$-derivation on $R[y^{\pm 1}; \sigma']$.

Furthermore, 
\begin{align*}
\tau' d'_i \bigl (\sum_{j=-m}^m r_j y^j \bigr ) &= \tau' \bigl (\sum_{j=-m}^m d_i(r_j) y^j \bigr) = \sum_{j=-m}^m \tau d_i(r_j)\lambda^{-j} y^j,\\
\text{and }\quad d'_i \tau' \bigl (\sum_{j=-m}^m r_j y^j \bigr ) &=  d'_i \bigl (\sum_{j=-m}^m \tau(r_j)\lambda^{-j} y^j \bigr ) = \sum_{j=-m}^m d_i \tau(r_j)\lambda^{-j}  y^j \\
& = q^i \sum_{j=-m}^m \tau d_i(r_j)\lambda^{-j} y^j,    
\end{align*}
giving the $q$-skew relation $d'_i \tau' = q^i \tau' d'_i$ on $R[y^{\pm 1}; \sigma']$.

It follows directly from the definition of the maps $\{ d_i \}$ that their restrictions to the $k$-subalgebra $R[y; \sigma']$ also exhibit the properties of definition \ref{highder}.

If $\lbrace d_i \rbrace$ is iterative on $R$, then $d'_{\ell} d'_i(ry^m) = d'_{\ell} \big ( d_i(r)y^m \big ) = d_{\ell} d_i(r)y^m = \binom{{\ell+i}}{i}_q d_{\ell+i} (r)y^m\\ =  \binom{{\ell+i}}{i}_q d'_{\ell+i} (ry^m)$ for all $r \in R$, $m \in \mathbb Z$, and non-negative integers $\ell, i$.  Hence, $\lbrace d'_i \rbrace$ is iterative on $R[y^{\pm 1}; \sigma']$.

Suppose that $\lbrace d_i \rbrace$ is locally nilpotent on $R$.  By Lemma \ref{extendlocnilp} we need only check that $\lbrace d'_i \rbrace$ is locally nilpotent on $R \cup \{y, y^{-1} \}$, a set of generators for $R[y^{\pm 1}; \sigma']$. This is clear because $d'_i(r) = d_i(r)$ for all $r \in R$, and $d'_i(y) = 0$ for all $i$ by construction.
\end{proof}

\begin{lemma}\label{switchingB}
Let 
\begin{align*}
A & =R[x_1;\tau_1, \delta_1][x_2;\tau_2, \delta_2] \dotsb [x_n;\tau_n, \delta_n][y;\sigma],\\
\widehat A & =R[x_1;\tau_1, \delta_1][x_2;\tau_2, \delta_2] \dotsb [x_n;\tau_n, \delta_n][y^{\pm 1};\sigma],
\end{align*}
where $\sigma(R)=R$, and for all $i \in \lbrace 1, \dotsc ,n \rbrace$, $\sigma(x_i)=\lambda_i x_i$ for some nonzero $\lambda_i \in k$.  Let
$A_j = R[x_1;\tau_1; \delta_1][x_2;\tau_2, \delta_2] \dotsb [x_j;\tau_j, \delta_j]$
for $j=1,2, \dotsc, n$, and $A_0=R$. 
 
{\rm (1)} Then 
\begin{align*}
A &= R[y;\sigma^*][x_1;\tau'_1, \delta'_1][x_2;\tau'_2, \delta'_2] \dotsb [x_n;\tau'_n, \delta'_n],\\
\widehat A &= R[y^{\pm 1};\sigma^*][x_1;\tau'_1, \delta'_1][x_2;\tau'_2, \delta'_2] \dotsb [x_n;\tau'_n, \delta'_n],
\end{align*}
where $\sigma^*=\sigma \big \vert_R$, $\tau'_i \big \vert_{A_j} =\tau_i$, $\delta'_i \big \vert_{A_j} =\delta_i$, $\tau'_i(y)=\lambda^{-1}_i y$, and $\delta'_i (y)=0$ for all $1 \le i \le n$ and $j \le i-1$.

{\rm (2)} If $(\tau_i,\delta_i)$ is $q_i$-skew for any $1 \le i \le n$, then $(\tau'_i, \delta'_i)$ is also $q_i$-skew.

{\rm (3)} Suppose that each $\delta_i$ extends to an h.$q_i$-s.$\tau_i$-d. $\lbrace d_{i,p} \rbrace_{p=0}^\infty$, and that $\sigma d_{i,p} = \lambda^p_i d_{i,p} \sigma$ on $A_{i-1}$ for all $i$ and $p$.  Then each $\delta'_i$ extends to a h.$q_i$-s.$\tau'_i$-d. $\lbrace d'_{i,p} \rbrace_{p=0}^\infty$ on the algebra $R\langle y, y^{-1}, x_1 , \dotsc, x_{i-1}    \rangle$, where $d'_{i,p}$ coincides with $d_{i,p}$ on $A_j$, for $j < i$, and $d'_{i,p}(y) = 0$ for $p \ge 1$. Moreover, $\{ d'_{i,p} \}$ restricts to a h.$q_i$-s.$\tau'_i$-d. on $R\langle y, x_1 , \dotsc, x_{i-1}    \rangle$.

\quad {\rm (a)} If $\{d_{i,p}\}$ is iterative for any $1 \le i \le n$, then $\{d'_{i,p}\}$ is iterative.
     
\quad {\rm (b)} If $\{d_{i,p}\}$ is locally nilpotent for any $1 \le i \le n$, then $\{d'_{i,p}\}$ is locally nilpotent.
\end{lemma}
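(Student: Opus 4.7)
The plan is to reduce everything to Lemma \ref{switchingA} by an induction on $n$, moving $y$ past one adjoined variable at a time, starting with $x_n$ and proceeding down to $x_1$.

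For part (1), the base case $n=1$ is precisely Lemma \ref{switchingA}(1). For the inductive step, write $A = A_{n-1}[x_n;\tau_n,\delta_n][y;\sigma]$ and apply Lemma \ref{switchingA}(1) with coefficient ring $A_{n-1}$. The required hypotheses are in hand: $\sigma(A_{n-1}) = A_{n-1}$ follows from $\sigma(R) = R$ and $\sigma(x_j) = \lambda_j x_j$ for $j < n$, and $\sigma(x_n) = \lambda_n x_n$ is given. This produces the rearrangement $A = A_{n-1}[y;\sigma'][x_n;\tau'_n,\delta'_n]$ with $\sigma' = \sigma|_{A_{n-1}}$ and the prescribed formulas for $\tau'_n, \delta'_n$. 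Now apply the inductive hypothesis to $A_{n-1}[y;\sigma']$ to move $y$ to the front past $x_{n-1}, \dotsc, x_1$; concatenating with the trailing $[x_n;\tau'_n,\delta'_n]$ factor finishes the induction. The Laurent case $\widehat A$ follows identically.

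For part (2), check $\delta'_i \tau'_i = q_i \tau'_i \delta'_i$ on generators: on $A_{i-1}$ both sides reduce to $\delta_i \tau_i = q_i \tau_i \delta_i$, and on $y$ both sides vanish since $\delta'_i(y) = 0$. This can also be read off from iterating Lemma \ref{switchingA}(2).

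For part (3), apply Lemma \ref{switchingA}(3) with coefficient ring $A_{i-1}$ to extend $\{d_{i,p}\}$ to a h.$q_i$-s.$\tau'_i$-d. $\{d'_{i,p}\}$ on $A_{i-1}[y^{\pm 1};\sigma|_{A_{i-1}}]$ with $d'_{i,p}(y) = 0$ for $p \ge 1$; the commutation $\sigma d_{i,p} = \lambda_i^p d_{i,p} \sigma$ on $A_{i-1}$ required by Lemma \ref{switchingA}(3) is supplied verbatim by the hypothesis. By the already-established part (1), the ring $A_{i-1}[y^{\pm 1};\sigma|_{A_{i-1}}]$ coincides with $R\langle y, y^{-1}, x_1, \dotsc, x_{i-1}\rangle$, giving the sought extension. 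The restriction to the subalgebra $R\langle y, x_1, \dotsc, x_{i-1}\rangle$ comes from the ``Moreover'' clause of Lemma \ref{switchingA}(3), since $d'_{i,p}(y) = 0$ guarantees no negative powers of $y$ are produced. Properties (a) iterativity and (b) local nilpotence are inherited directly from Lemma \ref{switchingA}(3)(a),(b) at the single step where $y$ is introduced, because no further $y$-moves occur once $y$ is adjoined to $A_{i-1}$.

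The main obstacle is purely bookkeeping: verifying that the formulas for $\tau'_i, \delta'_i$, and $d'_{i,p}$ yielded by successive applications of Lemma \ref{switchingA} really do match those prescribed in the statement (in particular, that $\tau'_i(y) = \lambda_i^{-1} y$ regardless of when the move past $x_i$ occurs in the induction, and that the action on each $A_j$ for $j < i$ is unaltered). Since all defining relations in skew polynomial rings are verified on a generating set, and the relevant extensions kill $y$ while agreeing with the unprimed operators on $A_{i-1}$, no new substantive computation is required beyond what Lemma \ref{switchingA} already provides.
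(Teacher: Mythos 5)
Your proof is correct and follows essentially the same strategy as the paper: induction on $n$, with Lemma \ref{switchingA} providing both the base case and the inductive step, moving $y$ past one adjoined variable at a time. The only notable divergence is in part (3): the paper also runs an induction on $n$ there (apply Lemma \ref{switchingA}(3) with coefficient ring $A_{n-1}$ to handle $\delta'_n$, then invoke the inductive hypothesis for the remaining $\delta'_i$), whereas you dispense with induction and, for each fixed $i$, apply Lemma \ref{switchingA}(3) once with coefficient ring $A_{i-1}$ to the subring $A_{i-1}[x_i;\tau_i,\delta_i][y^{\pm 1};\sigma]$, then identify $A_{i-1}[y^{\pm 1};\sigma|_{A_{i-1}}]$ with $R\langle y,y^{-1},x_1,\dotsc,x_{i-1}\rangle$ via part (1). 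This is a small streamlining that avoids having to carry the $q$-skew and higher-derivation data through the intermediate rings of the induction; it also makes the transfer of iterativity and local nilpotence in (a) and (b) more transparent, since each is verified in a single application of Lemma \ref{switchingA} rather than propagated through an inductive chain. Both routes are sound and rely on the same underlying mechanism.
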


\begin{proof}
(1) The condition $\sigma(x_i) = \lambda_i x_i$ for all $i$ implies that $\sigma(A_i) = A_i$.  We will use induction on $n$ to prove the result.

Lemma \ref{switchingA} proves the case $n=1$.  Suppose the result holds for all $m<n$, and consider $A = A_{n-1}[x_n; \tau_n, \delta_n][y;\sigma]$.  Application of Lemma \ref{switchingA}, and then the induction hypothesis, gives
\begin{align*}
A & = A_{n-1}[x_n; \tau_n, \delta_n][y;\sigma]\\
& = A_{n-1}[y; \sigma'][x_n; \tau'_n, \delta'_n]\\
& = R[x_1;\tau_1, \delta_1] \dotsb [x_{n-1}; \tau_{n-1}, \delta_{n-1}][y; \sigma'][x_n; \tau'_n, \delta'_n]\\
& = R[y; \sigma^*][x_1; \tau'_1, \delta'_1] \dotsb [x_n; \tau'_n, \delta'_n],
\end{align*}
with the desired conditions met by the automorphisms and derivations, completing the induction.  Similarly, $\widehat A = R[y^{\pm 1}; \sigma^*][x_1; \tau'_1, \delta'_1] \dotsb [x_n; \tau'_n, \delta'_n]$.

(2) Consider the two $\tau_i'$-derivations $\tau_i'^{-1} \delta_i' \tau_i'$ and $q_i \delta_i'$ on the ring 
\begin{equation*}
R[y^{\pm 1}; \sigma^*][x_1;\tau_1', \delta_1'] \dotsb [x_{i-1}; \tau_{i-1}', \delta_{i-1}']
\end{equation*}
for $1 \le i \le n$. Since $(\tau_i, \delta_i)$ is $q$-skew, it is clear that these two $\tau_i'$ derivations agree on $A_{i-1}$.  And since $\delta_i'(y) = 0$ for all $i = 1, \dotsc, n$, these two $\tau_i'$-derivations agree on a full set of generators of $R[y^{\pm 1}; \sigma^*][x_1;\tau_1', \delta_1'] \dotsb [x_{i-1}; \tau_{i-1}', \delta_{i-1}']$.  Hence, $\delta_i' \tau_i' = q_i \tau_i' \delta_i'$.

(3) Suppose the result holds for the algebra $R[x_1; \tau_1, \delta_1] \dotsb [x_{n-1}; \tau_{n-1}, \delta_{n-1}][y^{\pm 1}; \sigma]$.  Then Lemma \ref{switchingA} may be applied, with $A_{n-1}$ providing the coefficients, to get
\begin{equation*}
A_{n-1}[x_n; \tau_n, \delta_n][y^{\pm 1}; \sigma] = A_{n-1}[y^{\pm 1}; \sigma'][x_n; \tau'_n, \delta'_n],
\end{equation*}
where $\delta'_n$ extends to a h.$q_n$-s.$\tau'_n$-d. $\{d'_{n,p} \}$  on $A_{n-1}[y^{\pm 1}]$. The induction hypothesis gives the result.  
\end{proof}

\begin{definition}
{\rm For a $k$-algebra $A$ and $a,\, b \in A$, we say that $a$ and $b$} scalar commute {\rm if there is an element $\alpha \in k^{\times}$ such that $ab = \alpha ba$}.  {\rm We may also say that $a$ and $b$}\\ $\alpha$-commute.
\end{definition}

In the following two lemmas, we let $D$ denote the division ring of fractions for the noetherian domain $A$.  When comparing localizations of $A$, we identify them as subrings of $D$.

\begin{lemma}\label{localize}
Let $A$ be a noetherian domain, $S \subseteq A\setminus \{0\}$ an Ore set. Let $T$ be an Ore set in $AS^{-1} \setminus \{0\}$ with $S \subseteq T$. 

{\rm (1)} Then there exists an Ore set ${\widetilde T} \subseteq A \setminus \{0\}$ with $S \subseteq {\widetilde T}$ such that $A{\widetilde T}^{-1} = (AS^{-1})T^{-1}$.  

{\rm (2)}  Suppose A is a $k$-algebra and $S$ is generated by $s_1, \dotsc, s_n$ satisfying $s_i s_j = \gamma_{ij} s_j s_i$ for all $i, j$ and some $\gamma_{ij} \in k^{\times}$. Further suppose that $T$ is generated by $S \cup t$ for some $t \in AS^{-1}$ that satisfies $s_i t = \lambda_i ts_i$ for all $i$ and some $\lambda_i \in k^{\times}$.  Then there exist a cyclic Ore set $\widehat T \subseteq A\setminus \{0\}$ and an $(n+1)$-generator Ore set $\widehat S \subseteq A \setminus \{0\}$  such that $S \subseteq \widehat S$, and $(AS^{-1})T^{-1} = A {\widehat T}^{-1} = A {\widehat S}^{-1}$.

\end{lemma}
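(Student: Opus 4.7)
For part (1), I will work inside $D = \mathrm{Fract}\,A$, in which $B := (AS^{-1})T^{-1}$ sits as a subring, and set $\widetilde T := \{a \in A \setminus \{0\} : a^{-1} \in B\}$. This set is multiplicatively closed and contains $S$, since each $s \in S \subseteq T$ is already a unit of $B$. To show every $b \in B$ has the form $x v^{-1}$ with $x \in A$ and $v \in \widetilde T$, write $b = (as^{-1})t^{-1}$ with $a \in A,\,s \in S,\,t \in T$, and express $t = a's'^{-1}$ with $a' \in A,\,s' \in S$. Then $t^{-1} = s' a'^{-1}$, and since $t^{-1} \in B$ and $s' \in S$ is a unit of $B$, we have $a'^{-1} \in B$ and hence $a' \in \widetilde T$. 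Right Ore of $S$ in $A$ rewrites $s^{-1}s' = x'' s_0^{-1}$ with $x'' \in A,\,s_0 \in S$, giving $b = (ax'')(a's_0)^{-1}$, and $a's_0 \in \widetilde T$ since $(a's_0)^{-1} = s_0^{-1} a'^{-1} \in B$. The Ore conditions on $\widetilde T$ in $A$ then drop out for free: given $x \in A$ and $v \in \widetilde T$, the element $v^{-1}x \in B$ equals $x_1 w^{-1}$ for some $x_1 \in A,\,w \in \widetilde T$, giving $xw = vx_1$ (right Ore); the left Ore condition follows symmetrically from $B = \widetilde T^{-1}A$. Hence $A\widetilde T^{-1} = B$.

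For part (2), apply part (1) to obtain $\widetilde T$, then choose $a \in A$ and $s = s_1^{e_1}\cdots s_n^{e_n} \in S$ with $t = as^{-1}$. Both $t$ and $s$ are units of $B$, so $a = ts \in \widetilde T$. A direct computation using $s_i t = \lambda_i ts_i$ and $s_is_j = \gamma_{ij}s_js_i$ shows that $a$ scalar-commutes with each $s_i$, so $\{s_1,\ldots,s_n,a\}$ pairwise scalar-commute. Define
\[
\widehat S = \text{multiplicative set generated by } s_1,\ldots,s_n,a, \qquad u = s_1 s_2 \cdots s_n a, \qquad \widehat T = \{u^N : N \ge 0\};
\]
both lie in $\widetilde T$, with $\widehat T \subseteq \widehat S$.

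The crux is the normal-form claim that every $b \in B$ equals $xu^{-N}$ for some $x \in A,\,N \ge 0$ (and symmetrically $u^{-N}y$). Parametrise $b = z\sigma^{-1}\tau^{-1}$ with $z \in A,\,\sigma \in S,\,\tau \in T$. Since $t$ scalar-commutes with $S$, every $\tau \in T$ is a scalar multiple of $t^m s_\tau$ for some $m \ge 0,\,s_\tau \in S$, and from $t^{-1} = sa^{-1}$ together with $a^{-1}s = \text{(scalar)}\cdot sa^{-1}$ one gets that $t^{-m}$ is a scalar multiple of $s^m a^{-m}$. Thus $\sigma^{-1} s_\tau^{-1} s^m a^{-m}$ is a Laurent monomial of the shape $\mathrm{(scalar)}\cdot s_1^{g_1}\cdots s_n^{g_n} a^h$ in the scalar-commuting generators. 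Choosing $N \ge \max\{-g_1,\ldots,-g_n,-h,0\}$ and using that $u^N$ is a scalar multiple of $s_1^N\cdots s_n^N a^N$, we rewrite the monomial as $\mathrm{(scalar)}\cdot p\, u^{-N}$ with $p = s_1^{g_1+N}\cdots s_n^{g_n+N}a^{h+N} \in A$. Therefore $b = \mathrm{(scalar)}\cdot zp \cdot u^{-N}$ belongs to $A u^{-N}$. The symmetric right-to-left parametrisation $b = \tau^{-1}\sigma^{-1}z'$ yields the dual form $b = u^{-N}y$ with $y \in A$.

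Granted the normal form, the remainder is mechanical. For right Ore of $\widehat T$: given $x \in A$ and $u^m$, write $u^{-m}x = yu^{-N}$ to obtain $xu^N = u^m y$; the left Ore condition uses the dual normal form. Since the normal form exhibits $B$ exactly as $\{xu^{-N} : x \in A,\,N \ge 0\}$, we have $A\widehat T^{-1} = B$. The same step with $\sigma \in \widehat S$ produces $xu^N = \sigma y$ with $u^N \in \widehat T \subseteq \widehat S$, proving $\widehat S$ is Ore; and $A\widehat S^{-1} = B$ follows from $A\widehat T^{-1} \subseteq A\widehat S^{-1}$ together with the fact that every generator of $\widehat S$ is already a unit of $B$. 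The main obstacle is the reordering in the normal-form claim: the scalar factors arising from commuting the $s_i$ past one another and past $a$ must be tracked carefully enough to guarantee that, after the shift by $N$, the monomial $p$ has all exponents non-negative and thus lies in $A$.
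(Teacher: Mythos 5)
Your proof is correct and follows essentially the same approach as the paper's. The only notable difference is in part (1): the paper sets $\widetilde T = T \cap A$ and verifies the Ore condition for it directly from the Ore condition for $T$ in $AS^{-1}$, whereas you take the saturation $\widetilde T = \{a \in A \setminus \{0\} : a^{-1} \in B\}$ and derive the Ore condition from the representation $B = A\widetilde T^{-1}$; both yield the same localization, and part (2) is in both treatments the same argument of clearing the denominator of $t$ to obtain an element $a \in A$ that scalar-commutes with the $s_i$ and then extracting the normal form.
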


\begin{proof}(1)  Consider $T \cap A$, the subset in $T$ of elements with a denominator of 1.  Clearly, this is a multiplicative set in $A$ which contains $S$.  Set ${\widetilde T} = T \cap A$.     Let $a \in {\widetilde T}$ and $\alpha \in A$.    Then $a \in T$, and since $\alpha \in AS^{-1}$, there exist $b' \in T$ and $\beta' \in AS^{-1}$ such that $a \beta' = \alpha b'$.  By \cite[10.2]{bluebook}, there exist $y \in S$, and $b, \beta \in A$ such that $\beta' = \beta y^{-1}$ and $b' = by^{-1}$; hence, $a \beta y^{-1} = \alpha b y^{-1}$ in $AS^{-1}$.  It follows that $a \beta = \alpha b$ in $A$.  So ${\widetilde T}$ satisfies the right Ore condition in $A$, and the left Ore condition by symmetry.  By the universal property, $A{\widetilde T}^{-1} \cong (AS^{-1})T^{-1}$.  As subrings of $D$, we have $A{\widetilde T}^{-1} = (AS^{-1})T^{-1}$.

(2) The generating element $t$ has the form $t = \bar a (s_1^{m_1} s_2^{m_2} \dotsb s_n^{m_n})^{-1}$ for some $m_i \in \mathbb N$, and $\bar a \in A$.  For any $s_i \in S$, we have 
\begin{equation*}
s_i \bar a (s_1^{m_1} s_2^{m_2} \dotsb s_n^{m_n})^{-1}  = \lambda_i \bar a (s_1^{m_1} s_2^{m_2} \dotsb s_n^{m_n})^{-1} s_i = \mu \lambda_i \bar a s_i (s_1^{m_1} s_2^{m_2} \dotsb s_n^{m_n})^{-1},
\end{equation*}
where $\mu$ is a product of powers of the $\gamma_{ij}$.  So $\bar a$ scalar commutes with the generators of $S$ via the relations $s_i \bar a = \mu \lambda_i \bar a  s_i$.  Let $\widehat S$ be the multiplicative set generated by $\bar a, s_1, \dotsc, s_n$ in $A$, and $\widehat T$ the multiplicative set generated by $\bar a s_1 s_2 \dotsb s_n$ in $A$. 
Recall that $(AS^{-1})T^{-1} = A{\widetilde T}^{-1}$, where $\widetilde T = T \cap A$ from part (1).  From the scalar commuting relations it follows that any element $a {\tilde t}^{-1} \in A {\widetilde T}^{-1}$ may be written in the form $b(\bar a s_1, \dotsb s_n)^{-m}$ for some $m \in \mathbb N \cup \{0\}, \; b \in A$, or the form $c{\bar a}^{-\ell_{n+1}} s_1^{-\ell_1} \dotsb s_n^{-\ell_n}$, for $\ell_j \in \mathbb N \cup \{0\}, \; c \in A$.  So we conclude that $\widehat S$ and $\widehat T$ are Ore sets in $A$ and that  $(AS^{-1})T^{-1} = A {\widehat T}^{-1} = A {\widehat S}^{-1}$.
\end{proof}

\begin{lemma}\label{localizeB}
Let $A$ be a noetherian domain, $S_1 \subseteq A\setminus \{0\}$ an Ore set, and for  integers $j = 2, \dotsc, n$ let $S_j$ be an Ore set in $((AS_1^{-1}) \dotsb )S_{j-1}^{-1}\setminus \{0\}$ with $S_{j-1} \subseteq S_j$. 

 {\rm (1)} Then there exists an Ore set $T \subseteq A\setminus \{0\}$ such that $AT^{-1} = (((AS_1^{-1})S_2^{-1}) \dotsb )S_n^{-1}$. 

{\rm (2)} Suppose $A$ is a $k$-algebra, $S_1$ is generated by $s_1$, and for $j = 2, \dotsc, n$, $S_j$ is generated by $S_{j-1} \cup \{s_j\}$, where $s_i s_j = \gamma_{ij} s_j s_i$ for some multiplicatively antisymmetric matrix $(\gamma_{ij}) \in M_n(k^{\times})$.  Then there are a cyclic Ore set $\widehat T \subseteq A$ and an $n$-generator Ore set $\widehat S \subseteq A$ such that $S_1 \subseteq \widehat S$, and $((AS_1^{-1}) S_2^{-1} ) \dotsb S_n^{-1} = A{\widehat T}^{-1} = A{\widehat S}^{-1}$.
\end{lemma}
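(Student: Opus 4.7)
Both parts will be proved by induction on $n$, using the two cases of Lemma \ref{localize} at each inductive step. The base case $n=1$ is trivial: take $T=S_1$ in part (1), and $\widehat T=\widehat S = S_1$ in part (2).

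For part (1), suppose inductively that we have an Ore set $T' \subseteq A\setminus\{0\}$ with $AT'^{-1}$ equal to the iterated localization through $S_{n-1}$, and with $S_{n-1}\subseteq T'$ (a condition propagated through the induction by the $S\subseteq \widetilde T$ inclusion in Lemma \ref{localize}(1)). Let $T^{\#}$ be the multiplicative set generated by $T' \cup S_n$ in $AT'^{-1}$. Because every element of $T'$ is already a unit in $AT'^{-1}$, the set $T^{\#}$ is Ore in $AT'^{-1}$, contains $T'$, and satisfies $(AT'^{-1})(T^{\#})^{-1} = (AT'^{-1})S_n^{-1}$. Applying Lemma \ref{localize}(1) to the pair $(T', T^{\#})$ then yields an Ore set $T \subseteq A\setminus\{0\}$ with $T'\subseteq T$ and $AT^{-1}$ equal to the full iterated localization, completing the induction.

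For part (2), I plan to strengthen the inductive hypothesis so that Lemma \ref{localize}(2) is directly applicable at each step: at stage $n$, I assert the existence of pairwise scalar commuting elements $\hat s_1,\dots,\hat s_n\in A\setminus\{0\}$, with $\hat s_1=s_1$, such that each original $s_j$ (for $j\le n$) can be expressed as $s_j = \hat s_j\,\hat s_{j-1}^{-m_{j,j-1}}\cdots\hat s_1^{-m_{j,1}}$ in the appropriate localization. With these in hand, $\widehat S_n=\langle \hat s_1,\dots,\hat s_n\rangle$ and $\widehat T_n=\langle\hat s_1\hat s_2\cdots\hat s_n\rangle$ meet the demands of the lemma (the product $\hat s_1\cdots\hat s_n$ is a normal element of $A$ by the scalar commuting, hence generates a cyclic Ore set). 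To produce $\hat s_n$ from $\hat s_1,\dots,\hat s_{n-1}$, I apply Lemma \ref{localize}(2) with $s_n$ playing the role of $t$ and the $\hat s_i$ playing the role of the generators $s_i$; the new generator $\hat s_n$ supplied by that lemma lies in $A$, scalar commutes with all $\hat s_i$ for $i<n$, and satisfies the required expression for $s_n$, so the strengthened hypothesis is preserved.

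The main obstacle is the hypothesis of Lemma \ref{localize}(2) that $s_n$ scalar commutes in $A\widehat S_{n-1}^{-1}$ with each generator $\hat s_i$ for $i<n$. This will be handled by a secondary induction on $i$. Starting from the given relation $s_i s_n = \gamma_{in} s_n s_i$, I expand $s_i = \hat s_i\,\hat s_{i-1}^{-m_{i,i-1}}\cdots\hat s_1^{-m_{i,1}}$ and use the secondary inductive hypothesis (that $s_n$ scalar commutes with $\hat s_j$ for $j<i$) together with the pairwise scalar commuting of the $\hat s_j$ to move each $\hat s_j^{-m_{i,j}}$ past $s_n$, acquiring a product of scalars in $k^{\times}$. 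Solving for the commuting relation between $\hat s_i$ and $s_n$ yields $\hat s_i s_n = \lambda_i s_n \hat s_i$ for some $\lambda_i\in k^{\times}$, as required. Once this scalar commuting is established, Lemma \ref{localize}(2) completes the inductive step; the bookkeeping of how the scalars $\gamma_{ij}$ and the commuting scalars among the $\hat s_j$ compound through this process is the only delicate point.
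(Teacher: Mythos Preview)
Your proof is correct and follows essentially the same inductive scheme as the paper: both parts are proved by induction on $n$, invoking the two parts of Lemma~\ref{localize} at each step. Your treatment is in fact a bit more careful than the paper's in two places: in part~(1) you explicitly enlarge $S_n$ to $T^{\#}=\langle T'\cup S_n\rangle$ so that the containment hypothesis $S\subseteq T$ of Lemma~\ref{localize}(1) is visibly satisfied (the paper applies that lemma without checking $T_{n-1}\subseteq T_n$), and in part~(2) you spell out via a secondary induction why $s_n$ scalar commutes with each $\hat s_i$, which the paper dismisses as ``routine calculations.'' One small inaccuracy: your parenthetical that $\hat s_1\cdots\hat s_n$ is normal in $A$ ``by the scalar commuting'' is not justified, since the $\hat s_i$ only scalar commute with one another, not with all of $A$; but this is harmless, as the Ore property of $\widehat T_n$ is already delivered by Lemma~\ref{localize}(2).
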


\begin{proof}(1) The proof proceeds by induction on $n$. The case $n=1$ is covered in the lemma above.  Suppose that for all $j \le n-1$ there exists an Ore set $T_j \subseteq A\setminus \{0\}$ such that $AT_j^{-1} = (((AS_1^{-1})S_2^{-1}) \dotsb )S_j^{-1}$. Then the equality 
\begin{equation*}
AT_{n-1}^{-1} = (((AS_1^{-1})S_2^{-1}) \dotsb )S_{n-1}^{-1}
\end{equation*}
identifies an Ore set $T_n \subseteq AT_{n-1}^{-1}\setminus \{0\}$ such that 
\begin{equation*}
(AT_{n-1}^{-1})T_n^{-1} = (((AS_1^{-1})S_2^{-1}) \dotsb S_{n-1}^{-1})S_n^{-1}.
\end{equation*}
Furthermore, Lemma \ref{localize} implies the existence of an Ore set $T \subseteq A\setminus \{0\}$ such that $AT^{-1} = (AT_{n-1}^{-1})T_n^{-1} = (((AS_1^{-1})S_2^{-1}) \dotsb S_{n-1}^{-1})S_n^{-1}$. 

(2) Suppose, inductively, that there exist
\begin{itemize}
\item[] (i) a cyclic Ore set ${\widehat T}_{n-1} \subseteq A \setminus \{0\}$ generated by $s_1 {\bar a}_2 \dotsb {\bar a}_{n-1}$  
\item[] (ii) an $(n-1)$-generator Ore set ${\widehat S}_{n-1} \subseteq A \setminus \{0\}$ with $S_1 \subseteq \widehat S_{n-1}$ and generators $s_1,\; {\bar a}_2,\; {\bar a}_3, \dotsc,\; {\bar a}_{n-1}$
\item[] (iii) the ${\bar a}_i$ scalar commute with $s_1$ and with each other  
\item[] (iv) $((AS_1^{-1}) S_2^{-1}) \dotsb S_{n-1}^{-1} = A{\widehat T}_{n-1}^{-1} = A{\widehat S}_{n-1}^{-1}$ as subrings of $D$. 
\end{itemize} 
Then $s_n = {\bar a}_n(s_1{\bar a}_2 \dotsb {\bar a}_{n-1})^{-r}$ for some ${\bar a}_n \in A$ and $r \in \mathbb N$.  Using the relations \linebreak[4]$s_i s_j = \gamma_{ij} s_j s_i$, routine calculations show that the ${\bar a}_i$ scalar commute with the $s_j$, and also with each other, for all $i, j$.  Let $\widehat T$ be the multiplicative set generated by $s_1{\bar a}_2 \dotsb {\bar a}_n$, and let $\widehat S$ be the multiplicative set generated by $s_1,\; {\bar a}_2,\; {\bar a}_3, \dotsc,\; {\bar a}_n$.  Then $((AS_1^{-1})S_2^{-1}) \dotsb S_n^{-1} = (A{\widehat T}_{n-1}^{-1} )S_n^{-1} = AT^{-1}$ from part (1).  Using Lemma \ref{localize}, we conclude that $\widehat T$ and $\widehat S$ are Ore sets in $A$ and that $AT^{-1} = A{\widehat T}^{-1} = A{\widehat S}^{-1}$.
\end{proof}

In the proof of the main theorem, we will use without mention the facts gathered here.  For greater details on these statements, see \cite[10X, 10Y]{bluebook} and \cite[1.4]{primesprqwa}.  
\begin{itemize}
\item[(1)] Given a noetherian ring $A$ and a normal element $x \in A$, the multiplicative set generated by $x$ is an Ore set.
\item[(2)] The multiplicative set generated by a nonempty family of right Ore sets is right Ore.
\item[(3)] Let $A = R[x;\tau, \delta]$, and $S$ a right denominator set in $R$ such that $\tau(S) = S$.  Then $S$ is a right denominator set in $A$ and the identity map on $AS^{-1}$ extends to an isomorphism of $AS^{-1}$ onto $(RS^{-1})[x; \tau, \delta]$ sending $x1^{-1}$ to $x$.  Note that if $A$ is a $k$-algebra, $\tau$, $\delta$ are $k$-linear, and $\tau(k^{\times}S) = k^{\times}S$, then the result holds because $S$ is a denominator set if and only if $k^{\times}S$ is a denominator set.
\end{itemize}

\begin{theorem}\label{mainthm}
Let $R$ be a $k$-algebra and noetherian domain, 
\begin{equation*}
A = R[x_1 ; \tau_1, \delta_1] \dotsb [x_n; \tau_n, \delta_n],\\
\end{equation*}
where each $\tau_i$ is a $k$-linear automorphism of $R \langle x_i, \dotsc, x_{i-1} \rangle$ such that $\tau_i(x_j) = \lambda_{ij} x_j$ for all $i, j$ with $1 \le j < i \le n$ and some $\lambda_{ij} \in k^{\times}$, and where each $\delta_i$ is a  $k$-linear $\tau_i$-derivation.  Assume that there exist elements $q_i \in k^{\times}$ with $q_i \ne 1$ such that $\delta_i \tau_i = q_i \tau_i \delta_i$, and that $\delta_i$ extends to a locally nilpotent, iterative h.$q_i$-s.$\tau_i$-d. on $R \langle x_i, \dotsc, x_{i-1} \rangle$ for $i=1, \dotsc, n$. 

{\rm (1)} Then there exists an Ore set $T \subseteq A$ generated by $n$ elements of $A$ such that 
\begin{equation*}
AT^{-1} \cong R[y_1^{\pm 1}; \tau_1][y_2^{\pm 1}; \tau_2'] \dotsb [y_n^{\pm 1}; \tau_n']
\end{equation*}
where $\tau_i' \vert_R = \tau_i$ and $\tau_i'(y_j) = \lambda_{ij}y_j$ for all $i, j$ with $1 \le j < i \le n$

{\rm (2)} There is PI degree parity between $A$ and $R[y_1; \tau_1][y_2; \tau_2'] \dotsb [y_n; \tau_n']$.  Moreover, these algebras have isomorphic division rings of fractions.
\end{theorem}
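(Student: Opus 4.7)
The plan is to induct on $n$. The base case $n=1$ is exactly Theorem \ref{summary}: the Ore set generated by $x_1$ is cyclic, $AS^{-1} \cong R[y_1^{\pm 1};\tau_1]$, and the induced isomorphism of fraction fields with $R[y_1;\tau_1]$ is built in.

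For the inductive step, write $A = B[x_n;\tau_n,\delta_n]$ with $B = R[x_1;\tau_1,\delta_1]\dotsb[x_{n-1};\tau_{n-1},\delta_{n-1}]$. First apply Theorem \ref{summary} with coefficient ring $B$ to produce a cyclic Ore set $S \subseteq A$ (generated by $x_n$) with $AS^{-1} \cong B[y_n^{\pm 1};\tau_n]$, and then use Lemma \ref{switchingB} to rewrite the right-hand side as
\[
R[y_n^{\pm 1};\tau_n|_R][x_1;\tau_1',\delta_1']\dotsb[x_{n-1};\tau_{n-1}',\delta_{n-1}'],
\]
with each $\delta_i'$ extending to a locally nilpotent iterative h.$q_i$-s.$\tau_i'$-derivation. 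Applying Lemma \ref{switchingB}(3) requires the compatibility relation $\tau_n d_{i,p} = \lambda_{ni}^{p} d_{i,p} \tau_n$ on $R\langle x_1,\dotsc,x_{i-1}\rangle$ for each higher $\tau_i$-derivation $\{d_{i,p}\}$; this needs to be verified from the diagonal action of $\tau_n$ on the $x_j$ together with the $q_i$-skew condition.

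The rearranged ring is an $(n-1)$-variable iterated skew polynomial ring over the $k$-algebra noetherian domain $R[y_n^{\pm 1};\tau_n|_R]$ and satisfies the hypotheses of the theorem, so the induction hypothesis yields an $(n-1)$-generator Ore set whose localization is isomorphic to
\[
R[y_n^{\pm 1};\tau_n|_R][y_1^{\pm 1};\tau_1^{(n)}][y_2^{\pm 1};\tau_2^{(n)}]\dotsb[y_{n-1}^{\pm 1};\tau_{n-1}^{(n)}],
\]
a derivation-free iterated Ore extension in which all adjoined variables are invertible and pairwise scalar-commute. Combine the two localizations using Lemma \ref{localizeB} to produce an $n$-generator Ore set $T \subseteq A$ with $AT^{-1}$ equal to this iterated localization. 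A final cosmetic reordering of the $y_i^{\pm 1}$---a derivation-free application of the switching procedure, valid because all adjoined generators are invertible and scalar-commute---brings the expression to the form $R[y_1^{\pm 1};\tau_1][y_2^{\pm 1};\tau_2']\dotsb[y_n^{\pm 1};\tau_n']$ prescribed in part (1).

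Part (2) is immediate from part (1): setting $A'' = R[y_1;\tau_1][y_2;\tau_2']\dotsb[y_n;\tau_n']$, the ring $AT^{-1}$ is also the localization of $A''$ at the Ore set generated by $y_1,\dotsc,y_n$, so ${\rm Fract}\, A \cong {\rm Fract}\, A''$, and PI degree parity follows because the PI degree of a prime noetherian ring agrees with that of its Goldie quotient. The main obstacle is the bookkeeping in the inductive step: one must verify the compatibility hypothesis of Lemma \ref{switchingB}(3) for the higher derivations after each application of Theorem \ref{summary}, and check that the scalar-commuting and $q_i$-skew hypotheses on the rearranged $(n-1)$-variable ring are preserved so that the induction hypothesis remains available.
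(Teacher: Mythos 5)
Your proposal uses the same key ingredients as the paper (the derivation-removing homomorphism of Theorem \ref{summary}, the rearranging Lemma \ref{switchingB}, and the localization-combining Lemma \ref{localizeB}), but organizes the induction in the opposite order. You peel off the outermost variable $x_n$ first via Theorem \ref{summary} (with $B=R[x_1]\dotsb[x_{n-1}]$ as coefficients), push $y_n^{\pm 1}$ to the inside via Lemma \ref{switchingB}, and then invoke the inductive hypothesis for the remaining $(n-1)$-variable ring over the enlarged coefficient ring $R[y_n^{\pm 1};\tau_n|_R]$. The paper instead phrases its inductive hypothesis as already having cleared the derivations from $x_2,\dotsc,x_n$ (in effect invoking the theorem for the $(n-1)$-variable extension $R[x_1;\tau_1,\delta_1][x_2]\dotsb[x_n]$ with coefficient ring $R[x_1;\tau_1,\delta_1]$), and then deals with $x_1$ last. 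Your arrangement is arguably a little cleaner, since the derivation-removing homomorphism applies directly to the outermost variable without any preliminary rearrangement, and the new coefficient ring $R[y_n^{\pm 1}]$ has no derivation attached; the paper's version requires verifying that $R[x_1;\tau_1,\delta_1]$ still qualifies as a noetherian-domain coefficient ring, which is also routine. Both proofs reduce to the same lemmas and yield the same Ore set.

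There is one genuine subtlety which you correctly flag but, like the paper, do not fully close. Lemma \ref{switchingB}(3) requires the compatibility $\sigma d_{i,p}=\lambda_{i}^{\,p}d_{i,p}\sigma$ (here $\sigma=\tau_n$ and $\lambda_i=\lambda_{ni}$), and this is not among the stated hypotheses of Theorem \ref{mainthm}. For $p=1$ it follows automatically from the well-definedness of the iterated skew polynomial ring: applying $\tau_n$ to the defining relation $x_ir=\tau_i(r)x_i+\delta_i(r)$ forces $\tau_n\delta_i=\lambda_{ni}\delta_i\tau_n$ on $R\langle x_1,\dotsc,x_{i-1}\rangle$. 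When $q_i$ is not a root of unity, $d_{i,p}=\delta_i^{\,p}/(p)!_{q_i}$ and the general case follows by iterating. But when $q_i$ is a root of unity, $(p)!_{q_i}$ can vanish and $d_{i,p}$ is not determined by $\delta_i$ alone, so the compatibility for $p\geq 2$ is an additional assumption rather than a consequence; it holds for higher derivations obtained via Theorem \ref{specialize} (since the relation lifts to the $k[t^{\pm 1}]$-algebra and then specializes), but the theorem as stated allows arbitrary iterative higher derivations. The paper's proof simply says ``Observe that by Lemmas \ref{switchingB} and \ref{extendlocnilp}\dots'' without checking this hypothesis. You are on the same footing; you at least note that a verification is needed, though the phrase ``from the diagonal action of $\tau_n$ together with the $q_i$-skew condition'' slightly understates the issue in the root-of-unity case. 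Part (2) of your argument is fine: $AT^{-1}$ is a common localization of $A$ and of $R[y_1;\tau_1]\dotsb[y_n;\tau_n']$ at regular elements, so the PI property, PI degree, and Goldie quotients agree.
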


\begin{proof} (a) Suppose, inductively, that we have 
\begin{equation*}
R[x_1 ; \tau_1, \delta_1] [y_2^{\pm 1}; \tau_2] \dotsb [y_n^{\pm 1}; \tau_n'] \cong AS_2^{-1}
\end{equation*}
where the restriction of $\tau_i'$ to $R \langle x_1 \rangle $ coincides with $\tau_i$,  $\tau_i'(y_m) = \lambda_{im} y_m$ for $2 \le i \le n$ and $1 < m < i$, and $S_2$ is an Ore set in $A$ generated by $n-1$ elements from $A$.  Then by Lemma \ref{switchingB}
\begin{equation}\label{locisom}
AS_2^{-1} \cong R[y_2^{\pm 1};\tau_2''] \dotsb [y_n^{\pm 1}; \tau_n''][x_1; \tau_1', \delta_1'] 
\end{equation}
where the restrictions of $\tau_1'$ and $\delta_1'$ to $R$ coincide with $\tau_1$ and $\delta_1$, $\tau'_1(y_j) = \lambda_{j1}^{-1}y_j$, $\delta_1'(y_j) = 0$, and $\tau_i''$ coincides with the restriction of $\tau_i$ to $R \langle y_2, \dotsc, y_{i-1} \rangle$ for $2 \le i \le n$.  Observe that by Lemmas \ref{switchingB} and \ref{extendlocnilp} we also have $\delta_1' \tau_1' = q_1 \tau_1' \delta_1'$, and that $\delta_1'$ extends to a locally nilpotent iterative h.$q_1$-s.$\tau$-d. on $R \langle y_2^{\pm 1}, \dotsc, y_n^{\pm 1} \rangle$.  Then applying the derivation removing homomorphism to the right hand side of $\eqref{locisom}$ gives an isomorphism
\begin{equation*}
(AS_2^{-1})T_1^{-1} \cong R[y_2^{\pm 1}; \tau_2'] \dotsb [y_n^{\pm 1}; \tau_n'][y_1^{\pm 1}; \tau_1']
\end{equation*}
where $T_1 \subseteq AS_2^{-1}$ is an Ore set generated by one element of $AS_2^{-1}$.  Then Lemma \ref{localizeB} and a reordering of variables shows the existence of an Ore set $T \subseteq A$, generated by $n$ elements of $A$, such that $AT^{-1} \cong R[y_1^{\pm 1}; \tau_1][y_2^{\pm 1}; \tau_2'] \dotsb [y_n^{\pm 1}; \tau_n']$.

(2) This follows from part (1).
\end{proof}   

\begin{corollary}\label{qaffine}
Let $A = k[x_1 ; \tau_1, \delta_1] \dotsb [x_n; \tau_n, \delta_n]$ with the hypotheses as in Theorem  {\rm{\ref{mainthm}}}.  Set ${\boldsymbol \lambda} = (\lambda_{ij})$.  Then

{\rm (1)} $A$ and $\mathcal O_{\boldsymbol \lambda} (k^n)$ have isomorphic division rings of fractions.

{\rm (2)} $A$ is a PI-algebra if and only if all the $\lambda_{ij}$ are roots of unity, in which case $A$ and $\mathcal O_{\boldsymbol \lambda} (k^n)$ have the same PI degree.
\end{corollary}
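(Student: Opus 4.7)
The plan is to read off the corollary as a direct specialization of Theorem \ref{mainthm} combined with Theorem \ref{dp}. Setting $R = k$ places us squarely in the setup of Theorem \ref{mainthm}, so that theorem produces an Ore set $T \subseteq A$ with
\begin{equation*}
AT^{-1} \cong k[y_1^{\pm 1}; \tau_1][y_2^{\pm 1}; \tau_2'] \dotsb [y_n^{\pm 1}; \tau_n'],
\end{equation*}
where $\tau_i'(y_j) = \lambda_{ij}y_j$ for $1 \le j < i \le n$. Part (2) of Theorem \ref{mainthm} further gives PI degree parity, and an isomorphism of division rings of fractions, between $A$ and the non-inverted iterated skew polynomial ring $B := k[y_1; \tau_1][y_2; \tau_2'] \dotsb [y_n; \tau_n']$.

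The next step is to identify $B$ with $\mathcal O_{\boldsymbol \lambda}(k^n)$. Because every $\tau_i$ in Theorem \ref{mainthm} is a $k$-algebra automorphism, each $\tau_i'$ acts trivially on $k$; in particular $\tau_1 = \mathrm{id}_k$, so the first factor is just the polynomial ring $k[y_1]$. Extend the array $(\lambda_{ij})_{j < i}$ to a multiplicatively antisymmetric $n \times n$ matrix $\boldsymbol \lambda$ by setting $\lambda_{ii} = 1$ and $\lambda_{ji} = \lambda_{ij}^{-1}$ for $j < i$. Then in $B$ the defining relation of the $i$-th extension is $y_i y_j = \tau_i'(y_j) y_i = \lambda_{ij} y_j y_i$ for $j < i$, equivalently $y_j y_i = \lambda_{ji} y_i y_j$. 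Invoking the universal property of $\mathcal O_{\boldsymbol \lambda}(k^n)$ against these relations, and conversely the universal property of iterated Ore extensions against the obvious assignment $y_i \mapsto x_i$ in $\mathcal O_{\boldsymbol \lambda}(k^n)$, yields mutually inverse $k$-algebra homomorphisms. Hence $B \cong \mathcal O_{\boldsymbol \lambda}(k^n)$.

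With this identification, part (1) of the corollary is immediate: $\mathrm{Fract}\, A \cong \mathrm{Fract}\, B \cong \mathrm{Fract}\,\mathcal O_{\boldsymbol \lambda}(k^n)$. For part (2), I would combine the PI degree parity established in Theorem \ref{mainthm} with the De Concini--Procesi Theorem \ref{dp}(1): $\mathcal O_{\boldsymbol \lambda}(k^n)$ is PI if and only if every $\lambda_{ij}$ is a root of unity, and in that case $\mathrm{PIdeg}\, A = \mathrm{PIdeg}\, B = \mathrm{PIdeg}\,\mathcal O_{\boldsymbol \lambda}(k^n)$.

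Nothing in this argument is genuinely hard; the only point requiring care is the identification $B \cong \mathcal O_{\boldsymbol \lambda}(k^n)$, and there the only mild subtlety is the bookkeeping used to extend $(\lambda_{ij})_{j<i}$ to a full multiplicatively antisymmetric matrix so that Theorem \ref{dp} applies as stated. Everything else is a direct application of results already proved.
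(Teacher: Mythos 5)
Your proof is correct and matches the paper's approach; the paper states this corollary without an explicit proof precisely because, as you observe, it is the immediate specialization $R = k$ of Theorem \ref{mainthm} together with the identification $k[y_1;\tau_1][y_2;\tau_2']\dotsb[y_n;\tau_n'] \cong \mathcal O_{\boldsymbol\lambda}(k^n)$ (using $\lambda_{ii}=1$, $\lambda_{ji}=\lambda_{ij}^{-1}$) and Theorem \ref{dp}(1). The one point you flag as requiring care---extending $(\lambda_{ij})_{j<i}$ to a multiplicatively antisymmetric matrix---is indeed the convention the paper uses (see the definition of the matrix of relations in Section 5), so there is no gap.
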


In general, identification of the generators for the Ore set $T$ in Theorem \ref{mainthm} is very cumbersome.  To illustrate the computations on a fairly short iterated skew polynomial ring, we consider the multiparameter second quantized Weyl algebra $A_2^{Q,\Gamma}(k)$.  Here, $Q = (q_1, q_2) \in (k^{\times})^2$, $q_i \ne 1$ for all $i$, and $\Gamma = (\gamma_{ij}) \in M_2(k^{\times})$ with $\gamma_{ii} = 1$ and $\gamma_{21} = \gamma_{12}^{-1}$.   The algebra $A_2^{Q,\Gamma}(k)$ may be presented as an iterated skew polynomial ring of the form $k[y_1][x_1; \tau_2, \delta_2][y_2; \tau_3][x_2; \tau_4, \delta_4]$, where the $\tau_i$ are $k$-linear automorphisms and the $\delta_{2i}$ are $k$-linear $\tau_{2i}$-derivations such that
\begin{align*}
& \tau_2(y_1) = q_1 y_1, \qquad  \qquad  \delta_2(y_1) = 1\\
& \tau_3(y_1) = \gamma_{12}^{-1} y_1\\
& \tau_3(x_1) = \gamma_{12} x_1\\
& \tau_4(y_1) = q_1 \gamma_{12} y_1, \qquad  \quad \delta_4(y_1) = 0\\
& \tau_4(x_1) = q_1^{-1} \gamma_{21} x_1, \qquad \delta_4(x_1) = 0\\
& \tau_4(y_2) = q_2 y_2, \qquad \qquad \delta_4(y_2) = (q_1-1)y_1 x_1 + 1.
\end{align*}
For greater detail about this algebra, the reader is referred to \cite{Alev}, \cite{jordan}, \cite{catenarity}, and \cite{DMequiv}.  Routine computations show that the pair $(\tau_2, \delta_2)$ is a $q_1$-skew derivation and that $(\tau_4, \delta_4)$ is a $q_2$-skew derivation.  To show that $\delta_2$ and $\delta_4$ are locally nilpotent, it suffices to check for local nilpotence on a set of generators.  Given their definitions, this is accomplished by verifying their action on powers of $y_1$ and $y_2$:
\begin{equation*}
\delta_2^i (y_1^n) = \begin{cases}
\frac{(n)!_{q_1}}{(n-i)!_{q_1}} y_1^{n-i} \qquad i \le n\\
0 \qquad \qquad \qquad i>n
\end{cases}
\end{equation*}
\begin{equation*}
\delta_4^i(y_2^n) = \begin{cases}
\frac{(n)!_{q_2}}{(n-i)!_{q_2}}[\delta_4(y_2)]^i y_2^{n-i} \qquad i \le n\\
0 \qquad \qquad \qquad \qquad \quad i>n
\end{cases}
\end{equation*}
Using Theorem \ref{specialize} we have a h.$q_1$-s.$\tau_2$-d. $\{ d_{2,i} \}$ extending $\delta_2$, and a h.$q_2$-s.$\tau_4$-d. $\{ d_{4,i} \}$ extending $\delta_4$, both of which are iterative and locally nilpotent.  Let $S_2 \subseteq A_2^{Q,\Gamma}(k)$ be the multiplicative set generated by $x_2$. The derivation removing homomorphism induces an isomorphism 
\begin{equation*}
\Phi: k[y_1][x_1; \tau_2, \delta_2][y_2; \tau_3][z_2^{\pm 1}; \tau_4] \longrightarrow A_2^{Q,\Gamma}(k)S_2^{-1}
\end{equation*}
whose action on generators is given by
\begin{align*}
& y_1 \mapsto y_1\\
& x_1 \mapsto x_1\\
& z_2 \mapsto x_2\\
& y_2 \mapsto y_2 + (q_2 - 1)^{-1} \Bigl( (q_1-1)y_1 x_1 + 1 \Bigr) x_2^{-1}.
\end{align*}
For simplicity, label the domain of $\Phi$ as $BZ^{-1}$. Let $X_1 \subseteq BZ^{-1}$ be the Ore set generated by $z_2$ and $x_1$. Applying the derivation removing homomorphism to $BZ^{-1}$ induces an isomorphism
\begin{equation*}
\Psi: k[y_1][z_1^{\pm 1}; \tau_2][y_2; \tau_3][z_2^{\pm 1}; \tau_4'] \longrightarrow (BZ^{-1})X_1^{-1}
\end{equation*}
whose action on generators is given by
\begin{align*}
& z_1 \mapsto z_1\\
& z_2 \mapsto z_2\\
& y_2 \mapsto y_2\\
& y_1 \mapsto y_1 + (q_1-1)^{-1}x_1^{-1}.
\end{align*}
The derivation removing homomorphism need not be employed again to achieve the result.  Through iterated localization we find that there is an Ore set $T \subseteq A_2^{Q,\Gamma}(k)$ such that
\begin{equation*}
A_2^{Q,\Gamma}(k)T^{-1} \cong k[y_1^{\pm 1}][x_1^{\pm 1}; \tau_2][y_2^{\pm 1}; \tau_3][x_2^{\pm 1}; \tau_4]
\end{equation*}
and $T$ is generated by the four elements $x_2$, $x_1$, $y_2 x_2 (q_2 - 1) + y_1 x_1 (q_1 - 1) +1$,  and  $y_1 x_1(q_1 - 1) + 1$.  Note that we recover the result of \cite[Theorem 5]{jondrup2}.

\section{Examples}

We will demonstrate how each of the following $k$-algebras satisfies all the conditions of Theorem \ref{specialize}.  Then Corollary \ref{qaffine} is applied to obtain an isomorphism of quotient division rings (thereby confirming the quantum Gel'fand-Kirillov conjecture) and PI degree parity with a multiparameter quantum affine space.  When calculating the PI degree of a quantum affine space, we encounter an antisymmetric, or {\em skew-symmetric}, integral matrix. As proved in \cite[Theorem IV.1]{Newman}, such a matrix is congruent to a matrix in {\em skew normal form}.

\begin{theorem}\label{newman}
{\rm [Newman]} Let $A$ be a skew-symmetric matrix of rank $r$ which belongs to $M_n(R)$, where the commutative principal ideal domain $R$ is not of characteristic 2.  Then $r=2s$ and $A$ is congruent to a matrix in block diagonal form
\begin{equation*}S =
\begin{pmatrix}
 0     & h_1 &         &          &              &         & & & & \\
-h_1& 0     &          &          &              & &\boldsymbol 0 & & & \\
        &         & 0      & h_2  &              &         & & & &\\
        &         & -h_2 & 0      &              &         & & & &\\
        &         &          &          &  \ddots &         &         & & & &\\
        &         &          &          &              & 0      & h_s & & & &\\
        &\boldsymbol 0         &      & &  & -h_s & 0     & & & &\\
        &         &          &          &             &           &        & & & &\\        
        &         &          &          &              &          &        & & \boldsymbol 0& &\\
        &         &          &          &              &          &        &  & & &        
\end{pmatrix}
\end{equation*}
where $h_i \mid h_{i+1}, \, 1 \le i \le s-1$.
\end{theorem}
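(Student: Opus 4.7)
The plan is to mimic the Smith normal form algorithm via congruence operations $A \mapsto P^{T} A P$, which are the natural operations preserving skew-symmetry (since $(P^{T}AP)^{T} = -P^{T}AP$). For $P = I + c E_{ij}$ elementary, this operation simultaneously adds $c$ times row $i$ to row $j$ and $c$ times column $i$ to column $j$; for $P$ a permutation matrix, it performs a symmetric permutation of rows and columns. Because $R$ has characteristic not $2$, skew-symmetry forces the diagonal of $A$ to vanish, so nonzero entries occur in $\pm$-paired positions $(i,j),(j,i)$.

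I would induct on $n$, with $A=0$ as the base case. Using the length function $\ell: R \setminus \{0\} \to \mathbb{N} \cup \{0\}$ given by the number of prime factors (well-defined since $R$ is a PID), I choose among all matrices congruent to $A$ one, call it $B$, minimizing the length of a nonzero entry. A symmetric permutation moves that minimal entry to position $(1,2)$; denote it $h_1$.

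The crux is to show $h_1$ divides every entry of $B$. If $h_1 \nmid b_{1j}$ for some $j \ge 3$, set $g = \gcd(h_1, b_{1j})$, a proper divisor of $h_1$. Bezout in the PID produces $U \in SL_2(R)$ with $(h_1, b_{1j})\,U = (g, 0)$; the congruence acting by $U$ on columns $2, j$ and by $U^{T}$ on rows $2, j$ places $g$ in position $(1,2)$, contradicting the minimality of $\ell(h_1)$. The same argument handles offending entries $b_{2j}, b_{j1}, b_{j2}$. If $h_1 \nmid b_{ij}$ for some $i, j \ge 3$, first perform the congruence that adds row $i$ to row $1$ and column $i$ to column $1$; the new $(1,j)$ entry is $b_{1j} + b_{ij}$, not divisible by $h_1$, which reduces to the previous case.

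With divisibility in hand, use $h_1$ to clear the remainder of rows and columns $1, 2$: for each $j \ge 3$, write $b_{1j} = c h_1$ and subtract $c$ times column $2$ from column $j$ with the matching row subtraction; skew-symmetry then forces $(j,1)$ to vanish along with $(1,j)$, and an analogous step clears row and column $2$. This yields the block form
\begin{equation*}
B \;\sim\; \begin{pmatrix} 0 & h_1 & 0 \\ -h_1 & 0 & 0 \\ 0 & 0 & A' \end{pmatrix},
\end{equation*}
where $A'$ is $(n-2) \times (n-2)$, skew-symmetric, and every entry is divisible by $h_1$. Applying the inductive hypothesis to $A'$ produces the skew-normal-form block structure with $h_2 \mid h_3 \mid \dotsb \mid h_s$; the divisibility $h_1 \mid h_2$ is immediate since $h_1$ divides every entry of $A'$ and $h_2$ lies in the ideal generated by those entries. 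The trailing zero block and the identity $r = 2s$ fall out of the induction. The main obstacle to handle carefully is the minimality argument in a general (possibly non-Euclidean) PID: one must invoke the unimodular reduction above rather than Euclidean division, and must verify at every stage that each row operation is paired with its matching column operation so that skew-symmetry is preserved throughout.
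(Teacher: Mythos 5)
The paper does not prove this result; it cites it directly from M.~Newman's \emph{Integral Matrices} [Theorem IV.1] and notes the bilinear-forms version in Bourbaki. So there is no in-paper proof to compare against, and your argument has to stand on its own.

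Your overall strategy --- minimize the PID-length of a nonzero entry over the congruence class, use Bezout/$SL_2$ moves in place of Euclidean division, establish that the minimal entry $h_1$ divides everything, clear the first two rows and columns, and induct on the complementary $(n-2)\times(n-2)$ block --- is the right one, and your handling of the $SL_2(R)$ unimodular step for a non-Euclidean PID, as well as the observation that char $\ne 2$ forces a zero diagonal, are both correct.

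There is, however, a genuine gap in the divisibility argument for entries $b_{ij}$ with $i,j\ge 3$. You add row $i$ to row $1$ (and column $i$ to column $1$) and then say the resulting non-divisible entry $b_{1j}+b_{ij}$ in position $(1,j)$ ``reduces to the previous case.'' But the previous case leaned on the $(1,2)$ entry being $h_1$: it produced $\gcd(h_1, b_{1j})$ as a \emph{proper divisor of $h_1$}, hence of strictly smaller length. After your row/column addition, the $(1,2)$ entry has become $h_1 + b_{i2} = ch_1$ for some $c\in R$ (using $h_1\mid b_{i2}$ already established). If $c$ is not a unit, the unimodular move now only produces $\gcd(ch_1,\, b_{1j}+b_{ij})$, which need not be shorter than $h_1$: for example with $R=\mathbb Z$, $h_1 = 2$, $c = 3$, and $b_{1j}+b_{ij} = 9$, one gets $\gcd(6,9) = 3$, of the same length as $h_1$. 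So no contradiction with the minimality of $\ell(h_1)$ follows, and the claimed reduction breaks down.

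The fix is the standard reordering: first establish $h_1 \mid b_{1k}$ and $h_1\mid b_{2k}$ for all $k\ge 3$ (which you do correctly), \emph{then} clear rows and columns $1,2$ using $h_1$ to reach the block form $\begin{pmatrix}0 & h_1\\ -h_1 & 0\end{pmatrix}\oplus A'$, and only \emph{then} test divisibility of the entries of $A'$. With the clearing done, row $1$ has a single nonzero entry $h_1$ in position $(1,2)$, so adding row $i+2$ (of the inner block) to row $1$ and column $i+2$ to column $1$ leaves the $(1,2)$ entry equal to $h_1$ while placing the offending entry $a'_{ij}$ in position $(1,j+2)$; the unimodular move then really does produce a proper divisor of $h_1$ and the contradiction goes through. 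With that reordering, the rest of your induction, the divisibility chain $h_1\mid h_2\mid\cdots$, the trailing zero block, and the rank identity $r=2s$ all hold as you indicate.
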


The same result, in the language of alternating bilinear forms, can be found in \cite [Section 5.1]{Bourbaki}.

The matrix $S$ in Theorem \ref{newman} is clearly equivalent to the more familiar Smith normal form, diag$(h_1,h_1,h_2,h_2,\dotsc,h_s,h_s,0,0,\dotsc,0)$, where the diagonal entries are the invariant factors of the matrix $A$.  In the examples that follow, we outline the operations necessary to obtain the Smith normal form.

\begin{definition}  {\rm Let $A = k[x_1;\tau_1, \delta_1] \dotsb [x_n;\tau_n, \delta_n]$ and $A' = k[x_1;\tau_1] \dotsb [x_n;\tau_n]$ be iterated skew polynomial rings. 
(1)  If there exists $Q = (q_1, \dotsc , q_n) \in (k^{\times})^n$ such that $\delta_i \tau_i = q_i \tau_i \delta_i$ for $i = 1, \dotsc, n$, then $A$ is called an} iterated $Q$-skew polynomial ring.  {\rm (2) If there exist $\lambda_{ji} \in k^{\times}$ such that $\tau_j(x_i) = \lambda_{ji} x_i$ for all $i<j$, then set $\lambda _{ij} = \lambda_{ji}^{-1}$ and $\lambda_{ii} = 1$ for all $i$.  We call $\Lambda = (\lambda_{ij}) \in M_n(k^{\times})$} the matrix of relations for $A'$.
\end{definition}

\begin{lemma}\label{containment}
Let $C$ be a commutative $k$-algebra, $A$ a $C$-algebra, $B \subseteq A$ a $C$-subalgebra generated by $\{ b_1, b_2, \dotsc \}$. Let $\tau$ be a $C$-algebra automorphism of $A$, and $\delta$ a $u$-skew $\tau$-derivation on $A$ for some unit $u \in C$.  If $\tau(b_j) \in B$ and $\delta^n (b_j) \in (n)!_u B$ for all $j,\,n$, then $\delta^n (B) \subseteq (n)!_u B$ for all $n$.
\end{lemma}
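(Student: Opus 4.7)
The plan is a straightforward induction on the length of monomials in the generators $b_j$, using the $u$-Leibniz rule recorded in Definition~\ref{tbinomcoeff} together with the identity $\binom{n}{i}_u (i)!_u (n-i)!_u = (n)!_u$.

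Since $\delta$ is $C$-linear and $B$ is $C$-linearly spanned by products of the $b_j$, it suffices to show that $\delta^n(w) \in (n)!_u B$ for every monomial $w = b_{j_1} b_{j_2} \dotsb b_{j_m}$ and every $n \ge 0$. The case $n = 0$ is trivial, and the hypothesis covers the case $m = 1$. Note also that $\tau(b_j) \in B$ for all $j$ implies $\tau(B) \subseteq B$, and hence $\tau^p(B) \subseteq B$ for all $p \ge 0$ (using that $\tau$ is a $C$-algebra automorphism).

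For the inductive step, assume the conclusion holds for all monomials of length less than $m$, and write $w = r b_{j_m}$ with $r = b_{j_1} \dotsb b_{j_{m-1}}$. Apply the $u$-Leibniz rule:
\begin{equation*}
\delta^n(w) = \sum_{i=0}^n \binom{n}{i}_u \tau^{n-i}\bigl(\delta^i(r)\bigr)\, \delta^{n-i}(b_{j_m}).
\end{equation*}
By the inductive hypothesis, write $\delta^i(r) = (i)!_u \, r_i$ with $r_i \in B$, and by the hypothesis on the generators, write $\delta^{n-i}(b_{j_m}) = (n-i)!_u \, s_i$ with $s_i \in B$. Since $\tau^{n-i}(r_i) \in B$ and $(i)!_u, (n-i)!_u \in C$ are central, each summand equals
\begin{equation*}
\binom{n}{i}_u (i)!_u (n-i)!_u \, \tau^{n-i}(r_i)\, s_i \; = \; (n)!_u \, \tau^{n-i}(r_i)\, s_i \; \in \; (n)!_u \, B.
\end{equation*}
Summing over $i$ yields $\delta^n(w) \in (n)!_u B$, completing the induction.

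There is no real obstacle beyond bookkeeping: the only ingredient besides the Leibniz rule is the elementary factorial identity, and the stability of $B$ under $\tau$ is immediate from the assumption $\tau(b_j) \in B$.
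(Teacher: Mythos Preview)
Your proof is correct and follows essentially the same approach as the paper: induction on the length of monomials in the generators, application of the $u$-Leibniz rule to split off the last factor, and the factorial identity $\binom{n}{i}_u (i)!_u (n-i)!_u = (n)!_u$. The paper's proof is slightly more terse but structurally identical.
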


\begin{proof}
Note that $\tau (b_j) \in B$ for all $j$ implies that $\tau (B) \subseteq B$ and hence we have \linebreak[4]$\tau \big ( (j)!_u B \big ) \subseteq (j)!_u B$ for all $j$.  Suppose that for integers $m \ge 1$ and $1 \le \ell \le m-1$, we have $\delta^i (b_{j_1} \dotsb b_{j_{\ell}} ) \in (i)!_u B$ for all $i$, and all choices of $j_1, \dotsc, j_{\ell}$.  Then
\begin{align*}
\delta^n (b_{j_1} \dotsb b_{j_m}) &= \sum_{i=0}^n \binom{n}{i}_u \tau^{n-i} \delta^i (b_{j_1} \dotsb b_{j_{(m-1)}} ) \delta^{n-i} (b_{j_m})\\
& \in \sum_{i=0}^n \binom{n}{i}_u (i)!_u (n-i)!_u B  \subseteq (n)!_u B
\end{align*}
for all $n$ and all $j_1, \dotsc, j_m$ by induction.
\end{proof}

For a first family of examples, we take odd-dimensional quantum Euclidean spaces.  The even-dimensional ones will be covered in Example 5.4.

\subsection{The coordinate ring of odd-dimensional quantum Euclidean space; $\mathcal O_q (\mathfrak o k^{2n+1})$}

For $q \in k^{\times}$, assuming $q$ has a (fixed) square root $q^{1/2} \in k$, the $k$-algebra $\mathcal O_q (\mathfrak o k^{2n+1})$ may be presented as an iterated skew polynomial ring 
\begin{equation*} 
k[w][y_1; \sigma_1][x_1; \tau_1, \delta_1] \dotsb [y_n; \sigma_n][x_n; \tau_n, \delta_n]
\end{equation*}
with automorphisms $\sigma_i, \tau_i$ and derivations $\delta_i$ defined by
\begin{align*}
\sigma_i (w) & = q^{-1} w &\text{all } i\\
\tau_i (w) & = qw & \text{all } i\\
\sigma_i (y_j) & =q^{-1}y_j & j<i\\
\sigma_i (x_j) & = q^{-1}x_j  & j<i\\
\tau_i (y_j) & = qy_j  & i \ne j\\
\tau_i (x_j) & =qx_j  &j<i\\
\tau_i (y_i) & = y_i  &\text{all } i\\
\delta_i (w) & = \delta_i (x_j) = \delta_i (y_j) = 0  & j<i\\
\delta _i (y_i) & = (q^{1/2} - q^{3/2})w^2 + (1 - q^2) \sum_{\ell < i}y_{\ell} x_{\ell} & \text{all }i.
\end{align*}
Quantum Euclidean spaces have been studied since 1990 when they were introduced by Reshetikhin et al. in \cite{RTF}.  The three-dimensional case has applications to the structure of space-time at small distances.  Musson simplified the original set of relations in \cite{musson}, and Oh further simplified them, renaming the generators $\omega,\, x_i, \, y_i$ in \cite{oh}.  Here, we have made a change to Oh's variables, $y_i \mapsto q^i y_i$, to obtain the relations in our presentation of  $\mathcal O_q (\mathfrak o k^{2n+1})$.

Routine computations show that $\tau_i^{-1} \delta_i \tau_i (y_i) = q^{-2}\delta_i(y_i)$ for all $i$, and so we conclude that each $(\tau_i, \delta_i)$ is a $q^{-2}$-skew derivation. We may present the analogous $k[t^{\pm 1}]$-algebra $\mathcal O_t (\mathfrak o k[t^{\pm 1}]^{2n+1})$ as an iterated skew polynomial ring with coefficient ring $k[t^{\pm 1}]$ and generators $w$, $y_i$, $x_i$ for $i=1, \dotsc, n$, 
\begin{equation*}
k[t^{\pm 1}][w][y_1; \bar \sigma_1][x_1; \bar \tau_1, \bar \delta_1] \dotsb [y_n; \bar \sigma_n][x_n; \bar \tau_n, \bar \delta_n]
\end{equation*}
where the automorphisms and derivations are defined analogously to those of the algebra $\mathcal O_q (\mathfrak o k^{2n+1})$ with $t \in k[t^{\pm 1}]$ replacing $q \in k^{\times}$.  So each $(\bar \tau_i, \bar \delta_i)$ is a $t^{-2}$-skew derivation.  It is immediate that 
\begin{equation*} 
\mathcal O_t (\mathfrak o k[t^{\pm 1}]^{2n+1}) / \langle t-q \rangle \cong \mathcal O_q (\mathfrak o k^{2n+1})
\end{equation*}
with each $\bar \tau_i$ and $\bar \delta_i$ reducing to $\tau_i$ and $\delta_i$ respectively.

Let $A_j$ denote the $k[t^{\pm 1}]$-subalgebra generated by $w$, $y_m$, $x_m$ for $m<j$, and $y_j$.  To show that $\bar \delta_j^i (A_j) \subseteq (i)!_{t^{-2}} A_j$, we apply Lemma \ref{containment} noting that $\bar \delta_j^i (y_j)$ has been given for $ i=1$ and is zero for $i>1$.
So, by Theorem \ref{specialize}, each $\delta_i$ in our presentation of $\mathcal O_q (\mathfrak o k^{2n+1})$ extends to an iterative, locally nilpotent h.$q^{-2}$-s.$\tau_i$-d. on an appropriate subalgebra. Then Corollary \ref{qaffine} gives 
\begin{equation*}
{\rm Fract}\,\mathcal O_q (\mathfrak o k^{2n+1}) \cong {\rm Fract}\, \mathcal O_B (k^{2n+1}),
\end{equation*}
where the matrix of relations is 
\begin{equation*}
B = \begin{pmatrix} 1 & q & q^{-1} & q & q^{-1} & \cdots &q & q^{-1}\\ 
q^{-1} & 1 & 1 & q & q^{-1} &\cdots &q &q^{-1}\\ 
q & 1 & 1 & q & q^{-1} & \cdots & q & q^{-1}\\
q^{-1} & q^{-1} & q^{-1} & 1 & 1 & \cdots & q & q^{-1}\\
q & q & q & 1 & 1 & \cdots & q & q^{-1}\\ 
\vdots & \vdots & \vdots & \vdots & \vdots &\ddots  & \vdots & \vdots \\ 
 q^{-1} & q^{-1} & q^{-1} & q^{-1} & q^{-1} & \cdots& 1 & 1 \\ 
 q & q & q & q & q & \cdots & 1 & 1 \end{pmatrix}.
\end{equation*} 
If $q \in k^{\times}$ is a root of unity, we may assume without loss of generality that it is a primitive $r^{th}$ root of unity.  Then the powers of $q$ from the matrix $B$ become the entries of a  $(2n+1) \times (2n+1)$ integer matrix
\begin{equation*}
B' = \begin{pmatrix} 0 & 1 & -1 & 1 & {-1} & \cdots &1 & {-1}\\ 
{-1} & 0 & 0 & 1 & {-1} &\cdots &1 &{-1}\\ 
1 & 0 & 0 & 1 & {-1} & \cdots & 1 & {-1}\\
{-1} & {-1} & {-1} & 0 & 0 & \cdots & 1 & {-1}\\
1 & 1 & 1 & 0 & 0 & \cdots & 1 & {-1}\\ 
\vdots & \vdots & \vdots & \vdots & \vdots &\ddots  & \vdots & \vdots \\ 
 {-1} & {-1} & {-1} & {-1} & {-1} & \cdots& 0 & 0 \\ 
 1 & 1 & 1 & 1 & 1 & \cdots & 0 & 0 \end{pmatrix}.
\end{equation*} 
Now, ${\rm PI deg}\, \mathcal O_q (\mathfrak o k^{2n+1})$ can be computed from Theorem \ref{dp}(2) using the matrix $B'$.  The cardinality of the image will not be changed if we first perform some row reductions on $B'$.  Letting $N = 2n+1$, $n > 2$, we manipulate the rows as follows.
\begin{itemize}
\item For $i=2,4,6,\dotsc,N-1$, replace row $i$ with row $i$ + row $(i+1)$.
\item For $i=N,N-2,N-4,\dotsc,5$, replace row $i$ with row $i$ $-$ row $(i-2)$.
\item Replace row 5 with row 5 $-$ row 1.
\item For $i=2,4,6,\dotsc,N-5$, replace row $i$ with row $i$ $-$ 2row $(i+5)$.
\item Multiply the even numbered rows, except row $2n-2$, by $-1$.
\end{itemize}
The resulting matrix has $2n$ pivots and one zero row.  We put the rows in this order
\begin{equation*}
3,1,5,7,2,9,4,11,6,13,\dotsc,2i,2i+7,\dotsc,N,N-5,N-3,N-1
\end{equation*}
to place the pivots on the main diagonal and the zero row in the last position.  Then we have a matrix of this form
\begin{equation*}
\begin{pmatrix} 
1 & * & * & * & * & * & * & * & * & *  & * & *\\ 
0 & 1 & {-1} & * & * & * & * & * & * & * & * & *\\
0 & 0 & 2 & * & * & * & * & * & * & * & * & *\\ 
0 & 0 & 0 & 1 & 1 & * & * & * & * & * & * & *\\ 
0 & 0 & 0 & 0 & 4 & * & * & * & * & * & * & *\\
0 & 0 & 0 & 0 & 0 & 1 & 1 & * & * & * & * & *\\
0 & 0 & 0 & 0 & 0 & 0 & 4 & * & * & * & * & *\\
0 & 0 & 0 & 0 & 0 & 0 & 0 &\ddots & * & * & * & * \\
0 & 0 & 0 & 0 & 0 & 0 & 0 & 0 & 1 & 1 & * & *\\ 
0 & 0 & 0 & 0 & 0 & 0 & 0 & 0 & 0 & 4 & * & *\\ 
0 & 0 & 0 & 0 & 0 & 0 & 0 & 0 & 0 & 0 & 2 & -2\\ 
0 & 0 & 0 & 0 & 0 & 0 & 0 & 0 & 0 & 0 & 0 & 0
\end{pmatrix}.
\end{equation*}
The diagonal entries of this echelon matrix do not yet reveal the size of its image because the pivot in row three does not divide all of the (suppressed) entries in its row when $n \ge 3$.  So more row reduction is needed.
\begin{equation*}
\text{First replace row 3 with } \text{row } 3 + \sum_{i=1}^{\lfloor \frac{n-2}{2} \rfloor} \text{row}(4i+2).\\
\end{equation*}
For $n$ even and  $j = 5, 7, 9, \dotsc, 2n-3$,  replace row $j$ as follows:
\begin{align*}
&\text{for } j=4p+1, p \ge 1, \text{ use row } j + \sum_{i=p+1}^{\frac{n-2}{2}}2\cdot \text{row}(4i) + \text{row}(2n);\\
&\text{for } j=4p+3, p \ge 1, \text{ use row } j + \sum_{i=p+1}^{\frac{n-2}{2}}2\cdot \text{row}(4i+2).
\end{align*}
For $n$ odd and $j = 5, 7, 9, \dotsc, 2n-5$, replace row $j$ as follows:
\begin{align*}
&\text{for } j=4p+1, p \ge 1, \text{ use row } j + \sum_{i=p+1}^{\lfloor \frac{n}{2}\rfloor}2\cdot \text{row}(4i) + 2\cdot \text{row}(2n);\\
&\text{for } j=4p+3, p \ge 1, \text{ use row } j + \sum_{i=p+1}^{\lfloor \frac{n-2}{2}\rfloor}2\cdot \text{row}(4i+2) + \text{row}(2n).
\end{align*}
Then add row($2n$) to row($2n-3$), and add 2$\cdot$row($2n$) to row($2n-1$).  For integers \linebreak[4]$4 \le j \le 2n-1$, with $ j \not \equiv 2(\text{mod } 4)$, add $(-1)^j \text{col } 3$ to col $j$.  Subtract col($2n+1$) from col 3; add row 3 to row($2n-2$); and subtract 2$\cdot$row 3 from row($2n$).  The result is an upper echelon matrix in which each pivot divides all the nonzero entries in its row.  So it is trivial to diagonalize by column operations.  The Smith normal form for $n$ odd is diag$(1,1,\dotsc,1,4,4,\dotsc,4,0)$ with $n+1$ ones and $n-1$ fours.  The Smith normal form for $n$ even is diag$(1,1,\dotsc,1,2,2,4,4,\dotsc,4,0)$ with $n$ ones, two twos, and $n-2$ fours.

For the cases $ n = 1, \, 2$, the row-reduced matrices are, respectively,
\begin{equation*}
\begin{pmatrix}
1 & 0 & 0\\ 0 & 1 & -1 \\ 0 & 0 & 0
\end{pmatrix}, \qquad
\begin{pmatrix}
1 & 0 & 0 & 1 & -1 \\ 0 & 1 & -1 & 1 & -1\\ 0 & 0 & 2 & -2 & 2\\ 0 & 0 & 0 & 2 & -2\\ 0 & 0 & 0 & 0 & 0
\end{pmatrix}.
\end{equation*} 
Hence we have, for all $n>0$, 
\begin{equation*}  
{\rm PI deg}\,  \mathcal O_q (\mathfrak o k^{2n+1})= \begin{cases}
r^n, \qquad \qquad  &r \text{ odd}\\
r^n /  2^{\lfloor \frac{n}{2} \rfloor}, \qquad  &r \text{ even, } r \notin 4\mathbb Z\\
r^n / 2^{n-1}, \qquad   &r \in 4\mathbb Z
\end{cases}.
\end{equation*}

\subsection{The multiparameter quantized Weyl algebras; $A_n^{Q, \Gamma}(k)$}

For a fixed $n$-tuple $Q = (q_1, \dotsc, q_n) \in (k^{\times})^n$ and $\Gamma = (\gamma_{ij})$ a multiplicatively antisymmetric $n \times n$ matrix over $k$, the algebra $A_n^{Q, \Gamma}(k)$, studied in \cite{jordan} and \cite{malt}, may be presented as an iterated skew polynomial ring
\begin{equation*} 
k[y_1][x_1; \tau_1, \delta_1][y_2; \sigma_2][x_2; \tau_2, \delta_2] \dotsb [y_n; \sigma_n][x_n; \tau_n, \delta_n]
\end{equation*}
where the automorphisms and derivations are defined by
\begin{align*}
\sigma_i (y_j) & = \gamma_{ji} y_j  &j<i\\
\sigma_i (x_j) & = \gamma_{ij} x_j & j<i\\
\tau_i (y_j) & = q_j \gamma_{ji} y_j & j<i\\
\tau_i (x_j) & = q_j^{-1} \gamma_{ij} x_j &j<i\\
\tau_i (y_i) & = q_i y_i & \text{all } i\\
\delta_i (x_j) & = \delta_i (y_j) = 0 & j<i\\
\delta_i (y_i) & = 1 + \sum_{\ell <i} (q_{\ell} - 1) y_{\ell} x_{\ell} & \text{all } i. 
\end{align*}
Routine computations show that $\tau_i^{-1} \delta_i \tau_i (y_i) = q_i \delta_i (y_i)$ for all $i$, and so we conclude that each $(\tau_i, \delta_i)$ is a $q_i$-skew derivation. We may present the $k[t_1^{\pm 1}, \dotsc, t_n^{\pm 1}]$-algebra $A_n^{T, \Gamma}(k[t_1^{\pm 1}, \dotsc, t_n^{\pm 1}])$ as an iterated skew polynomial ring 
\begin{equation*}
k[t_1^{\pm 1}, \dotsc, t_n^{\pm 1}][y_1][x_1;\bar \tau_1, \bar \delta_1][y_2;\bar \sigma_2][x_2;\bar \tau_2, \bar \delta_2] \dotsb [y_n;\bar \sigma_n][x_n;\bar \tau_n, \bar \delta_n]
\end{equation*}
where the automorphisms and derivations are defined analogously to those of $A_n^{Q, \Gamma}(k)$ with $t_i \in k[t_1^{\pm 1}, \dotsc, t_n^{\pm 1}]$ replacing $q_i \in k$.  So each $(\bar \tau_i, \bar \delta_i)$ is a $t_i$-skew derivation.  It is immediate that
\begin{equation*}  
A_n^{T, \Gamma}(k[t_1^{\pm 1}, \dotsc, t_n^{\pm 1}]) /  \langle t_1-q_1, \dotsc, t_n-q_n \rangle  \cong A_n^{Q, \Gamma}(k)
\end{equation*}
with each $\bar \tau_i$ and $\bar \delta_i$ reducing to $\tau_i$ and $\delta_i$ respectively.

Let $A_j$ denote the $k[t_1^{\pm 1}, \dotsc, t_n^{\pm 1}]$-subalgebra generated by $y_m$, $x_m $ for $m<j$, and $y_j$.  To show that $\bar \delta_j^i(A_j) \subseteq (i)!_{t_j} A_j$, it suffices to check $\bar \delta_j^i(y_j)$ by Lemma \ref{containment}.  But this is given by definition for $i=1$ and is zero for $i>1$. 
So, by Theorem \ref{specialize}, each $\delta_i$ in our presentation of $A_n^{Q, \Gamma}(k)$ extends to an iterative, locally nilpotent h,$q_i$-s.$\tau_i$-d. on the appropriate subalgebra.  Then Corollary \ref{qaffine} gives ${\rm Fract}\, A_n^{Q, \Gamma}(k) \cong {\rm Fract}\, \mathcal O_{\Lambda}(k^{2n})$, where the $2n \times 2n$ matrix of relations $\Lambda$ is comprised of $2 \times 2$ blocks

\begin{align*}
B_{ii} & = \begin{pmatrix} 1 &q_i^{-1}\\ q_i &1\end{pmatrix}, \text{ for all } i; \\B_{ij} &= \begin{pmatrix} \gamma_{ji} &q_i^{-1} \gamma_{ji}\\ \gamma_{ij} &q_i \gamma_{ij} \end{pmatrix}, \text{ for } i<j ;\\B_{ij} &= \begin{pmatrix} \gamma_{ji} & \gamma_{ij}\\ q_j \gamma_{ji} &q_j^{-1}\gamma_{ij} \end{pmatrix}, \text{ for } i>j.
\end{align*}
If $\gamma_{ij}$ and $q_i$ are roots of unity for all $i, j$, then $\mathcal O_{\Lambda}(k^{2n})$ is a PI algebra.  Assuming that $\gamma_{ij}$ is an $r_{ij}^{th}$ root of unity and that $q_i$ is an $r_i^{th}$ root of unity, we let 
\begin{equation*}
r = {\rm lcm}\{ r_{ij}, r_i \mid i, j = 1, \dotsc, n\}.
\end{equation*}
Then there exists a primitive $r^{th}$ root of unity $q \in k$ and integers $b_i$, $b_{ij}$ such that $q_i = q^{b_i}$ and $\gamma_{ij} = q^{b_{ij}}$ for $i, j = 1, \dotsc, n$.  The powers of this $q$ from the matrix $\Lambda$ give a $2n \times 2n$ integer matrix $\Lambda'$ comprised of $2 \times 2$ blocks

\begin{align*}
B'_{ii} & = \begin{pmatrix} 0 &-b_i\\ b_i &0\end{pmatrix}, \text{ for all } i; \\B'_{ij} &= \begin{pmatrix} b_{ji} &b_{ji}-b_i\\ b_{ij} &b_{ij}+b_i \end{pmatrix}, \text{ for } i<j ;\\B'_{ij} &= \begin{pmatrix} b_{ji} & b_{ij}\\ b_j+ b_{ji} &b_{ij}-b_j \end{pmatrix}, \text{ for } i>j.
\end{align*}
Then ${\rm PI deg}\, A_n^{Q. \Gamma} (k)$ can be computed using the matrix $\Lambda'$ in Theorem \ref{dp} (2).

Consider the single parameter case, denoted $A_n^q(k)$, where $q_i = q$ for all $i$, and $\gamma_{ij} = 1$ for $i<j$, relegating the $\sigma_i$ to identity maps.  Assuming that $q$ is a primitive $r^{th}$ root of unity, then $\delta_i(y_i^r) = 0$ and $\tau_i(y_i^r) = y_i^r$ for all $i$, implying that $y_i^r$ is central.  The definition of the $\tau_i$, along with the $q$-Liebnitz rule, implies that $x_i^r$ is central for all $i$.  So the algebra $A_n^q(k)$ is a finitely generated module over the central subring $k[y_i^r,\,x_1^r, \dotsc, y_n^r,\, x_n^r]$.   To find the PI degree in this case, the integer matrix becomes
\begin{equation*}
\Lambda' = \begin{pmatrix}
0 &-1  &0 & -1 & \hdots & 0 & -1\\
1 & 0 & 0 & 1 &  & 0 & 1\\
0 & 0 & 0 & -1 & & 0 & -1\\
1 & -1 & 1 & 0 & & 0 & 1\\
\vdots & & & & \ddots & & \\
0 & 0 & 0  &0 & \hdots & 0 & -1\\
1 & -1 & 1 & -1 & \hdots & 1 & 0
\end{pmatrix},
\end{equation*}
which is seen to have a trivial kernel after these row reductions:
\begin{itemize}
\item Replace row $2n$ with $\text{row } 2n - \text {row } (2n-2) - \text{row } (2n -3)$
\item For $j = n-1, n-2, \dotsc, 2$, replace row $2j$ with $\text{row }2j - \text{row }(2j-2) - \text{row }(2j-3)$
\item Rearrange the rows to order $2, 1, 4, 3, 6, 5 \dotsc, 2n, 2n-1$.
\end{itemize}
The resulting matrix has the form
\begin{equation*}
 \begin{pmatrix}
1 & 0  & &  &  &  & \\
 0 & -1  &  &  &  & \boldsymbol{*}  & \\
  &   & 1 & 0 & & & \\
  &   & 0 & -1 & & & \\
 &  &  &  & \ddots & & \\
 &  \bf 0 &   &  &  & 1 & 0\\
 &  &  &  & & 0 & -1
\end{pmatrix},
\end{equation*}
thus verifying that ${\rm PI deg}\, A_n^q(k) = r^n$.

\subsection{The multiparameter coordinate ring of quantum $n \times n$ matrices; $\mathcal O_{\lambda, {\boldsymbol p}}\big (M_n(k)\big )$}

The multiparameter coordinate ring of quantum $n \times n$ matrices was introduced by Artin, Schelter, and Tate in \cite{ast}.
The $k$-algebra $\mathcal O_{\lambda, {\boldsymbol p}}\big (M_n(k)\big )$ is defined by generators $x_{ij}$ for $i,j = 1, \dotsc, n$ and relations 

\begin{equation*}
x_{\ell m} x_{ij}  = \begin{cases}
p_{\ell i} p_{jm} x_{ij} x_{\ell m} + (\lambda - 1)p_{\ell i} x_{im} x_{lj} \phantom{XX} (\ell> i, \, m > j)\\
\lambda p_{\ell i} p_{jm} x_{ij} x_{\ell m} \phantom{XXXXXXXXXxx} (\ell > i, \, m \le j)\\
p_{jm}x_{ij}x_{\ell m} \phantom{XXXXXXXXXXXxx} (\ell = i, \, m > j),
\end{cases}
\end{equation*}
where $\lambda \in k^{\times}$ and ${\boldsymbol p} = (p_{ij}) \in M_{n^2}(k^{\times})$ is multiplicatively antisymmetric.  It can also be presented as an iterated skew polynomial ring
\begin{equation*}
k[x_{11}][x_{12};\tau_{12}] \dotsb [x_{ij};\tau_{ij}, \delta_{ij}] \dotsb [x_{nn};\tau_{nn}, \delta_{nn}]
\end{equation*}
where each $\tau_{\ell m}$ and $\delta_{\ell m}$ is $k$-linear and satisfies

\begin{equation*}
\tau_{\ell m}(x_{ij}) = \begin{cases}
p_{\ell i} p_{jm}x_{ij} \phantom{XXXXXx} \text{when } \ell > i \text{ and } m \ne j\\
\lambda p_{\ell i} p_{jm} x_{ij} \phantom{XXXXX} \text{when } \ell > i \text{ and } m=j\\
p_{jm} x_{ij} \phantom{XXXXXXX} \text{when } \ell = i \text{ and } m>j,
\end{cases}
\end{equation*}

\begin{equation*}
\delta_{\ell m}(x_{ij}) = \begin{cases}
(\lambda - 1) p_{\ell i} x_{im} x_{\ell j}  \phantom{XXX} \text{when } \ell > i \text{ and } m>j\\
0 \phantom{XXXXXXXXXX} \text{otherwise.}
\end{cases}
\end{equation*}
Routine computations show $\tau_{\ell m}^{-1} \delta_{\ell m} \tau_{\ell m} (x_{ij}) = \lambda^{-1} \delta_{\ell m}(x_{ij})$ as in \cite[Section 5]{uniranks}, and so we conclude that each $(\tau_{\ell m}, \delta_{\ell m})$ is a $\lambda^{-1}$-skew derivation.  We may present the $k[t^{\pm 1}]$-algebra $\mathcal O_{t, {\boldsymbol p}}\big (M_n(k[t^{\pm 1}])\big )$ as an iterated skew polynomial ring with generators $x_{ij}$ for $i, j = 1, \dotsc, n$
\begin{equation*}
k[t^{\pm 1}][x_{11}][x_{12}, \bar\tau_{12}] \dotsb [x_{ij}; \bar \tau_{ij}, \bar \delta_{ij}] \dotsb [x_{nn}; \bar \tau_{nn}, \bar \delta_{nn}]
\end{equation*}
where the automorphisms and derivations are defined analogously to those of the algebra $\mathcal O_{\lambda, {\boldsymbol p}}\big (M_n(k)\big )$ with $t \in k[t^{\pm 1}]$ replacing $\lambda \in k$.  So each $(\bar \tau_{\ell m}, \bar \delta_{\ell m})$ is a $t^{-1}$-skew derivation.  It is immediate that 
\begin{equation*}
\mathcal O_{t, {\boldsymbol p}}\big (M_n(k[t^{\pm 1}])\big ) / \langle t -\lambda \rangle \cong \mathcal O_{\lambda, {\boldsymbol p}}\big (M_n(k)\big ) 
\end{equation*}
with each $\bar \tau_{\ell m}$ and $\bar \delta_{\ell m}$ reducing to   $\tau_{\ell m}$ and $\delta_{\ell m}$ respectively.  

Let $A_{\ell m}^-$ denote the $k[t^{\pm 1}]$-subalgebra generated by the $x_{ij}$ with $(i, j) < (\ell, m)$ in the lexicographic order.  Lemma \ref{containment} allows us to to verify that $\bar \delta_{\ell m}^s (A_{\ell m}^- ) \subseteq (s)!_{t^{-1}} (A_{\ell m}^-)$ by checking only that $\bar \delta_{\ell m}^s (x_{ij})$ is contained in $A_{\ell m}^-$.  This is immediate from the formula for $\bar \delta_{\ell m}$ given above. 
Thus, by Theorem \ref{specialize}, each $\delta_{\ell m}$ in our presentation of $\mathcal O_{\lambda, {\boldsymbol p}}\big (M_n(k)\big )$ extends to an iterative, locally nilpotent h.$\lambda^{-1}$-s.$\tau_{\ell m}$-d. on the appropriate $k$-subalgebra.  Then Corollary \ref{qaffine} gives 
\begin{equation*}
{\rm Fract}\, \mathcal O_{\lambda, {\boldsymbol p}} \big (M_n(k) \big ) \cong {\rm Fract}\, \mathcal O_{\Lambda} (k^{n^2})
\end{equation*}
where the matrix of relations $\Lambda = (b_{ij}) \in M_{n^2} (k)$ is comprised of $n \times n$ blocks
\begin{equation*}
B_{ii}= \begin{pmatrix} 1 &p_{21} &p_{31} &\cdots &p_{n1}\\ p_{12} &1 &p_{32} &\cdots &p_{n2}\\ p_{13} &p_{23} &1 &\cdots &p_{n3}\\ \vdots &\vdots &&\ddots &\vdots\\ p_{1n} &p_{2n} &p_{3n} &\cdots &1 \end{pmatrix} \text{  for all } i,
\end{equation*}
\begin{equation*}
B_{ij}= \begin{pmatrix} \lambda^{-1}p_{ij} &p_{ij}p_{21} &p_{ij}p_{31} &\cdots &p_{ij}p_{n1}\\ \lambda^{-1}p_{ij}p_{12} &\lambda^{-1}p_{ij} &p_{ij}p_{32} &\cdots &p_{ij}p_{n2}\\ \lambda^{-1}p_{ij}p_{13} &\lambda^{-1}p_{ij}p_{23} &\lambda^{-1}p_{ij} &\cdots &p_{ij}p_{n3}\\ \vdots &\vdots &&\ddots &\vdots\\ \lambda^{-1}p_{ij}p_{1n} &\lambda^{-1}p_{ij}p_{2n} &\lambda^{-1} p_{ij} p_{3n} &\cdots &\lambda^{-1}p_{ij} \end{pmatrix}, \text{  for } i<j,\\
\end{equation*} 
\begin{equation*}
B_{ij}= \begin{pmatrix} \lambda p_{ij} &\lambda p_{ij}p_{21} &\lambda p_{ij}p_{31} &\cdots &\lambda p_{ij}p_{n1}\\ p_{ij}p_{12} &\lambda p_{ij} &\lambda p_{ij}p_{32} &\cdots &\lambda p_{ij}p_{n2}\\ p_{ij}p_{13} &p_{ij}p_{23} & \lambda p_{ij} &\cdots &\lambda p_{ij}p_{n3}\\ \vdots &\vdots &&\ddots &\vdots\\  p_{ij}p_{1n} & p_{ij}p_{2n} &p_{ij} p_{3n} &\cdots &\lambda p_{ij} \end{pmatrix}, \text{  for } i>j.
\end{equation*}
If $\lambda$ and $p_{ij}$ are roots of unity for all $i, j$, then $\mathcal O_{\Lambda}(k^{n^2})$ is a PI algebra.  In this case we may assume that $\lambda$ is an $s^{th}$ root of unity and that $p_{ij}$ is an $r_{ij}^{th}$ root of unity, and let \linebreak[4]r = ${\rm lcm}\{s, r_{ij} \mid i, j = 1, \dotsc, n \}$.  Then there exists a primitive $r^{th}$ root of unity $q \in k$ and integers $b, b_{ij}$ such that $\lambda = q^b$ and $p_{ij} = q^{b_{ij}}$.  The powers of this $q$ from the matrix $\Lambda$ provide entries for an $n^2 \times n^2$ integer matrix $\Lambda'$ made up of $n \times n$ blocks 
\begin{equation*}
B'_{ii}= \begin{pmatrix} 0 &b_{21} &b_{31} &\cdots &b_{n1}\\ b_{12} &0 &b_{32} &\cdots &b_{n2}\\ b_{13} &b_{23} &0 &\cdots &b_{n3}\\ \vdots &\vdots &&\ddots &\vdots\\ b_{1n} &b_{2n} &b_{3n} &\cdots &0 \end{pmatrix} \text{  for all } i,
\end{equation*}
\begin{equation*}
B'_{ij}= \begin{pmatrix} b_{ij}-b &b_{ij}+b_{21} &b_{ij}+b_{31} &\cdots &b_{ij}+b_{n1}\\ b_{ij}+b_{12}-b &b_{ij}-b &b_{ij}+b_{32} &\cdots &b_{ij}+b_{n2}\\ b_{ij}+b_{13}-b &b_{ij}+b_{23}-b &b_{ij}-b &\cdots &b_{ij}+b_{n3}\\ \vdots &\vdots &&\ddots &\vdots\\ b_{ij}+b_{1n}-b &b_{ij}+b_{2n}-b &b_{ij}+b_{3n}-b &\cdots &b_{ij}-b \end{pmatrix}, \text{  for } i<j,\\
\end{equation*} 
\begin{equation*}
B'_{ij}= \begin{pmatrix} b_{ij}+b &b_{ij}+b_{21}+b &b_{ij}+b_{31}+b &\cdots &b_{ij}+b_{n1}+b\\ b_{ij}+b_{12} &b_{ij}+b &b_{ij}+b_{32}+b &\cdots &b_{ij}+b_{n2}+b\\ b_{ij}+b_{13} &b_{ij}+b_{23} & b_{ij}+b &\cdots &b_{ij}+b_{n3}+b\\ \vdots &\vdots &&\ddots &\vdots\\ b_{ij}+b_{1n} & b_{ij}+b_{2n} &b_{ij}+b_{3n} &\cdots &b_{ij}+b \end{pmatrix}, \text{  for } i>j.
\end{equation*}
Then ${\rm PIdeg}\, \mathcal O_{\lambda, {\boldsymbol p}} \big ( M_n (k) \big )$ can be calculated using $\Lambda'$ in Theorem \ref{dp} (2).

The single parameter quantized coordinate ring of $n \times n$ matrices, $\mathcal O_q(M_n(k))$, is defined over $k$ analogously to $\mathcal O_{\lambda, \boldsymbol p}(M_n(k))$, but with relations that are recovered by setting \linebreak[4]$\lambda = q^{-2}$ and $p_{ij} = q$ for all $i>j$.  When $k$ has characteristic zero and $q$ is a primitive $m^{th}$ root of unity {\em for m odd}, Jakobsen and Zhang found in \cite{J-Z} that \linebreak[4]${\rm PIdeg}\, \mathcal O_q(M_n(k)) = m^{\frac{n(n-1)}{2}}$ by using De Concini's and Procesi's tool given in Theorem \ref{dp}.  This result is reproved in \cite{J-J} using results of De Concini and Procesi and also J\o ndrup's work from \cite{jondrup}.  Now we can recover ${\rm PIdeg}\, \mathcal O_q(M_n(k)$ without the assumption that $k$ has characteristic zero.

In the single parameter case of $n \times n$ quantum matrices, the matrix that we use to calculate the PI degree is 

$$
\Lambda' = \begin{pmatrix} A_n &I_n &I_n &I_n &\cdots &I_n\\  -I_n &A_n &I_n &I_n &\cdots &I_n\\ -I_n&-I_n &A_n &I_n &\cdots &I_n\\ &&&\vdots\\ -I_n &-I_n &-I_n &-I_n &\cdots &A_n \end{pmatrix}
$$
where
$$
A_n= \begin{pmatrix} 0 &1 &1 &1 &\cdots &1\\ -1 &0 &1 &1 &\cdots &1\\ -1 &-1 &0 &1 &\cdots &1\\ &&&\vdots\\ -1 &-1 &-1 &\cdots &-1 &0 \end{pmatrix}
$$
is  $n \times n$ and $I_n$ is the $n\times n$ identity matrix.

For any $n$, the characteristic polynomial of $A_n$ is the sum of the terms of degree $\equiv n$ (mod 2) in the binomial expansion of $(x+1)^n$, so in fact $\chi_n(x) = \frac{1}{2} (x+1)^n + \frac{1}{2} (x-1)^n$.
 But there is also a recursion formula for the characteristic polynomial for $n \ge 3$ given by
\begin{equation*}
\chi_n(x) = \chi_{n-1}(x) (x+1) - (x-1)^{n-1},
\end{equation*}
which will be useful in the linear algebra that follows.

We will perform the following row reductions on the rows of blocks of $\Lambda'$.  For ease of notation, we'll denote the $j^{th}$ row of blocks as $BR_j$, the interchange of $BR_i$ and $BR_j$ as $BR_i\leftrightarrow BR_j$, and the addition of a multiple of $BR_i$ to $BR_j$ as $MBR_i + BR_j \mapsto BR_j$, where $M \in M_n(\mathbb Z)$.
\begin{itemize}
\item $BR_1\leftrightarrow BR_n$.
\item $-I_nBR_1 \mapsto BR_1$.
\item For $i = 2, \dotsc, n-1$, $BR_1 + BR_i \mapsto BR_i$.
\item $BR_n - A_nBR_1 \mapsto BR_n$. 
\end{itemize}
This yields the matrix

$$
\begin{pmatrix} I_n &I_n &I_n &I_n &\cdots &-A_n\\  0 &A_n + I_n &2I_n &2I_n &\cdots &I_n-A_n\\ 0&0 &A_n + I_n &2I_n &\cdots &I_n- A_n\\ \vdots&\vdots&&\ddots&&\vdots\\ 0&0&0&\cdots&A_n +I_n & I_n - A_n \\0 &I_n-A_n &I_n-A_n &I_n - A_n &\cdots  &I_n + A_n^2 \end{pmatrix}
$$
which can be reduced further by $n-2$ block row operations, each of which produces one zero block in the $n^{th}$ row.  We list the first three here along with the resulting $(n,n)$ block.
\begin{itemize}  
\item $(A_n + I_n)BR_n - (I_n - A_n)BR_2 \mapsto BR_n: \qquad A_n^3 + 3A_n$ 
\item $(A_n + I_n)BR_n + (I_n - A_n)^2 BR_3 \mapsto BR_n: \qquad A_n^4 + 6A_n^2 + I_n$
\item $(A_n + I_n)BR_n - (I_n - A_n)^3 BR_4 \mapsto BR_n: \qquad A_n^5 + 10A_n^3 + 5A_n$
\end{itemize}

In general, the block row operations that we need to perform in order to obtain a block upper triangular matrix are:
\begin{itemize}
\item For $i = 2, \dotsc, n-1$, $(A_n + I_n)BR_n + (-1)^{i-1}(I_n - A_n)^{i-1} BR_i \mapsto BR_n$.
\end{itemize}
These row operations are justified when $m$ is odd because $A_n + I_n$ is invertible in $M_n(\mathbb Z / m\mathbb Z)$ in that case, as will be shown below. 
After applying this step to the $i^{th}$ row, the $(n,n)$ block is $\chi_{i+1}(A_n)$.  So the resulting block upper triangular matrix is 

$$
\begin{pmatrix} I_n &I_n &I_n &I_n &\cdots &-A_n\\  0 &A_n + I_n &2I_n &2I_n &\cdots &I_n-A_n\\ 0&0 &A_n + I_n &2I_n &\cdots &I_n- A_n\\ \vdots&\vdots&&\ddots&&\vdots\\ 0&0&0&\cdots&A_n +I_n & I_n - A_n \\0 &0 &0 &0 &\cdots  &\chi_n(A_n) \end{pmatrix}
$$
where $\chi_n(A_n)$ is the $n \times n$ zero matrix.  Each block on the diagonal is 
$$
A_n+I_n = \begin{pmatrix} 1 &1 &1 &1 &\cdots &1\\ -1 &1 &1 &1 &\cdots &1\\ -1 &-1 &1 &1 &\cdots &1\\ &&&\vdots\\ -1 &-1 &-1 &-1&\cdots &1 \end{pmatrix}
$$
which can be row reduced just by adding row 1 to rows 2 through $n$ to yield the matrix
$$
\begin{pmatrix} 1 &1 &1 &1 &\cdots &1\\ 0 &2 &2 &2 &\cdots &2\\ 0 &0 &2 &2 &\cdots &2\\ &&&&\ddots\\ 0 &0 &0 &0&\cdots &2 \end{pmatrix}.
$$
In particular this shows that $A_n + I_n$ is invertible in $M_n(\mathbb Z / m\mathbb Z)$ for $m$ odd.
Hence $\Lambda'$ can be reduced through row operations to an upper triangular $n^2 \times n^2$ matrix with $2n-2$ ones, $(n-1)(n-2)$ twos, and $n$ zeroes on the diagonal.  Assuming that $q \in k$ is a primitive $m^{th}$ root of unity, and recalling Theorem \ref{dp}, the cardinality of the image in $(\mathbb Z / m \mathbb Z)^{n^2}$ is $m^{n^2 - n}$ if $m$ is odd.   Thus we conclude that ${\rm PIdeg}\,\mathcal O_q M_n(k) =  m^{\frac{n(n-1)}{2}}$, recovering the result of Jakobsen and Zhang \cite{J-Z} in characteristic zero.  By similar methods, one can show that  ${\rm PIdeg}\,\mathcal O_q M_n(k) =  m^{\frac{n(n-1)}{2}}/ 2^{\frac{(n-1)(n-2)}{2}}$ when $m$ is even.  For details on this result see \cite{J-Z} or \cite{haynal}.

\subsection{The algebra $K_{n, \Gamma}^{P, Q} (k)$, which generalizes the coordinate rings of even-dimensional quantum Euclidean space and quantum symplectic space}

For $P = (p_1, \dotsc, p_n)$ and $Q = (q_1, \dotsc, q_n)$ in $(k^{\times})^n$ with $p_i \ne q_i$ for all $i = 1, \dotsc , n$, and $\Gamma = (\gamma_{ij}) \in M_n(k^{\times})$ multiplicatively antisymmetric, the $k$-algebra $K_{n, \Gamma}^{P, Q} (k)$ introduced in \cite{horton} is defined by generators $x_i, y_i$ for $i = 1, \dotsc, n$ and relations

\begin{align*}  
y_i y_j & = \gamma_{ij} y_j y_i & \text{all } i, j\\
x_i x_j & = q_i p_j^{-1} \gamma_{ij} x_j x_i & i<j\\
x_i y_j & = p_j \gamma_{ji} y_j x_i  & i<j\\
x_i y_j & = q_j \gamma_{ji} y_j x_i & i>j\\
x_i y_i & = q_i y_i x_i + \sum_{\ell < i} (q_{\ell} - p_{\ell}) y_{\ell} x_{\ell} & \text{all } i.
\end{align*}

This algebra may be presented in the form of an iterated skew polynomial ring
\begin{equation*}
k[y_1][x_1; \tau_1][y_2; \sigma_2][x_2; \tau_2, \delta_2] \dotsb [y_n; \sigma_n][x_n; \tau_n, \delta_n]
\end{equation*}
where the automorphisms $\tau_i, \sigma_i$ and derivations $\delta_i$ are defined by
\begin{align*}
\sigma_i(y_j) & = \gamma_{ij} y_j & j<i\\
\sigma_i(x_j) & = p_i^{-1} \gamma_{ji} x_j & j<i\\
\tau_i (y_j) & = q_j \gamma_{ji} y_j & j<i\\
\tau_i (x_j) & = q_j^{-1} p_i \gamma_{ij} x_j & j<i\\
\tau_i (y_i) & = q_i y_i & \text{all } i\\
\delta_i (x_j) & = \delta_i (y_j) = 0 & j<i\\
\delta_i (y_i) & = \sum_{\ell < i} (q_{\ell} - p_{\ell}) y_{\ell} x_{\ell} & \text{all } i.
\end{align*}

Routine computations show that $\tau_i^{-1} \delta_i \tau_i (y_i) = q_i p_i^{-1} \delta_i (y_i)$ for all $i$, and so we conclude that each $(\tau_i, \delta_i)$ is a $q_i p_i^{-1}$-skew derivation.   For ease of notation we now shall let ${\bf k} = k[t_1^{\pm 1}, \dotsc, t_n^{\pm 1}, u_1^{\pm 1}, \dotsc, u_n^{\pm 1}]$ with $T = (t_1, \dotsc, t_n) \in {\bf k}$ and $U = (u_1, \dotsc, u_n) \in {\bf k}$.  We may present the ${\bf k}$-algebra $K_{n, \Gamma}^{T, U} ({\bf k})$ as an iterated skew polynomial ring   
\begin{equation*}
{\bf k}[y_1][x_1; \bar \tau_1][y_2; \bar \sigma_2][x_2; \bar \tau_2, \bar \delta_2] \dotsb [y_n;\bar \sigma_n][x_n; \bar \tau_n, \bar \delta_n]
\end{equation*}
where the automorphisms and derivations are defined analogously to those of $K_{n. \Gamma}^{P, Q} (k)$ with $t_i$ replacing $p_i$ and $u_i$ replacing $q_i$. Let $I \subseteq K_{n, \Gamma}^{T, U} ({\bf k})$ be the ideal generated by the $2n$ monomials $t_i - p_i, \, u_i - q_i$ for $i = 1, \dotsc, n$.  It is immediate that 
\begin{equation*}
K_{n, \Gamma}^{T, U} ({\bf k}) / I  \cong K_{n, \Gamma}^{P, Q} (k),
\end{equation*}
with each $\bar \tau_i, \, \bar \delta_i, \, \bar \sigma_i$ reducing to  $\tau_i, \, \delta_i, \, \sigma_i$ respectively. 

Let $A_j$ denote the subalgebra of $K_{n, \Gamma}^{T, U} ({\bf k})$ generated by $y_m, x_m$ for $m < j$ and $y_j$.  To show that $\bar \delta_j^i (A_j) \subseteq (i)!_{u_j t_j^{-1}} A_j$, it suffices to check that $\bar \delta_j^i (y_j)$ is an element of $(i)!_{u_j t_j^{-1}} A_j$ by Lemma \ref{containment}.  This is given for $i=1$ by the formula for $\bar \delta_j$ and is zero for $i>1$. 
So, by Theorem \ref{specialize}, each $\delta_i$ in our presentation of $K_{n, \Gamma}^{P, Q} (k)$ extends to an iterative, locally nilpotent h.$q_i p_i^{-1}$-s.$\tau_i$-d. on the appropriate subalgebra.  Then Corollary \ref{qaffine} gives
\begin{equation*}
{\rm Fract}\, K_{n, \Gamma}^{P, Q} (k) \cong {\rm Fract}\, \mathcal O_{\Lambda} (k^{2n})
\end{equation*}  
where the $2n \times 2n$ matrix of relations $\Lambda = (B_{ij})$ is comprised of $2 \times 2$ blocks
\begin{align*}
B_{ii} & = \begin{pmatrix} 1 &q_i^{-1}\\ q_i &1\end{pmatrix}, \text{ for all } i; \\
B_{ij} &= \begin{pmatrix} \gamma_{ij} &q_i^{-1} \gamma_{ji}\\ p_j \gamma_{ji} &q_i p_j^{-1} \gamma_{ij} \end{pmatrix}, \text{ for } i<j ;\\
B_{ij} &= \begin{pmatrix} \gamma_{ij} & p_i^{-1} \gamma_{ij}\\ q_j \gamma_{ji} &q_j^{-1}p_i \gamma_{ij} \end{pmatrix}, \text{ for } i>j.
\end{align*}

If the $q_i, p_i$ and $\gamma_i$ are all roots of unity, then $\mathcal O_{\Lambda} (k^{2n})$ is a PI algebra.  Suppose $q_i$ is an $r_i^{th}$ root of unity, $p_i$ is an $s_i^{th}$ root of unity, and $\gamma_{ij}$ is an $r_{ij}^{th}$ root of unity for all $i, j$.  Let $r = {\rm lcm}\{ r_i, s_i, \gamma_{ij} \mid i, j = 1, \dotsc, n\}$.  Then there extsis a primitive $r^{th}$ root of unity $q \in k$ and integers $b_i, c_i, b_{ij}$ such that $q_i = q^{b_i}$, $p_i = q^{c_i}$, and $\gamma_{ij} = q^{b_{ij}}$ for all $i, j$.  The powers of $q$ from the matrix $\Lambda$ provide the entries for an integer matrix $\Lambda'$ comprised of $2 \times 2$ blocks
\begin{align*}
B'_{ii} & = \begin{pmatrix} 0 &-b_i\\ b_i &0\end{pmatrix}, \text{ for all } i; \\
B'_{ij} &= \begin{pmatrix} b_{ij} &b_{ji} - b_i\\ b_{ji}+c_j &b_i + b_{ij} - c_j \end{pmatrix}, \text{ for } i<j ;\\
B'_{ij} &= \begin{pmatrix} b_{ij} & b_{ij} - c_i\\ b_{ji}+b_j &b_{ij} +c_i - b_j \end{pmatrix}, \text{ for } i>j.
\end{align*} 
Then ${\rm PIdeg}\, K_{n, \Gamma}^{P, Q} (k)$ can be calculated using $\Lambda'$ in Theorem \ref{dp} (2).

The coordinate ring of quantum Euclidean $2n$-space over $k$, $\mathcal O_q(\mathfrak{o}k^{2n})$, is formed by setting $q_i = 1, \, p_i = q^{-2}$ for all $i$, and $\gamma_{ij} = q^{-1}$ for $i<j$ in the parameters $Q,\,P$, and $\Gamma$ (see \cite {horton}, Example 2.6).  Then the integer matrix, $\Lambda'$, is
\begin{equation*}
\begin{pmatrix}
0 & 0 & -1 & 1 & -1 & 1 & \hdots & -1 & 1\\
0 & 0 & -1 & 1 & -1 & 1 & \hdots& -1 & 1\\
1 & 1 & 0 & 0 & -1 & 1 & & -1 & 1\\
-1 & -1 & 0 & 0 & -1 & 1 & & \vdots &\\
1 & 1 & 1 & 1 & 0 & 0 & & & \\
-1 & -1 & -1 & -1 & 0 & 0 & & & \\
\vdots & \vdots & \vdots  & \vdots & & & \ddots & & \\
1 & 1 & 1 & 1 & 1 & \hdots & & 0 & 0\\
-1 & -1 & -1 & -1 & -1 & \hdots & & 0 & 0
\end{pmatrix}.
\end{equation*}
We perform the following row reductions that preserve the size of the image of the \linebreak[4]homomorphism $\mathbb Z^{2n} \longrightarrow \mathbb Z^{2n}$ given by $\Lambda'$:
\begin{itemize}
\item For $j = 2n,\, 2n-1,\,2n-2,\dotsc,4$, replace row $j$ with $\text{row }j + \text{row }(j-1)$
\item Replace row 2 with $\text{row }2 - \text{row }1$
\item Replace the (new) row 5 with $\text{row } 5 + \text{row }1$
\item For $j = 4,\, 6,\, 8, \dotsc , 2n-4$, replace row $j$ with $\text{row }j + 2\text{row }(j+3)$
\item For $n \ge 4$, rearrange the rows to order $3, 1, 5, 7, 4, 9, 6, 11, \dotsc, 2i,\, 2i+5, \dotsc,\\ 2n-4,\, 2n-2,\, 2,\, 2n$.
\end{itemize}
The resulting matrix has the form
\begin{equation*}
\begin{pmatrix}
1 & 1 & & & & & & & & & \\
 & 0 & -1 & 1 & & & & & & &  \\
 &  & 0 & 2 & & & & &\boldsymbol{*} & & \\
 &  &  & 0 & 1 & 1 & & & & & \\
 &  &  &  & 0 & 4 & & & & & \\
 &  &  &  &  &  & \ddots &  & & & \\
 &  &  &  &  &  & 0 & 1 & 1 & &\\ 
 & & & {\bf 0} & & & & 0 & 4 & & \\
 &  &  &  &  & &  &  & 0 & -2 & 2\\
 &  &  & &  &  &  &  &  & 0 & 0\\
 &  &  & &  &  &  &  &  &  & 0
\end{pmatrix}.
\end{equation*}
When $n$ is even, the pivot in the third row does not divide all the entries in its row, so more elementary row and column operations are needed before it becomes clear that the matrix can be diagonalized.  By a method similar to that used in Example 5.1, suppressed here in the interest of saving space but listed explicitly in \cite{haynal}, we obtain the Smith normal form diag$(1,1,\dotsc,1,4,4,\dotsc,4,0,0)$ with $n$ ones and $n-2$ fours when $n$ is even; and diag$(1,1,\dotsc,1,2,2,4,4,\dotsc,4,0,0)$, with $n-1$ ones and $n-3$ fours when $n$ is odd.  Thus we have
\begin{equation}\label{evenqeuclidpi}
{\rm PIdeg}\, \mathcal O_q(\mathfrak{o}k^{2n}) = \begin{cases}
r^{n-1}, \qquad \qquad  &r \, \text{odd}\\
r^{n-1}/2^{\lfloor \frac{n-1}{2}\rfloor }, \quad &r \,\text{even} \notin 4\mathbb Z\\
r^{n-1}/2^{n-2}, \qquad  &r \in 4\mathbb Z
\end{cases}.
\end{equation}
The low-dimensional cases do not fit the same pattern, but the matrices for the cases $n=2$ and $n=3$ are readily transformed to 
\begin{equation*}
\begin{pmatrix}
1&1&0&0\\
0&0&-1&1\\
0&0&0&0\\
0&0&0&0
\end{pmatrix} \qquad \text{ and } \qquad
\begin{pmatrix}
1&1&0&0&-1&1\\
0&0&-1&1&-1&1\\
0&0&0&2&0&0\\
0&0&0&0&-2&2\\
0&0&0&0&0&0\\
0&0&0&0&0&0
\end{pmatrix}
\end{equation*}
respectively.    Therefore, formula (\ref{evenqeuclidpi}) holds for all $n \ge 2$.

As a specific case of $K_{n, \Gamma}^{P, Q} (k)$, quantum symplectic space $\mathcal O_q(\mathfrak{sp}(k^{2n}))$ is formed by setting $q_i=q^{-2}$ and $p_i=1$ for all $i$, and $\gamma_{ij}=q$ for $i<j$  (see \cite {horton}, Example 2.4).  With these parameters, the $2n \times 2n$ integer matrix $\Lambda'$ is 
\begin{equation*}
\begin{pmatrix}
0 & 2 & 1 & 1 & 1 & 1 & \hdots & 1 & 1\\
-2 & 0 & -1 & -1 & -1 & -1 & \hdots& -1 & -1\\
-1 & 1 & 0 & 2 & 1 & 1 & & 1 & 1\\
-1 & 1 & -2 & 0 & -1 & -1 & & -1 &-1\\
-1 & 1 & -1 & 1 & 0 & 2 & & \vdots & \\
-1 & 1 & -1 & 1 & -2 & 0 & &  & \\
\vdots & \vdots & \vdots  & \vdots & & & \ddots & & \\
-1 & 1 & -1 & 1 & -1 & 1 & \hdots & 0 & 2\\
-1 & 1 & -1 & 1 & -1 & 1 & \hdots & -2 & 0
\end{pmatrix}.
\end{equation*}
We perform the following row reductions that preserve the size of the image of the \linebreak[4]homomorphism $\mathbb Z^{2n} \longrightarrow \mathbb Z^{2n}$ given by $\Lambda'$:
\begin{itemize}
\item For $j = 2n, 2n-1,\dotsc, 4$, replace row j with $\text{row }j - \text{row }(j-1)$  
\item Replace row 2 with $-(\text{row }2 - 2\text{row }3 + \text{row }1)$
\item For $j=4,6,8,\dotsc,2n-2$, replace row j with $\text{row }j + 2\text{row }(j+1)$
\item For $n\ge3$, order the rows $3, 1, 5, 2, 7, 4, 9, \dots,2j, 2j+5, \dotsc, 2n-4, 2n, 2n-2$.
\end{itemize}
This yields a matrix whose image is more easily measured:
\begin{equation*}
\begin{pmatrix}
-1 & 1&&&&&&&&\\ 
0 & 2 &&&&&&&&\\
& 0 &1& 1&&&\boldsymbol{*}&& \\
&&0 &4& &&&&& \\
&&&& \ddots &&&&&\\
&&&&&1 & 1 &&&\\
&&\bf 0& &&0 & 4 &&&\\
&&&&&&&-2 & -2 \\ 
&&&&&&& 0 & 4 
\end{pmatrix}.
\end{equation*}
But the pivot in row 2 is problematic because it does not always divide the other entries in its row.  With further elementary row and column operations, full details of which can be found in \cite{haynal}, we can bring this matrix into Smith normal form diag$(1,1,\dotsc,1,4,4,\dotsc,4)$ with $n$ ones and $n$ fours when $n$ is even; or the form \linebreak[4]diag$(1,1,\dotsc,1,2,2,4,4,\dotsc,4)$ with $n-1$ ones, two twos, and $n-1$ fours when $n$ is odd.

For $n = 1, 2$, the row reduced matrices are, respectively, 
\begin{equation*}
\begin{pmatrix}
0& 2\\
-2 & 0
\end{pmatrix}
\qquad \text{ and } \qquad
\begin{pmatrix}
-1 & 1 & 0 & 2\\
0 & 1 & 1 & 1 \\
0 & 0 & -4 & -4\\
0 & 0 & 0 & -4
\end{pmatrix}.
\end{equation*}
Hence we have, for all $n$,
\begin{equation*}
{\rm PIdeg}\, \mathcal O_q(\mathfrak{sp}(k^{2n})) = \begin{cases}
r^n, \qquad \qquad  &r \text{ odd}\\
{r^n}/{2^{\lfloor \frac{n+1}{2} \rfloor  }},  \quad  &r \text{ even, }\, r\notin 4\mathbb Z\\
r^n/2^n, \qquad  \quad &r \in 4\mathbb Z
\end{cases}.
\end{equation*}

\section{Prime Factor Localizations}

In this section we present a structure theorem for completely prime factors of iterated skew polynomial rings analogous to the main theorem of section four.  Applying this result to the algebras studied in section five, we'd like to strengthen it to the form of the quantum Gel'fand-Kirillov conjecture.
Recall that the assumptions about skew polynomial rings from section one are still in effect.

\begin{theorem}\label{primefactor}
Let $A=R[x; \tau, \delta]$, where $R$ is noetherian and $\delta \tau = q \tau \delta$ for some $q \in k^{\times}$. Assume that $\delta$ extends to a locally nilpotent, iterative h.$q$-s.$\tau$-d., $\{ d_i \}$, on $R$.  Let $P \in \text{\rm{spec}}\, A$ be completely prime.  Then 

{\rm (1)} there exists a cyclic Ore set $S$ in $A/P$ such that $(A/P)S^{-1} \cong \big ( R[y; \tau]/Q \big ) Y^{-1}$ for some completely prime $Q \in \text{\rm{spec}}\, R[y; \tau]$ and cyclic Ore set $Y$,

{\rm (2)} ${\rm Fract}\,A/P \cong {\rm Fract}\,R[y;\tau]/Q$.
\end{theorem}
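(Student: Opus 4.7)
The plan is to split on whether $x \in P$ and, in each case, to construct the required cyclic Ore set $S$ in $A/P$ together with the completely prime $Q \subseteq R[y;\tau]$ and cyclic Ore set $Y$. Part (2) will follow from part (1) in both cases, since $S$ and $Y$ will be chosen to consist of regular elements, so the localizations have the same ring of fractions as $A/P$ and $R[y;\tau]/Q$ respectively.

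For the case $x \notin P$, let $S_0 = \{x^i\}$ be the Ore set of Lemma \ref{denomset} and set $S = \bar S_0$, its image in the domain $A/P$; this consists of regular elements and is Ore because $S_0 \cap P = \emptyset$, so $(A/P) S^{-1} \cong AS_0^{-1}/PS_0^{-1}$. Theorem \ref{summary} supplies an isomorphism $\phi: R[y^{\pm 1};\tau] \to AS_0^{-1}$ with $\phi(y) = x$. I will take $\widetilde Q = \phi^{-1}(PS_0^{-1})$, a completely prime ideal of $R[y^{\pm 1};\tau]$, and set $Q = \widetilde Q \cap R[y;\tau]$, which is completely prime in $R[y;\tau]$. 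Since $\phi(y) = x \notin P$, neither $\widetilde Q$ nor $Q$ contains $y$; letting $Y = \{y^i\}$ (Ore because $y$ is normal in $R[y;\tau]$) then yields
\[(R[y;\tau]/Q) \bar Y^{-1} \cong R[y^{\pm 1};\tau]/\widetilde Q \cong AS_0^{-1}/PS_0^{-1} \cong (A/P) S^{-1}.\]

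For the case $x \in P$, take $S = \{1\}$ and $Y = \{1\}$. The point is that any $a = \sum_i r_i x^i \in A$ satisfies $a - r_0 \in Ax \subseteq P$, so $A = R + P$, and hence $R \to A \to A/P$ is surjective with kernel $J := R \cap P$, giving a ring isomorphism $R/J \cong A/P$. Next, the substitution $\pi: R[y;\tau] \to R$, $\pi(y) = 0$, is a well-defined ring homomorphism because $yr = \tau(r) y$ collapses to $0 = 0$ under $\pi$. Setting $Q = \pi^{-1}(J)$ produces a two-sided ideal of $R[y;\tau]$ as the kernel of $R[y;\tau] \to R \to R/J$; it is completely prime because $R/J \cong A/P$ is a domain, and $R[y;\tau]/Q \cong R/J \cong A/P$.

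The main subtlety is in the second case. The naive attempt to define $Q$ as the two-sided ideal of $R[y;\tau]$ generated by $y$ together with $J$ requires $\tau(J) = J$, which is \emph{not} automatic for the mere contraction $J = R \cap P$ when $x \in P$. Defining $Q$ instead as the pullback of $J$ along $\pi$ sidesteps this hypothesis entirely, because kernels of ring homomorphisms are automatically two-sided. The first case, by contrast, is a direct bookkeeping consequence of Theorem \ref{summary} and the standard commuting of localization with quotients by primes disjoint from the Ore set.
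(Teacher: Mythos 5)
Your proof is correct and follows essentially the same case split (on $x\in P$) and the same use of Theorem~\ref{summary} as the paper's own argument, so at the structural level there is nothing to flag. In the case $x\notin P$ the chain $(A/P)S^{-1}\cong AS_0^{-1}/PS_0^{-1}\cong R[y^{\pm1};\tau]/\widetilde Q\cong (R[y;\tau]/Q)\bar Y^{-1}$ is exactly what the paper does, phrased through extension/contraction.

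One remark on the final paragraph, though: the ``subtlety'' you describe is not actually there. The set $J+\langle y\rangle$ is automatically a two-sided ideal of $R[y;\tau]$ without any $\tau$-stability hypothesis on $J$, precisely because $y$ is normal. Concretely, for $\alpha=\sum_m a_m y^m$ and $j\in J$ one has $\alpha j=a_0 j+\sum_{m\ge1}a_m y^m j$ with $a_0 j\in J$ and the remaining terms in $yR[y;\tau]=\langle y\rangle$; a symmetric computation handles $j\alpha$. Thus $\langle J,y\rangle=J+\langle y\rangle$ always, and this coincides with your $\pi^{-1}(J)$, since $\pi^{-1}(J)=\{\sum a_m y^m: a_0\in J\}=J+yR[y;\tau]$. (This is also what the paper writes, as $R[y;\tau]/(I+\langle y\rangle)$.) So your pullback description is a clean way to see why $Q$ is two-sided and completely prime, but it is not circumventing a genuine obstruction; both routes produce the same ideal, and neither requires $\tau(J)=J$.
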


\begin{proof}
The completely prime ideal $P$ naturally satisfies one of two cases: $x \in P$ or $x \notin P$.  If $x \in P$, then $xA \subseteq P$ and $Ax \subseteq P$. So the relation $xr = \tau(r)x + \delta(r)$ implies that $\delta(r) \in P$ for all $r \in R$.  Hence, there is a completely prime ideal $I \in R$ such that $A/P \cong R/I \cong R[y; \tau]/(I+ \langle y \rangle )$.  In this case, we can take $S =  Y = \{ 1 \}$ and localize.  If $x \notin P$, then $x^i \notin P$ for all $i \in \mathbb N \cup \{ 0 \}$ because $A/P$ is a domain.  Letting $S = \{1, x, x^2, \dotsc \}$, which is a known denominator set in $A$, we have $P \cap S = \varnothing$.  Since extension and contraction provide inverse bijections between the sets $\text{spec}\, AS^{-1}$ and \linebreak[4]$\{I \in \text{spec}\, A \mid I \cap S = \varnothing \}$, we know that $P^e \in \text{spec}\, AS^{-1}$.  From Theorem \ref{summary}, we have $AS^{-1} \cong R[y^{\pm 1}; \tau]$, a localization of $R[y; \tau]$.  So there is a completely prime ideal $\bar Q \triangleleft R[y^{\pm 1}; \tau]$ such that $AS^{-1}/ P^e \cong R[y^{\pm 1}; \tau] / \bar Q$.  Setting $Y = \{1, y, y^2, \dotsc, \}$, contraction to $R[y; \tau]$ gives a completely prime ideal $Q$, where $Q \cap Y = \varnothing$, such that $R[y^{\pm 1}; \tau] / \bar Q$ is isomorphic to $(R[y; \tau] / Q)Y^{-1}$.  The canonical projection $\pi : AS^{-1} \longrightarrow (A/P)S^{-1}$ gives $AS^{-1}/P^e \cong (A/P)S^{-1}$.  Thus $(A/P)S^{-1} \cong \big( R[y; \tau] / Q \big) Y^{-1}$.
\end{proof}

\begin{theorem}
Let $R$ be a noetherian $k$-algebra, and let 
\begin{equation*}
A= R[x_1, \tau_1, \delta_1] \dotsb [x_n; \tau_n, \delta_n]
\end{equation*}
be an iterated skew polynomial ring where, for $j<i$ and $\lambda_{ij} \in k^{\times}$,  $\tau_i(x_j) = \lambda_{ij}x_j$, and $\delta_i$ is a $q_i$-skew $\tau_i$-derivation, $q_i \ne 1$, which extends to a locally nilpotent, iterative h.$q_i$-s.$\tau_i$-d. $\{d_{i,p} \}_{p=0}^{\infty}$ on $R[x_1; \tau_1, \delta_1] \dotsb [x_{i-1}; \tau_{i-1}, \delta_{i-1}]$ for all $i$.  Let $A' = R[y_1; \tau_1'][y_2; \tau_2'] \dotsb [y_n; \tau_n']$ where $\tau'_i (y_j) = \lambda_{ij} y_j$ for all $i$ with $j<i$ and the same units $\lambda_{ij}$ as above.  Let $P$ be a completely prime ideal in $A$.  Then 

{\rm (1)} there exists a finitely generated Ore set $S_n$ in $A/P$ such that $(A/P)S_n^{-1}$ is isomorphic to $\big ( A'/ Q \big ) Y_n^{-1}$ for some completely prime ideal $Q \subseteq A'$ and finitely generated Ore set $Y_n$,

{\rm (2)}  ${\rm Fract}\,A/P \cong {\rm Fract}\,A'/Q$.
\end{theorem}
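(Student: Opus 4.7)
The plan is to prove part (1) by induction on $n$, with Theorem \ref{primefactor} serving as the base case $n=1$. Part (2) will follow routinely from part (1): both $A/P$ and $A'/Q$ are noetherian domains (being factors of noetherian rings by completely prime ideals), so each coincides, up to division ring of fractions, with any of its Ore localizations, and the isomorphism from part (1) descends accordingly.

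For the inductive step, I write $A = B[x_n;\tau_n,\delta_n]$ with $B = R[x_1;\tau_1,\delta_1]\dotsb[x_{n-1};\tau_{n-1},\delta_{n-1}]$, which is noetherian by the Hilbert basis theorem for skew polynomial rings. Applying Theorem \ref{primefactor} with coefficient ring $B$ to the completely prime ideal $P \triangleleft A$ produces a cyclic Ore set $S$ in $A/P$, a completely prime ideal $Q_0 \in \text{\rm spec}\,B[y_n;\tau_n]$, and a cyclic Ore set $Y_0$ in $B[y_n;\tau_n]/Q_0$, together with an isomorphism $(A/P)S^{-1}\cong\bigl(B[y_n;\tau_n]/Q_0\bigr)Y_0^{-1}$.

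Next, I invoke Lemma \ref{switchingB} to rewrite $B[y_n;\tau_n]$ as the iterated skew polynomial ring $R[y_n;\sigma^*][x_1;\tau_1',\delta_1']\dotsb[x_{n-1};\tau_{n-1}',\delta_{n-1}']$ over the noetherian base $R[y_n;\sigma^*]$, whose $\delta_i'$ remain $q_i$-skew and extend to locally nilpotent iterative higher $q_i$-skew $\tau_i'$-derivations. Viewing $Q_0$ as a completely prime ideal in this rearranged ring and applying the inductive hypothesis (which covers $n-1$ adjoined variables with derivations) yields a finitely generated Ore set $S'$ in $B[y_n;\tau_n]/Q_0$, a completely prime ideal $Q_1$ in the target ring $R[y_n;\sigma^*][y_1;\tau_1''] \dotsb [y_{n-1};\tau_{n-1}'']$, and a finitely generated Ore set $Y_1$ giving the corresponding isomorphism of localizations.

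To finish, I combine the two successive localizations on the $A/P$-side into a single finitely generated Ore set $S_n \subseteq A/P$ via Lemma \ref{localizeB}, and on the target side I reorder the skew polynomial ring $R[y_n;\sigma^*][y_1;\tau_1''] \dotsb [y_{n-1};\tau_{n-1}'']$ into the desired form $A' = R[y_1;\tau_1']\dotsb[y_n;\tau_n']$ by iterating the derivation-free special case of Lemma \ref{switchingB} (all the $y_j$ scalar commute, so this reordering is straightforward). The image of $Q_1$ under this reordering is the sought completely prime ideal $Q \triangleleft A'$, and the combined Ore sets on the target side yield $Y_n$. The main obstacle is the bookkeeping: tracking each completely prime ideal through the chain of ring isomorphisms and contractions---verifying in particular that the contractions remain completely prime so the inductive hypothesis applies---and checking that the scalar-commutation hypotheses required by Lemma \ref{localizeB} are met when merging the iterated Ore localizations arising from the two rounds of derivation removal.
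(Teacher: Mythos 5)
Your proposal is correct and follows essentially the same route as the paper: induct on $n$, apply Theorem~\ref{primefactor} to strip the last variable, use Lemma~\ref{switchingB} to move $y_n$ to the bottom and apply the inductive hypothesis over $R[y_n;\sigma^*]$, then merge the Ore localizations (Lemma~\ref{localizeB}) and reorder the derivation-free variables. The one cosmetic improvement you make is invoking Theorem~\ref{primefactor} uniformly rather than splitting into the cases $x_n \in P$ and $x_n \notin P$ as the paper does; since Theorem~\ref{primefactor} already covers both (with trivial Ore sets in the first case), this is equivalent.
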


\begin{proof}
The case $n=1$ has been established in Theorem \ref{primefactor}.  Suppose the result holds for the case $n-1$, and let $A_{n-1} = R[x_1, \tau_1, \delta_1] \dotsb [x_{n-1}; \tau_{n-1}, \delta_{n-1}] \subseteq A$.  Then we have $A = A_{n-1}[x_n; \tau_n ,\delta_n]$.    If $x_n \in P$, then as in Theorem \ref{primefactor} there is a completely prime ideal $I \subseteq A_{n-1}$ such that $A/P \cong A_{n-1}/I \cong A_{n-1} [y_n; \tau'_n] /(I + \langle y_n \rangle )$.  The induction hypothesis and Lemma \ref{switchingB} imply that $\big ( A_{n-1} [y_n; \tau'_n] /(I + \langle y_n \rangle ) \big )S^{-1} \cong \big ( A'/Q \big ) Y^{-1}$ for some finitely generated Ore sets $S$ and $Y$.  Hence there is a finitely generated Ore set $S_n$ in $A$ such that $(A/P)S_n^{-1} \cong (A'/Q)Y^{-1}$.  If $x_n \notin P$, let $S_n = \{1,\, x_n,\, x_n^2, \dotsc\,\} \subseteq A$ and $Y_n = \{1,\, y_n,\, y_n^2, \dotsc\, \} \subseteq A_{n-1}[y_n; \tau_n]$.  Then from the single-variable result, it follows that 
\begin{equation}\label{primefactloc}
\big ( A/P \big ) S_n^{-1} \cong \big ( A_{n-1}[y_n; \tau_n'] / \bar Q \big ) Y_n^{-1},
\end{equation}
for a completely prime ideal $\bar Q \subseteq A_{n-1}[y_n; \tau_n']$.  From Lemma \ref{switchingB}, we have 
\begin{equation*}
A_{n-1}[y_n; \tau_n'] = R[y_n; \tau_n'][x_1; \tau'_1, \delta'_1] \dotsb [x_{n-1}; \tau'_{n-1}, \delta'_{n-1}],
\end{equation*}
which is an iterated skew polynomial ring in $n-1$ variables over the coefficient ring $R[y_n; \tau_n']$ that satisfies the current assumptions.  So, we apply the induction hypothesis and rearrange variables to obtain 
\begin{align*}
\big ( A_{n-1}[y_n; \tau_n] / \bar Q \big ) Y_n^{-1} & \cong \big ( R[y_n; \tau_n'][y_1; \tau_1'] \dotsb [y_{n-1}; \tau_{n-1}'] / Q \big ) Z^{-1}\\
& \cong \big ( R[y_1; \tau_1'][y_2; \tau_2'] \dotsb [y_n; \tau_n'] / Q \big ) Z^{-1}
\end{align*}
for a completely prime ideal $Q \subseteq R[y_1; \tau_1'][y_1; \tau_1'] \dotsb [y_n; \tau_n']$ and a  denominator set\linebreak[4] $Z \subseteq R[y_1; \tau_1'][y_1; \tau_1'] \dotsb [y_n; \tau_n'] / Q$. This, along with isomorphism  (\ref{primefactloc}) gives the result.
\end{proof}

When $R$ is replaced by $k$, we have the following result.

\begin{corollary} 
Let $A = k[x_1, \tau_1, \delta_1] \dotsb [x_n; \tau_n, \delta_n]$, where $\tau_i(x_j) = \lambda_{ij}x_j$ and $\delta_i \tau_i =~q_i \tau_i \delta_i$, $q_i \ne 1$, for $\lambda_{ij}, \, q_i \in k^{\times}$ and all $i$ with $j < i$. Assume that each $\delta_i$ extends to a locally nilpotent, iterative h.$q_i$-s.$\tau_i$-d. $\{d_{i,m} \}_{m=0}^{\infty}$ on the subalgebra $k[x_1; \tau_1, \delta_1] \dotsb [x_{i-1}; \tau_{i-1}, \delta_{i-1}]$.  Let $P$ be a completely prime ideal in $A$ and set $\lambda_{ii} = 1$ and $\lambda_{ji} = \lambda_{ij}^{-1}$. Then for \linebreak[4]$\boldsymbol \lambda = (\lambda_{ij}) \in M_n(k)$, and an appropriate completely prime ideal $Q \subseteq \mathcal O_{\boldsymbol \lambda} (k^n)$, we have 
\begin{equation*}
\text{\rm Fract}\,A/P \cong \text{\rm Fract}\, \mathcal O_{\boldsymbol \lambda} (k^n) / Q.
\end{equation*}
\end{corollary}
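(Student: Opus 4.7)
The plan is to obtain this corollary as a direct specialization of the preceding theorem with the coefficient ring $R$ taken to be the base field $k$ itself. First I would observe that $k$ is trivially a noetherian $k$-algebra and that the hypotheses on the pairs $(\tau_i, \delta_i)$, the scalars $q_i$, and the higher $q_i$-skew $\tau_i$-derivations $\{d_{i,m}\}$ are identical in the corollary and in the theorem. The theorem therefore applies directly, producing a completely prime ideal $Q'$ in
\[
A' = k[y_1; \tau_1'][y_2; \tau_2'] \dotsb [y_n; \tau_n']
\]
together with an isomorphism ${\rm Fract}\, A/P \cong {\rm Fract}\, A'/Q'$, where $\tau_i'(y_j) = \lambda_{ij} y_j$ for $j < i$ and the same units $\lambda_{ij}$ as in the corollary.

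The only substantive step that remains is to identify $A'$ with the multiparameter quantum affine space $\mathcal O_{\boldsymbol \lambda}(k^n)$. Since each $\tau_i'$ is a $k$-algebra automorphism it restricts to the identity on $k$, so the defining relations of the iterated skew polynomial ring $A'$ reduce to $y_i y_j = \tau_i'(y_j) y_i = \lambda_{ij} y_j y_i$ for $j < i$. Using the conventions $\lambda_{ii} = 1$ and $\lambda_{ji} = \lambda_{ij}^{-1}$, these relations extend to all pairs $i, j$, reproducing exactly the defining presentation of $\mathcal O_{\boldsymbol \lambda}(k^n)$. The universal property of the quantum affine space then supplies a surjective $k$-algebra homomorphism $\mathcal O_{\boldsymbol \lambda}(k^n) \to A'$ sending the standard generators to $y_1, \dotsc, y_n$; since both algebras possess the same PBW-type $k$-basis of ordered monomials $\{ y_1^{a_1} \dotsb y_n^{a_n} : a_i \ge 0 \}$, this homomorphism is necessarily an isomorphism.

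Transporting $Q'$ across this isomorphism produces the desired completely prime ideal $Q \subseteq \mathcal O_{\boldsymbol \lambda}(k^n)$, and the composite chain of isomorphisms delivers ${\rm Fract}\, A/P \cong {\rm Fract}\, \mathcal O_{\boldsymbol \lambda}(k^n)/Q$. There is no serious obstacle here: the preceding theorem has already carried out the substantive work of iteratively applying the derivation-removing homomorphism at the level of prime factors, and the corollary is essentially a recognition statement that when $R = k$ the residual ``nice'' iterated skew polynomial ring $A'$ is precisely a quantum affine space over $k$ with the prescribed parameters.
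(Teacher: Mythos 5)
Your argument is correct and matches the route the paper intends: the corollary is stated as the specialization of the preceding theorem to $R = k$, and the only missing step is precisely the identification of $A' = k[y_1;\tau_1']\dotsb[y_n;\tau_n']$ with $\mathcal O_{\boldsymbol\lambda}(k^n)$ via the relations $y_iy_j = \lambda_{ij}y_jy_i$ together with the conventions $\lambda_{ii}=1$, $\lambda_{ji}=\lambda_{ij}^{-1}$. Your PBW-basis justification of that identification is a standard (and correct) way to make it precise, and transporting the completely prime ideal $Q'$ across that isomorphism finishes the argument exactly as the paper intends.
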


We summarize how this applies to the $k$-algebras of quantized coordinate type.

\begin{corollary}
Let $A$ be any of the examples discussed in sections {\rm 5.1 - 5.4}, and let $P$ be a completely prime ideal of $A$. Then there exist a positive integer $N$, a multiplicatively antisymmetric $N \times N$ matrix $\boldsymbol \lambda$ over $k$, and a completely prime ideal $Q \in \mathcal O_{\boldsymbol \lambda} (k^N)$ such that ${\rm Fract}\, A/P \cong {\rm Fract}\, \mathcal O_{\boldsymbol \lambda} (k^N)/Q$.
\end{corollary}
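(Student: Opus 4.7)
The plan is to reduce the statement directly to the preceding corollary. The essential observation is that in each of Sections 5.1--5.4, the algebra $A$ under discussion was presented as an iterated skew polynomial ring
\[
A = k[x_1;\tau_1,\delta_1][x_2;\tau_2,\delta_2]\dotsb[x_N;\tau_N,\delta_N]
\]
with $\tau_i(x_j) = \lambda_{ij}x_j$ for $j<i$ and suitable $\lambda_{ij}\in k^{\times}$, and for each $i$ a scalar $q_i\in k^{\times}$ with $q_i\neq 1$ such that $(\tau_i,\delta_i)$ is $q_i$-skew. Moreover, in each of those sections a companion $k[t^{\pm 1}]$-algebra (or, in Section 5.2, a $k[t_1^{\pm 1},\dotsc,t_n^{\pm 1}]$-algebra) was constructed, and Lemma \ref{containment} was invoked to verify the hypothesis $\bar\delta_i^{\,m}(\bar A_i)\subseteq (m)!_{t_i}\bar A_i$ of Theorem \ref{specialize}. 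Consequently each $\delta_i$ was shown to extend to a locally nilpotent, iterative h.$q_i$-s.$\tau_i$-derivation on the relevant subalgebra.

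With these verifications already in hand, the hypotheses of the immediately preceding corollary are met verbatim. Therefore the plan is simply to invoke that corollary: for each of the four families of examples, apply it with $N$ equal to the number of generators in the skew polynomial presentation and with $\boldsymbol\lambda = (\lambda_{ij})$ defined by $\lambda_{ii}=1$, $\lambda_{ji}=\lambda_{ij}^{-1}$. This immediately produces a completely prime $Q\subseteq \mathcal O_{\boldsymbol\lambda}(k^N)$ with $\text{Fract}\,A/P \cong \text{Fract}\,\mathcal O_{\boldsymbol\lambda}(k^N)/Q$.

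For bookkeeping I would record the values explicitly: $N=2n+1$ and $\boldsymbol\lambda$ equal to the matrix $B$ of Section 5.1 for $\mathcal O_q(\mathfrak o k^{2n+1})$; $N=2n$ and $\boldsymbol\lambda = \Lambda$ (the $2\times 2$ block matrix of Section 5.2) for $A_n^{Q,\Gamma}(k)$; $N=n^2$ and $\boldsymbol\lambda = \Lambda$ (the $n\times n$ block matrix of Section 5.3) for $\mathcal O_{\lambda,\boldsymbol p}(M_n(k))$; and $N=2n$ with $\boldsymbol\lambda$ the matrix from Section 5.4 for $K_{n,\Gamma}^{P,Q}(k)$. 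No genuine obstacle arises here: the work has all been done in the preceding corollary and in the case-by-case verifications of Sections 5.1--5.4. The only point requiring a moment's care is to confirm in each case that the hypothesis $q_i\neq 1$ holds, which is immediate from the defining parameters ($q^{-2}\neq 1$ provided $q^2\neq 1$, etc.), and that the appropriate $k[t^{\pm 1}]$-lift is torsion-free, which is clear because each lift is itself an iterated skew polynomial ring over a Laurent polynomial ring and hence a domain.
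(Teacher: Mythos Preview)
Your proposal is correct and matches the paper's approach exactly: the corollary is stated there without proof, as a direct summary of how the preceding corollary applies once the hypotheses have been verified case by case in Sections~5.1--5.4. The only small addendum worth making explicit is that for the generators adjoined with zero skew derivation (e.g., $w$ and the $y_i$ in Section~5.1), the relation $\delta_i\tau_i = q_i\tau_i\delta_i$ holds for any choice of $q_i\neq 1$, and the zero derivation trivially extends to a locally nilpotent iterative h.$q_i$-s.$\tau_i$-d., so these variables cause no difficulty.
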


To complete the question posed by the corollary, one might ask how far the quantum Gel'fand-Kirillov conjecture extends to prime factor algebras.  For instance:

\begin{question}\label{openques}
Find conditions under which we can conclude that for any positive integer $n$, multiplicatively antisymmetric matrix $\boldsymbol \lambda \in M_n(k^{\times})$, and completely prime ideal \linebreak[4]$Q \in {\rm spec}\, \mathcal O_{\boldsymbol \lambda} (k^n)$, we have
\begin{equation*}
{\rm Fract}\, \mathcal O_{\boldsymbol \lambda} (k^n)/Q \cong {\rm Fract}\, \mathcal O_{\boldsymbol p} (K^m)
\end{equation*}
for some field extension $K\supseteq k$, integer $m \le n$, and $m \times m$ matrix $\boldsymbol p$ over $K$.
\end{question}

The case $n=1$ is trivial.  When $n=2$ and $Q$ contains $x_1$ or $x_2$, then ${\rm Fract}\, \mathcal O_{\boldsymbol \lambda}(k^2)/Q$ is isomorphic either to ${\rm Fract}\, \mathcal O_{\boldsymbol p}(k(y))$ where $\boldsymbol p = (1)$, or to $k$ itself.  In fact, for any $n$, if $Q$ is generated by a subset $S$ of $\{x_1, \dotsc, x_n\}$, then the result holds, with $\boldsymbol p$ the submatrix of $\boldsymbol \lambda$ formed by deleting the $i^{th}$ row and column for $x_i \in S$, and $K = k$.  When $x_i \notin Q$ for all $i$, answering the question fully will likely require different methods depending on the presence of roots of unity among the $\lambda_{ij}$.  A positive answer in the generic case has been provided in the proof of \cite[Theorem~2.1]{primefac}:

\begin{theorem}
{\rm [Goodearl - Letzter]} Let k be a field, $\boldsymbol \lambda = (\lambda_{ij})$ a multiplicatively antisymmetric $n \times n$ matrix over $k^{\times}$, and $\boldsymbol \Lambda$ the subgroup of $k^{\times}$ generated by the $\lambda_{ij}$.  If $\boldsymbol \Lambda$ is torsionfree, then all of the prime ideals $Q$ of $\mathcal O_{\boldsymbol \lambda}(k^n)$ are completely prime.
\end{theorem}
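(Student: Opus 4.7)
The plan is to induct on $n$, noting that the torsion-free hypothesis is inherited by every principal submatrix of $\boldsymbol \lambda$. The base case $n \le 1$ is immediate since $\mathcal O_{\boldsymbol \lambda}(k^n)$ is commutative. For the inductive step, let $Q \in \operatorname{spec} A$ with $A = \mathcal O_{\boldsymbol \lambda}(k^n)$; I aim to show that $A/Q$ is a domain, and I split into two cases.

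If some generator $x_i$ belongs to $Q$, then $\langle x_i\rangle \subseteq Q$ and $A/\langle x_i\rangle \cong \mathcal O_{\boldsymbol \lambda'}(k^{n-1})$ for the principal submatrix $\boldsymbol \lambda'$ obtained by deleting row $i$ and column $i$; the subgroup generated by the entries of $\boldsymbol \lambda'$ is still torsion-free, so by induction $Q/\langle x_i\rangle$ is completely prime and hence $A/Q$ is a domain. Otherwise, every $x_i$ has nonzero image in the prime ring $A/Q$, so the two-sided Ore set $S$ generated by the normal elements $x_1, \dotsc, x_n$ is disjoint from $Q$, and $A/Q$ embeds into the localization $AS^{-1}/Q^e$. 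It suffices, therefore, to show that every prime of the quantum torus $T = AS^{-1} = \mathcal O_{\boldsymbol \lambda}\bigl((k^\times)^n\bigr)$ is completely prime.

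For this final step I would appeal to the structure theory of quantum tori over a field with torsion-free commutation group. Grade $T$ by $\mathbb Z^n$ with $\deg x_i = e_i$; the commutation $\mathbf x^a \mathbf x^b = \gamma(a,b)\, \mathbf x^b \mathbf x^a$ defines a bihomomorphism $\gamma\colon \mathbb Z^n \times \mathbb Z^n \to \boldsymbol \Lambda$ whose left radical $\Gamma_0 = \{a : \gamma(a,b) = 1 \text{ for all } b\}$ is a direct summand of $\mathbb Z^n$, because $\mathbb Z^n/\Gamma_0$ embeds into $\mathrm{Hom}(\mathbb Z^n, \boldsymbol \Lambda) \cong \boldsymbol \Lambda^n$ and so is torsion-free. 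The center $Z(T) = k[\mathbf x^a : a \in \Gamma_0]$ is a commutative Laurent polynomial ring, and the McConnell--Pettit theorem yields a bijection $\mathfrak p \mapsto T\mathfrak p$ between $\operatorname{spec} Z(T)$ and $\operatorname{spec} T$. For each prime $\mathfrak p$ of $Z(T)$, the quotient $T/T\mathfrak p$ is free over the domain $Z(T)/\mathfrak p$ with basis indexed by $\mathbb Z^n/\Gamma_0$ and with homogeneous pieces that multiply via units; since $\mathbb Z^n/\Gamma_0$ is a torsion-free abelian group, hence orderable, a standard top-degree argument shows $T/T\mathfrak p$ is a domain. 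Applying this to $\mathfrak p = \tilde Q \cap Z(T)$ gives $T/\tilde Q = T/T\mathfrak p$ a domain, completing the induction.

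The main obstacle is precisely the invocation of McConnell--Pettit: the bijection between primes of $T$ and primes of its center rests on the torsion-free hypothesis in an essential way, since otherwise $\Gamma_0$ need not be a direct summand, $Z(T)$ need not be a domain, and the central-fiber quotients can fail to be domains even when $T$ itself is prime.
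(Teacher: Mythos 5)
The paper does not prove this theorem; it states it as a citation of Goodearl--Letzter \cite{primefac}, Theorem~2.1, and then merely sketches the shape of their argument (``they showed that ${\rm Fract}\,\mathcal O_{\boldsymbol\lambda}(k^n)/Q \cong {\rm Fract}\,\mathcal O_{\boldsymbol p}(K^m)$\ldots identified $K$ as the quotient field of a commutative domain embedded in the center of $\mathcal O_{\boldsymbol\lambda}((k^\times)^n)/Q'$''). Your reconstruction follows exactly that outline: reduce to the case $x_i\notin Q$ for all $i$ by modding out and inducting on $n$, localize to the quantum torus $T$, and then use the centrally-generated structure of ideals of $T$ together with a top-degree argument over the torsion-free grading group $\mathbb Z^n/\Gamma_0$. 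The core steps are correct: $S\cap Q=\varnothing$ follows from primeness of $Q$ and normality of the $x_i$ (if a monomial lies in $Q$, then some $x_i$ does); $\mathbb Z^n/\Gamma_0$ embeds in $\boldsymbol\Lambda^n$, hence is free, hence $\Gamma_0$ splits off and $Z(T)$ is a Laurent polynomial ring; and the $\mathbb Z^n/\Gamma_0$-grading of $T/T\mathfrak p$ with unit structure constants over the domain $Z(T)/\mathfrak p$ gives a domain by the leading-term argument, since $\mathbb Z^n/\Gamma_0$ is orderable.

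Two small inaccuracies in your closing paragraph. First, $Z(T)$ is \emph{always} a domain, being a subring of the domain $T$; that is not what fails when $\boldsymbol\Lambda$ has torsion. Second, the bijection $\mathfrak p \mapsto T\mathfrak p$ between $\operatorname{spec} Z(T)$ and $\operatorname{spec} T$ actually holds for an arbitrary quantum torus, with or without torsion (every ideal of $T$ is generated by its intersection with the center). What genuinely requires the torsion-free hypothesis is the \emph{last} step: when $\mathbb Z^n/\Gamma_0$ has torsion it is not orderable, $\Gamma_0$ need not be a direct summand, and $T/T\mathfrak p$ can be a matrix algebra or other non-domain even though it is still prime. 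So the obstacle is in the complete-primeness of the fibers, not in the existence of the central bijection. This does not affect the correctness of the body of your argument, since there you only invoke the torsion-free hypothesis at the point where it is actually needed.
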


In their proof, they showed that ${\rm Fract}\, \mathcal O_{\boldsymbol \lambda} (k^n)/Q \cong {\rm Fract}\, \mathcal O_{\boldsymbol p} (K^m)$, and identified $K$ as the quotient field of a commutative domain embedded in the center of $\mathcal O_{\boldsymbol \lambda} ((k^{\times})^n)/Q'$, where $Q'$ is the prime ideal in $\mathcal O_{\boldsymbol \lambda} ((k^{\times})^n)$ induced by localization.

Quantum affine space is included in a class called {\em quantum solvable algebras} by A.~N.~Panov.  The main theorem of \cite[Section 3]{panov2}, states that when the group generated by the $\lambda_{ij}$ is torsionfree, then ${\rm Fract}\, \mathcal O_{\boldsymbol \lambda} (k^n)/Q $ is isomorphic to the quotient division ring of a quantum torus.  The main theorem of \cite[Section 3]{panov3}, allows roots of unity and states that when $Q$ satisfies the extra condition of being stable under a certain set of derivations, then ${\rm Fract}\, \mathcal O_{\boldsymbol \lambda} (k^n)/Q $ is isomorphic to the quotient division ring of a quantum torus.  Cauchon's work may also be specialized to apply to quantum affine space when the group generated by the $\lambda_{ij}$ is torsionfree.  The result of \cite[Theorem~6.1.1]{cau}, indicates that ${\rm Fract}\, \mathcal O_{\boldsymbol \lambda} (k^n)/Q $ is isomorphic to ${\rm Fract}\, \mathcal O_{\boldsymbol p} (K^m)$ which specializes to this result.  But the division ring of real quaternions provides an example showing that Question \ref{openques} needs to have some conditions imposed.  Note that
\begin{equation*}
\mathbb H \cong \mathcal O_{\boldsymbol \lambda} (\mathbb R^3)/Q, \: \text{where} \:
\boldsymbol \lambda = \begin{pmatrix}
1 & -1 & -1\\
-1 & 1 & -1\\
-1 & -1 & 1
\end{pmatrix}, \: \text{and} \: Q = \langle x_1^2 +1, \, x_2^2 +1,\,  x_3^2 +1 \rangle.
\end{equation*}
Therefore, we cannot obtain the desired isomorphism of quotient division rings in this case, illustrating the necessity of an extra condition such as the one imposed by Panov in \cite{panov3}.

\section*{acknowledgments}
The author thanks her dissertation advisor, Ken Goodearl, for his direction that was so freely given in many inspiring discussions.


\begin{thebibliography}{30}

\bibitem{Alev} \textsc{J. Alev and F. Dumas}, {\em Sur le corps de fractions de certaines alg$\grave{e}$bres quantiques}, J. Algebra {\bf 170} (1994), 229-265

\bibitem{ast} \textsc{M. Artin, W. Schelter, and J. Tate}, {\em Quantum deformations of $GL_n$}, Comm. Pure Appl. Math {\bf 44} (1991), 879-895

\bibitem{Bourbaki} {\sc N. Bourbaki}, {\em $\acute E$l$\acute e$ments de math$\acute e$matique, Livre II, Alg$\grave e$bre, Chapitre 9, Formes sesquilin$\acute e$aires et formes quadratiques}, Hermann, Paris, 1959

\bibitem{barcelona} {\sc K. A. Brown and K. R. Goodearl}, {\em Lectures on Algebraic Quantum Groups}, Birkh\"auser Verlag, Basel - Boston, 2002

\bibitem{cau} {\sc G. Cauchon}, {\em Effacement des d$\acute{e}$rivations et spectres premiers des alg$\grave{e}$bres quantiques}, J. Algebra {\bf 260} (2003), 476-518

\bibitem{cau2} {\sc G. Cauchon}, {\em Spectre premier de $\mathcal O_q(M_n(k))$ image canonique et s$\acute {e}$paration normale}, J. Algebra {\bf 260} (2003), 519-569

\bibitem{cliff} {\sc G. Cliff}, {\em The division ring of quotients of the coordinate ring of the quantum general linear group}, J. London Math. Soc. (2) {\bf 51} (1995), 503-513

\bibitem{DP} {\sc C. De Concini and C. Procesi}, {\em Quantum Groups}, in D-modules Representation Theory, and Quantum Groups (Venezia, June 1992) (G. Zampieri and A. D'Agnolo, eds.), Lecture Notes in Math. 1565, Springer-Verlag, Berlin, 1993, 31-140

\bibitem{uniranks} {\sc K. R. Goodearl}, {\em Uniform ranks of prime factors of skew polynomial rings}, in Ring Theory, Proc. Biennial Ohio State - Denison Conf., 1992 (S. K. Jain and S. T. Rizvi, eds.), World Scientific, Singapore, 1993, 182-199

\bibitem{primesprqwa} {\sc K. R. Goodearl}, {\em Prime ideals in Skew polynomial rings and quantized Weyl algebras}, Trans. Amer. Math. Soc. {\bf 352} (2000), 1381-1403

\bibitem{primespec} {\sc K. R. Goodearl}, {\em Prime spectra of quantized coordinate rings}, in Interactions between Ring Theory and Representations of Algebras (Murcia 1998) (F. Van Oystaeyen and M. Saor\'in, eds.), Dekker, New York, 2000, pp. 205-237

\bibitem{catenarity} {\sc K. R. Goodearl and T. H. Lenagan}, {\em Catenarity in quantum algebras}, J. Pure and Appl. Algebra {\bf 111} (1996), 123-142

\bibitem{primefac} {\sc K. R. Goodearl and E. S. Letzter}, {\em Prime factor algebras of the coordinate ring of quantum matrices}, Proc. Amer. Math. Soc. {\bf 121} (1994), 1017-1025

\bibitem{memoirs} {\sc K. R. Goodearl and E. S. Letzter}, {\em Prime ideals in skew and q-skew polynomial rings}, Mem. Amer. Math. Soc. {\bf 521} (1994)

\bibitem{DMequiv} {\sc K. R. Goodearl and E. S. Letzter}, {\em The Dixmier-Moeglin equivalence in quantum coordinate rings and quantized Weyl Algebras}, Trans. Amer. Math. Soc. {\bf 352} (2000), 1381-1403

\bibitem{bluebook} {\sc K. R. Goodearl and R. B. Warfield, Jr.}, {\em An Introduction to Noncommutative Noetherian Rings, 2nd ed.}, Cambridge Univ. Press, Cambridge, 2004

\bibitem{haynal} {\sc H. A. Haynal}, {\em Pi degree parity in $q$-skew polynomial rings}, Ph.D. Thesis, to appear, (2007) University of California, Santa Barbara

\bibitem{horton} {\sc K. L. Horton}, {\em The prime and primitive spectra of multiparameter quantum symplectic and euclidean spaces}, Comm. Algebra {\bf 31} (10) (2003), 4713-4743

\bibitem{J-J} {H. P. Jakobsen and S. J\o ndrup}, {\em Quantized rank r matrices}, J. Algebra {\bf 246} (2001), 70-96,   arXiv:math.QA/9902133 v3, 23 May 2001

\bibitem{J-Z} {\sc H. P. Jakobsen and H. Zhang}, {\em The center of the quantized matrix algebra}, J. Albegra {\bf 196} (1997), 458-474

\bibitem{jondrup} {\sc S. J\o ndrup}, {\em Representations of skew polynomial algebras}, Proc. Amer. Math Soc. {\bf 128} (2000), 1301-1305

\bibitem{jondrup2} {\sc S. J\o ndrup}, {\em Representations of some PI algebras}, Comm. Algebra {\bf 31} (6) (2003), 2587-2602

\bibitem{jordan} {\sc D. A. Jordan}, {\em A simple localization of the quantized Weyl algebra}, J. Algebra {\bf 174} (1995), 267-281

\bibitem{lam} {\sc T. Y. Lam}, {\em Lectures on Modules and Rings}, Springer, New York, 1999

\bibitem{malm} {\sc D. R. Malm}, {\em Simplicity of partial and Schmidt differential operator rings}, Pacific J. Math. {\bf 132} (1998), no. 1, 85-112 

\bibitem{malt} {\sc G. Maltsiniotis}, {\em Calcul diff$\acute{e}$rentiel quantique}, Groupe de travail, Universit$\acute{e}$ Paris VII (1992)

\bibitem{brownbook} {\sc J. C. McConnell and J. C. Robson}, {\em Noncommutative Noetherian Rings}, Wiley-Interscience, Chichester - New York, 1987

\bibitem{mospan} {\sc V. G. Mosin and A. N. Panov}, {\em Division rings of quotients and central elements of multiparameter quantizations}, Sbornik: Mathematics {\bf 187}:6 (1996), 835-855

\bibitem{musson} {\sc I. M. Musson}, {\em Ring theoretic properties of the coordinate rings of quantum symplectic and Euclidean space}, in Ring Theory, Proc. Biennial Ohio State-Denison Conf., 1992 (S.K. Jain and S.T. Rizvi, eds.), World Scientific, Singapore, 1993, 248-258

\bibitem{Newman} {\sc M. Newman}, {Integral Matrices}, Academic Press, 1972

\bibitem{oh} {\sc S. Q. Oh}, {\em Catenarity in a class of iterated skew polynomial rings}, Comm. Algebra {\bf 25} (1) (1997), 37-49

\bibitem{panskew} {\sc A. N. Panov}, {\em Skew fields of twisted rational functions and the skew field of rational functions on $GL_q(n,K)$}, St. Petersburg Math J. {\bf 7} (1) (1996), 129-143 

\bibitem{panov1} {\sc A. Panov}, {\em Fields of fractions of quantum solvable algebras}, J. Algebra {\bf 236} (2001), 110-121

\bibitem{panov2} {\sc A. Panov}, {\em Stratification of prime spectrum of quantum solvable algebras}, Comm. Algebra {\bf 29}(9) (2001), 3801-3827

\bibitem{panov3} {\sc A. Panov}, {\em Quantum solvable algebras. Ideals and representations at roots of 1}, Transformation Groups {\bf 7}, no. 4, (2002) 379-402

\bibitem{RTF} {\sc N. Yu. Reshetikhin, L. A. Takhtadzhyan, and L. D. Fadeev}, {\em Quantization of Lie Groups and Lie Algebras}, Leningrad Math J. {\bf 1} (1990), 193-225

\bibitem{rowen} {\sc L. H. Rowen}, {\em Ring Theory}, Volumes I and II, Academic Press, Boston, 1988

\bibitem{smith} {\sc S. P. Smith}, {\em Quantum groups: An introduction and survey for ring theorists}, in Noncommutative Rings (S. Montgomery and L. W. Small, eds.), pp131-178, MSRI Publ. 24, Springer-Verlag, Berlin (1992)

\bibitem{stanley} {\sc R. P. Stanley}, {\em Enumerative Combinatorics}, Vol. I, Wadsworth \& Brooks/Cole, Monterey, CA, 1986

\end{thebibliography}
\end{document}